\numberwithin{equation}{section}
\newtheorem{Theorem}{Theorem}[section]
\newtheorem{Lemma}[Theorem]{Lemma}
\newtheorem{Corollary}[Theorem]{Corollary}
\newtheorem{Definition}[Theorem]{Definition}
\newtheorem{Remark}[Theorem]{Remark}
\title[Existence and nonexistence]{Existence and nonexistence of viscosity solutions for a class of degenerate/singular eigenvalue type equations}
\author[Mengni Li]{}
\address{Southeast University}
\email{krisymengni@gmail.com}
\author[You Li]{}
\address{Xiangtan University}
\email{thu3141@126.com}
\thanks{This work is supported in part by National Natural Science Foundation of China (Nos. 12301263, 12201107 and 12141103) and Natural Science Foundation of Beijing Municipality (No. 1212002). }
\begin{document}

\maketitle 

\centerline{Mengni Li and You Li\footnote{Corresponding author: You Li}}

\begin{abstract}
	
This paper is devoted to a complete classification on the existence and nonexistence results of viscosity solutions to the general Dirichlet problem for a class of  eigenvalue type equations. With the distance function included in the right-hand side, this type of equations 
can be degenerate and (or) singular near the boundary of uniformly convex domains. One highlight is that all cases related to the exponent of the distance function are investigated. Moreover, when viscosity solutions exist, we derive a series of global estimates based on the distance function.  The key ingredients of this paper include adaptions  of the Perron method and comparison principle as well as  constructions of suitable classical sub-solutions and  super-solutions. 

\noindent{\bf \\ Running head: } Existence and nonexistence
\noindent{\bf \\ Key words and phrases: } Dirichlet problem, nonzero boundary condition, existence, nonexistence, global $\log$-type estimate
\noindent{\bf \\ 2020 Mathematics Subject Classification: } 35D40, 35A01,   35J70, 35J75, 35P15 
\end{abstract}
		
	 \tableofcontents
	  
\section{Introduction}\label{section: introduction}

In this paper, we study the existence and nonexistence of viscosity solutions to the Dirichlet problem: 
\begin{align*}
F\left(\lambda_1(D^2 u),\cdots,\lambda_n(D^2 u)\right)&=g(x)(d(x))^{\alpha}\text{\hspace{0.2cm} in } \Omega,\stepcounter{equation}\tag{\theequation}\label{equation}\\
u&=\varphi\text{\hspace{1.7cm} on } \partial\Omega,\stepcounter{equation}\tag{\theequation}\label{bcondition}
\end{align*}
where $\Omega\subseteq \mathbb{R}^n(n \geq 2)$ refers to a convex domain having non-empty boundary $\partial\Omega$; $u:\Omega\to\mathbb{R}$ is a convex function; $\lambda_1(D^2u),\cdots,\lambda_n(D^2u)$ represent all the eigenvalues of the Hessian 
$D^2u$; $F$ denotes a given operator; $g$ and $\varphi$ are given functions; and $d(x)$ indicates the distance from the point $x$ to the boundary $\partial\Omega$:  $$d(x)=\operatorname{dist}(x,\partial\Omega).$$

\subsection{Assumptions, eigenvalue analysis, and examples}

The following four sets of assumptions are made regarding the domain $\Omega$,  operator $F$,  right-hand-side coefficient function $g$, and boundary function $\varphi$, respectively:
\begin{enumerate}
	\item[$(\mathbf{A_1})$]  $\Omega$ is a   uniformly convex domain of class $C^2$, where the uniform convexity of a domain is a concept from differential geometry, which states that all  principal curvatures  of the boundary  $\partial\Omega$, denoted by $\kappa_1,\cdots,\kappa_{n-1}$, are bounded away from zero.  
	\item[$(\mathbf{A_2})$]  $F(\lambda_1,\cdots,\lambda_n)\in C(\mathbb{R}^n)$;  $F(\lambda_1,\cdots,\lambda_n)$ satisfies the semi-elliptic condition, that is,
	\begin{equation}\label{semi-elliptic}
		F(\lambda_1(A),\cdots \lambda_n(A))\geq 	F(\lambda_1(B),\cdots \lambda_n(B))
	\end{equation}
	for any 
	$n$-order  
	positive semi-definite 
	matrices $A$ and $B$ with $A\geq B$ (this symbol means that $A-B$ is positive semi-definite); 
	and  either one of the following two boundedness conditions holds: 
	\begin{enumerate}
		\item[(i)] there exist
		some constants $\Lambda_0>0$, $a\geq 0$ and $b> 0$ such that
		\begin{equation*}
			F\left(\lambda_1(D^2 u),\cdots,\lambda_n(D^2 u)\right)\geq \Lambda_0(\lambda_{\min}(D^2 u))^{a}(\lambda_{\max}(D^2 u))^{b},
		\end{equation*}
		\item[(ii)] there exist
		some constants $\Lambda_0>0$, $a\geq 0$ and $b> 0$ such that
		\begin{equation*}
			F\left(\lambda_1(D^2 u),\cdots,\lambda_n(D^2 u)\right)\leq \Lambda_0(\lambda_{\min}(D^2 u))^{a}(\lambda_{\max}(D^2 u))^{b},
		\end{equation*}
	\end{enumerate}
	where $\lambda_{\min}(D^2 u):=\min\{\lambda_1(D^2 u),\cdots,\lambda_n(D^2 u)\}$ and
	$\lambda_{\max}(D^2 u):=\max\{\lambda_1(D^2 u),\cdots,\lambda_n(D^2 u)\}$. 
	\item[$(\mathbf{A_3})$] $g(x)\in C(\overline{\Omega})$; and there exist
	some constants $0<m_0\leq M_0$ such that
	\begin{equation*}
		m_0\leq g(x)\leq M_0,\ \ \forall x\in\overline{\Omega}.
	\end{equation*}
	\item[$(\mathbf{A_4})$] $\varphi(x)\in C(\partial\Omega)$ is a  function defined on $\partial\Omega$; and $\varphi$ has at least one continuous convex extension to $\overline{\Omega}$, implying that there exists a convex function $\varphi_*\in C(\overline{\Omega})$ such that $\varphi_*\big|_{\partial\Omega}=\varphi$.
\end{enumerate}

Under these assumptions,  \eqref{equation} can describe a class of fully nonlinear elliptic equations, and 
the right-hand side of   \eqref{equation} can be degenerate or singular on the boundary $\partial\Omega$. 
It is evident from the assumption $(\mathbf{A_3})$ that if $\alpha>0$,  then $g(x)(d(x))^{\alpha}\to 0$ as $x\to\partial\Omega$, indicating the degenerate case; if $\alpha<0$, then $g(x)(d(x))^{\alpha}\to +\infty$ as $x\to\partial\Omega$, reflecting the singular case.

For the equation \eqref{equation} with a  degenerate or singular right-hand side at the boundary, 
the tangential and normal eigenvalues of $D^2u$ might exhibit degeneracy or singularity on the boundary at different speeds.  In fact,  we have the following eigenvalue analysis on the boundary, given that  $\partial\Omega$ is uniformly convex.  On one hand,  when the right-hand side of \eqref{equation} degenerates, it follows that the normal eigenvalues also degenerate,  whereas the tangential eigenvalues typically do not. In this case, all tangential eigenvalues are  larger than all normal eigenvalues as  the boundary is approached. On the other hand, when the right-hand side of \eqref{equation} becomes singular, it holds that the normal eigenvalues are definitely singular, while the tangential eigenvalues may be singular (if the right-hand side exhibits strong singularity) or non-singular (if the right-hand side has weak singularity). In this scenario, all normal eigenvalues are larger than all tangential eigenvalues near the boundary. Generally speaking,  
the eigenvalues of $D^2u$  always can be split into two groups with distinct orders of speed:  $l$ small
 and $n-l$ large  for some
 $l\in\{1,\cdots,n-1\}$, i.e.
 \begin{equation}\label{eq:eigen-1} 
 	\lambda_1(D^2u)\sim\cdots\sim\lambda_l(D^2u)\ll\lambda_{l+1}(D^2u)\sim\cdots\sim\lambda_n(D^2u),
 \end{equation}
 and then we have 
\begin{equation}\label{eq:eigen-2} 
	\{\lambda_1(D^2u),\cdots,\lambda_n(D^2u)\}\sim\{\underbrace{\lambda_{\min}(D^2u),\cdots,\lambda_{\min}(D^2u)}_{l},\underbrace{\lambda_{\max}(D^2u),\cdots,\lambda_{\max}(D^2u)}_{n-l}\},
\end{equation}
where the symbol ``$\sim$''  
 signifies identical orders of speed.

Since $u$ is a convex function, there are several usual  operators that meet the requirements of $(\mathbf{A_2})$. For instance, we observe the following: 
	 For any $t,t_1,\cdots,t_n> 0$, both the operators $(\lambda_1(D^2u))^{t}+\cdots+(\lambda_n(D^2u))^{t}$ and $(\lambda_1(D^2u))^{t_1}\cdots(\lambda_n(D^2u))^{t_n}$ fulfill $(\mathbf{A_2})$ due to the following inequalities:
	\begin{align*}
		&(\lambda_{\max}(D^2u))^{t}\leq	(\lambda_1(D^2u))^{t}+\cdots+(\lambda_n(D^2u))^{t}\leq n(\lambda_{\max}(D^2u))^{t},\\
		&(\lambda_{\min}(D^2u))^{\sum_{i=1}^nt_i-\min\{t_1,\cdots,t_n\}}(\lambda_{\max}(D^2u))^{\min\{t_1,\cdots,t_n\}}
		\leq (\lambda_1(D^2u))^{t_1}\cdots(\lambda_n(D^2u))^{t_n}\leq (\lambda_{\max}(D^2u))^{\sum_{i=1}^nt_i}.
	\end{align*}
	In general, if 
	\begin{equation*}
		F(\lambda_1(D^2u),\cdots,\lambda_n(D^2u))=\sum_{(t_1,\cdots,t_n)\in S} C(t_1,\cdots,t_n)(\lambda_1(D^2u))^{t_1}\cdots (\lambda_n(D^2u))^{t_n}
	\end{equation*}
	is well-defined with $C(t_1,\cdots,t_n)>0$ for all $(t_1,\cdots,t_n)\in S$ (which represents an index set), then 
	this $F$ also satisfies $(\mathbf{A_2})$.

Another classical type of examples for $F$ is the $k$-Hessian operator.  For any $k\in\{1,\cdots,n\}$,  
the $k$-Hessian operator is given by 
	the $k$-th elementary symmetric polynomial of eigenvalues of the Hessian:
		\begin{equation*}
		\sigma_k(D^2u)=\sum_{1\leq i_1<\cdots<i_k\leq n}\lambda_{i_1}(D^2u)\cdots \lambda_{i_k}(D^2u).
	\end{equation*} 
 Indeed, one can check
\begin{equation*}
	\binom{n-1}{k-1}(\lambda_{\min}(D^2u))^{k-1}	\lambda_{\max}(D^2u)\leq F(\lambda_1(D^2u),\cdots,\lambda_n(D^2u))\leq \binom{n}{k}(\lambda_{\max}(D^2u))^k.
\end{equation*}
 Notably,
   $\sigma_1(D^2u)=\Delta u$ corresponds to the Laplace operator, while $\sigma_n(D^2u)=\det D^2u$ corresponds to the Monge-Amp\`ere operator.  The $k$-Hessian  operator is fully nonlinear when $k \geq 2$, and is elliptic when
 restricted to $k$-admissible functions.  
 
 This leads to a variety of special cases and geometric  applications of \eqref{equation}. For example, the  $k$-Hessian type equations \cite{Wang-2009,Jian} in the form of
 \[\sigma_k(D^2u)=g(x)(d(x))^{\alpha}\]
 can be related to the 
 $k$-Yamabe problems in conformal geometry \cite{Wang-2009,Viaclovsky}   and special Lagrangian equations in calibrated geometry \cite{Yuan}. Notably, the following
 quadratic Hessian equation (sigma-2 equation)  \cite{Yuanyu}  is also included:
 \[\sigma_2(D^2u)=1.\]
When $k=n$, the Monge-Ampère type equations \cite{Figalli}, taking the form 
  \[\det D^2u=g(x)(d(x))^{\alpha},\]
figure in 
  geometric optics problems \cite{G-Huang}.  More specially, a typical example of \eqref{equation} arising in affine geometry  is 
 $$\det D^2u=1, $$
for which the Legendre transform of solution in the entire space, due to the J\"orgens-Calabi-Pogorelov theorem \cite{J,C,P},  constructs a complete parabolic affine sphere. 
Various examples and  widespread applications can also be found in   \cite{Figalli,Wang-2009,G-T}.

\subsection{Review of previous results} 

During the last half-century, 
existence and nonexistence theories of solutions to fully nonlinear elliptic equations have been investigated by many researchers. We  refer the 
readers to \cite{Liyanyan,Caffarelli-Nirenberg-Spruck-I,Caffarelli-Nirenberg-Spruck-III,Cheng-Yau,Jiang-Trudinger-Yang,Urbas90,Guanbo,Jian-Wang1,Guan-Jian,Wang-Jiang,Jian} for example for earlier works on related topics. 
In  Theorem 3 of  \cite{Cheng-Yau}, 
the existence of a unique continuous convex generalized solution  to the   problem 
 \begin{align*}
 	\det D^2u &= f(x)\quad \text{in} \; \Omega \\
 	u &= \varphi \quad \ \ \  \text{on} \; \partial\Omega
 \end{align*}
was established by Cheng and Yau, where $\Omega$ is a bounded $C^2$ strictly convex domain, $\varphi\in C^2(\partial\Omega)$, and 
 $f$ belongs to class $C^k$ ($k\geq3$) and behaves  near the boundary as follows:
 \begin{equation*}
 	0<f(x)\leq A (d(x))^{\alpha}
 \end{equation*}
 for some constants $A>0$ and $\alpha>-n-1$. 
In Theorem 2 of    \cite{Caffarelli-Nirenberg-Spruck-III}, given the smoothness and some appropriate conditions on  the domain $\Omega$,  operator $F$, right-hand-side function $f$ and boundary function $\varphi$ respectively,  Caffarelli, Nirenberg and Spruck obtained
 the existence of a unique smooth admissible solution  to the   problem 
 \begin{align*}
F\left(\lambda_1(D^2 u),\cdots,\lambda_n(D^2 u)\right) &= f(x)\quad \text{in} \; \Omega \\
 	u &= \varphi \quad \ \ \ \text{on} \; \partial\Omega,
 \end{align*}
where it is noteworthy that the  right-hand-side function $f$ 
is required to be smooth and positive, which also means that the equation cannot be degenerate or singular. 
In comparison, the equations considered in this paper  
may exhibit degeneracy or singularity, and  
we aim to  
comprehensively classify the existence and nonexistence of solutions based on the values of $\alpha$.
We note that the continuity of the right-hand-side coefficient function $g(x)$ in  $(\mathbf{A_3})$ does not guarantee the existence of a classical solution to the problem  \eqref{equation}-\eqref{bcondition}. Instead,  
 our focus will be on the existence and nonexistence of viscosity solutions to this problem  rather than classical solutions.  
 
 The Perron method is known as a powerful technique used to establish the existence of viscosity solutions. This method   might ﬁrst appear in \cite{Ishii-1}, introduced by Ishii for the study of Hamilton-Jacobi equations, and  shortly afterwards, was 
applied to infer existence of viscosity solutions of the Dirichlet problem of fully nonlinear elliptic equations in \cite{Ishii-2}.  
 The main idea of this method involves constructing a family of 
 viscosity sub-solutions or  viscosity super-solutions  that serve as candidates for the viscosity solution. By considering the supremum of  
  viscosity sub-solutions or the infimum of  
  viscosity super-solutions, one can identify a function that satisfies the  
   conditions of being a viscosity solution. 
  The whole process also heavily  relies on using 
 comparison principle. The Perron method, comparison principle as well as  constructions of suitable classical sub-solutions and  super-solutions  
 will play an important role in this paper.

This paper also builds upon and extends our earlier works \cite{Li-Li-2022,Li-Li-2025}.  
In \cite{Li-Li-2022}, assuming the existence of solution, we derived the \textit{a priori} H\"older regularity estimates for the Dirichlet problem of fully nonlinear elliptic equations:
\begin{equation*}\begin{split} 
		F(\lambda_1(D^2u),\cdots,\lambda_n(D^2u))&=f(x,u,\nabla u)\text{\hspace{0.2cm} in } \Omega,\\
		u&=0\text{\hspace{1.7cm} on } \partial\Omega.
\end{split}\end{equation*} 
These estimates were obtained under certain structure conditions on the domain $\Omega$, operator $F$ and right-hand-side function $f$ respectively. In particular,   
  $f$ was required to satisfy the following bound:
\begin{equation*} 
	\begin{split}
		&\text{for some constants $A \geq 0$, $\alpha\in\mathbb{R}$, $\gamma\in\mathbb{R}$, $\beta\geq \max\{n+1,n+1+ \gamma\}$, there holds}\\
		& 0\leq f(x,z,q)\leq
		A(d(x))^{\beta-n-1}|z|^{-\alpha}(1+|q|^2)^{\frac{\gamma}{2}},\ \ \forall
		(x,z,q)\in\Omega	\times(-\infty,0) \times \mathbb{R}^n,
\end{split}\end{equation*}
where $	\beta-n-1\geq 0$ always holds, implying that the exponent of $d(x)$ must be nonnegative. In contrast to \cite{Li-Li-2022}, the results in this paper will thoroughly analyze all cases  
involving the exponent of $d(x)$ for the problem \eqref{equation}-\eqref{bcondition}. Furthermore, we will establish a nonexistence result in this paper  
when 
the exponent of $d(x)$ is sufficiently small, specifically when it is no greater than $-a-2b$, i.e. $\alpha\leq -a-2b$ based on the  parameter notations in  \eqref{equation} and $(\mathbf{A_2})$.

Our recent work \cite{Li-Li-2025}  relates to the study of viscosity solution to the Dirichlet problem of Monge-Amp\`ere type equations:
\begin{equation}\begin{split}\label{eq:2025}
		\det D^2u&=f(x,u,\nabla u)\text{\hspace{0.2cm} in } \Omega,\\
		u&=0\text{\hspace{1.7cm} on } \partial\Omega. 
\end{split}\end{equation}
Under certain structure conditions on the domain $\Omega$ and right-hand-side function $f$, 
we established the existence, uniqueness and interior regularity of the viscosity solution to this problem. We recall  that  $f$ therein satisfies that
\begin{equation*} 
	\begin{split}
		&\text{for some constants $A > 0$, $\alpha\in\mathbb{R}$, $\beta\geq n+1$, $\gamma<\min\{n+\alpha,\beta-n+1\}$,  there holds}\\
		& 0< f(x,z,q)\leq
		A(d(x))^{\beta-n-1}|z|^{-\alpha}(1+|q|^2)^{\frac{\gamma}{2}},\ \ \forall
		(x,z,q)\in\Omega	\times(-\infty,0) \times \mathbb{R}^n, 
\end{split}\end{equation*} 
where  $\beta-n-1\geq 0$ also means that the exponent of $d(x)$ needs to be nonnegative.  
Unlike \cite{Li-Li-2025}, for the problem \eqref{equation}-\eqref{bcondition}, given that $\partial\Omega\in C^2$ is uniformly convex, we will  construct  different auxiliary functions as the  exponent of $d(x)$ varies, and moreover
in the critical case $\alpha=-b$ based on the  parameter notations in \eqref{equation} and $(\mathbf{A_2})$,  we will obtain a global $\log$-type estimate for the viscosity solution.

\subsection{Main results and outline of this paper}

According to the assumption $(\mathbf{A_4})$,   one can extend $\varphi$ from $\partial\Omega$ to $\overline{\Omega}$. We are going to  uniquely extend  $\varphi$ to  $\overline{\Omega}$ through one Monge-Amp\`ere equation. 
In fact, by the existence result for Monge-Amp\`ere equations with continuous boundary data (see Theorem 2.14 in  \cite{Figalli}), 
there exists a unique convex function $\psi\in C(\overline{\Omega})$ that solves the   Dirichlet problem
\begin{equation}\label{eq:MA-1}
	\begin{split}
		\det D^2\psi&=0\ \ \text{ in }\Omega,\\
		\psi&=\varphi\ \ \text{ on }\partial\Omega.
	\end{split}
\end{equation}

\begin{Remark}\label{remark:extension}
	In some sense, the extension function	$\psi\in C(\overline{\Omega})$ 
	can be regarded as a weak solution to the  following  homogeneous equation with Dirichlet boundary condition:
	\begin{equation}\label{eq:MA-3}
		\begin{split}
			\Lambda_0\left(\lambda_{\min}(D^2\psi)\right)^{a}\left(\lambda_{\max}(D^2\psi)\right)^{b}&=0\text{\hspace{0.2cm} in } \Omega,\\
			\psi&=\varphi\text{\hspace{0.2cm} on } \partial\Omega,
		\end{split}
	\end{equation}
	This observation will be easy to explain  
	if we assume  $\psi\in C^2(\Omega)$. 
	We first note that
	\[\lambda_1(D^2\psi)\cdots \lambda_n(D^2\psi)=\det D^2\psi=0 \ \ \text{ in }\Omega. \]
	Since $\psi$ is convex, it holds  in $\Omega$ that
	\[\lambda_i(D^2\psi)\geq 0,\ \ \forall i=1,\cdots,n.\]
	Now we can infer that 
	\[\lambda_{\min}(D^2\psi)=0\ \ \text{ in }\Omega. \] 
	Therefore, $\psi$ actually satisfies \eqref{eq:MA-3} 
	for any constants $\Lambda_0>0$, $a> 0$ and $b> 0$.
\end{Remark}

Now we are ready to state our main results. A series of existence results  can be elaborated as the following theorem. Global estimates for viscosity solutions via distance function estimates are also obtained. 
 
\begin{Theorem}[Existence Theorem]\label{thm1}
	Under the assumptions $(\mathbf{A_1})$,  $(\mathbf{A_2})$-$(\mathrm{i})$,  $(\mathbf{A_3})$ and $(\mathbf{A_4})$, we have a list of existence and estimation conclusions based on varying values of $\alpha$:
	\begin{enumerate}[(i)]
		\item If $\alpha\in (-b,+\infty)$, then the problem \eqref{equation}-\eqref{bcondition} admits a viscosity solution $u(x)\in C(\overline{\Omega})$, and there exists some constant $\overline{C_0}>0$ such that 
		\begin{equation*}
			u(x)\geq \psi(x)-\overline{C_0}\,d(x), \ \ \forall x\in\overline{\Omega}.
		\end{equation*}

\item If $\alpha=-b$, then the problem \eqref{equation}-\eqref{bcondition} admits a viscosity solution $u(x)\in C(\overline{\Omega})$, and there exists some constants $\overline{C_0},\,\overline{C_1}>0$ such that 
\begin{equation*}
u(x)\geq \psi(x)-\overline{C_0}\,d(x)\left(-\log\left(\overline{C_1}\,d(x)\right)\right)^{\frac{b}{a+b}}, \ \ \forall x\in\overline{\Omega}.
\end{equation*}

\item If $\alpha\in(-a-2b,-b)$, then the problem \eqref{equation}-\eqref{bcondition} admits a viscosity solution $u(x)\in C(\overline{\Omega})$, and there exists some constant $\overline{C_0}>0$ such that 
\begin{equation*}
	u(x)\geq \psi(x)-\overline{C_0}\left(d(x)\right)^{\frac{a+2b+\alpha}{a+b}}, \ \ \forall x\in\overline{\Omega}.
\end{equation*}
	\end{enumerate} 
\end{Theorem}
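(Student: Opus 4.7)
The plan is a unified Perron-type construction for all three cases. I would set
\begin{equation*}
u(x) := \sup\bigl\{w(x) : w \text{ is a convex viscosity sub-solution of \eqref{equation} with } w|_{\partial\Omega}\le \varphi\bigr\},
\end{equation*}
exhibit an explicit classical sub-solution $\underline{u}$ so that this family is non-empty, and observe that it is automatically bounded above by the convex envelope $\psi$ from \eqref{eq:MA-1} (any convex $w$ with $w|_{\partial\Omega}\le \varphi$ is dominated by $\psi$, which is precisely the upper envelope of affine minorants of $\varphi$). Then the standard stability-and-bumping arguments, combined with the comparison principle and the continuity of $F$ from $(\mathbf{A_2})$, will yield that $u$ is a viscosity solution of \eqref{equation}-\eqref{bcondition}. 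Boundary attainment $u=\varphi$ on $\partial\Omega$ is squeezed from $\underline{u}\le u\le \psi$, both of which continuously attain $\varphi$. The three pointwise lower bounds in the theorem then follow directly from $u\ge \underline{u}$ once $\underline{u}$ has been constructed in the claimed form.

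The main technical step is therefore the barrier design in a collar $\{0<d<d_0\}$, where $(\mathbf{A_1})$ ensures $d\in C^2$ with $|\nabla d|=1$, vanishing normal eigenvalue of $D^2 d$, and tangential eigenvalues bounded away from zero. Taking $w:=-C_0 h(d)$ with $h$ increasing and concave on $(0,d_0)$, a direct computation yields
\begin{equation*}
\lambda_{\max}(D^2 w)\sim -C_0\, h''(d)\ \text{(normal)},\qquad \lambda_{\min}(D^2 w)\sim C_0\, h'(d)\ \text{(tangential)},
\end{equation*}
so the bound in $(\mathbf{A_2})$-(i) reduces the sub-solution inequality $F(D^2 w)\ge g\,d^{\alpha}$ to the one-dimensional ODE-type condition
\begin{equation*}
\bigl(h'(d)\bigr)^{a}\bigl(-h''(d)\bigr)^{b}\gtrsim d^{\alpha},\qquad h'>0,\ h''<0.
\end{equation*}
For case (iii), the ansatz $h(d)=d^{(a+2b+\alpha)/(a+b)}$ with exponent in $(0,1)$ balances the two sides exactly. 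In the critical case (ii) this exponent saturates at $1$, so I would insert a logarithmic correction $h(d)=d\bigl(-\log(C_1 d)\bigr)^{b/(a+b)}$; the power $b/(a+b)$ is pinned down precisely by balancing the $(\log)^{\gamma-1}$ and $(\log)^{\gamma-2}$ factors arising in $h''$. Case (i) is the least singular: essentially $h(d)=d$ works, modulo a small strictly concave lower-order correction of the form $\varepsilon d^{\beta}$ with $\beta\in(0,1)$ to turn the degenerate equality into a strict inequality while preserving the target estimate $\underline{u}\ge \psi-\overline{C_0}\,d$.

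With $w$ fixed, set $\underline{u}=\psi+w$ in the collar (so $\underline{u}|_{\partial\Omega}=\varphi$ because $h(d)\to 0$), extended to $\overline{\Omega}$ by taking a maximum with an affine minorant. The viscosity sub-solution check is then clean: if $\phi\in C^2$ touches $\underline{u}$ from above at $x_0$, then $\phi-w$ touches $\psi$ from above; the generalized Monge-Amp\`ere structure of $\psi$ recorded in Remark~\ref{remark:extension} forces $D^2\phi(x_0)\ge D^2 w(x_0)$ in the positive semi-definite order, and semi-ellipticity combined with the ODE inequality above delivers $F(D^2\phi(x_0))\ge g(x_0)\,d(x_0)^{\alpha}$ as required. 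The principal obstacle I anticipate is case (ii): constructing the log-type barrier and verifying the exact balance $b/(a+b)$ requires a delicate expansion tracking both leading and subleading logarithmic terms in $h''$, and the interplay between the constants $C_0,C_1$ and the collar size $d_0$ must be controlled carefully. A secondary subtlety common to all three cases is the viscosity testing at points where $\psi$ fails to be $C^2$, where the generalized-solution content of \eqref{eq:MA-1} is indispensable for turning the upper-touching of $\psi$ into the matrix comparison $D^2\phi(x_0)\ge D^2 w(x_0)$ used above.
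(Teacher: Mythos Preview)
Your overall strategy (Perron supremum over convex viscosity sub-solutions, bounded above by $\psi$, bounded below by an explicit barrier) and your barrier profiles $h(d)=d^{(a+2b+\alpha)/(a+b)}$, $h(d)=d\bigl(-\log(C_1 d)\bigr)^{b/(a+b)}$, and $h(d)\sim d$ match the paper exactly in spirit. The one-dimensional ODE heuristic you wrote down is precisely how the paper selects the exponents.

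There is, however, a genuine gap in your globalization step. You build $\underline{u}=\psi-C_0h(d)$ only in a collar where $d\in C^2$, and then propose to ``extend to $\overline{\Omega}$ by taking a maximum with an affine minorant.'' An affine function $\ell$ has $D^2\ell=0$, so $F(\lambda(D^2\ell))\ge \Lambda_0\cdot 0=0$, whereas $g(x)d(x)^\alpha>0$ in the interior; thus $\ell$ is \emph{not} a viscosity sub-solution of \eqref{equation}, and the max is not a sub-solution either. You need a genuinely convex interior piece (e.g.\ a scaled paraboloid) together with a careful ordering argument at the interface, and this is where the work lies. The paper avoids this patching entirely by replacing $d$ with a globally $C^2(\overline{\Omega})$ uniformly convex defining function $\phi_0$ (built from $-4\operatorname{diam}(\Omega)d+d^2$ near $\partial\Omega$ and extended inward), so that the barrier is smooth on all of $\Omega$ and the sub-solution inequality can be checked classically everywhere; the domain is then split only into ``near-boundary'' and ``interior'' pieces to handle the degeneration of $|D\phi_0|$, not to patch regularity.

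A second difference is your treatment of $\psi$. You work with $\psi$ directly and argue that if $\phi\in C^2$ touches $\psi+w$ from above then $D^2\phi\ge D^2w$ via convexity of $\psi$. This is correct (a supporting hyperplane to $\psi$ at the contact point gives it), and it is a nice alternative to what the paper does: the paper instead replaces $\psi$ by smooth approximants $\psi_k$ solving $\det D^2\psi_k=1/k$, $\psi_k|_{\partial\Omega}=\varphi$, so that $W_k=(\text{barrier in }\phi_0)+\psi_k$ is genuinely $C^2$; the estimate against $\psi$ is then recovered by letting $k\to\infty$. Your route is more direct but, as written, it only applies inside the collar where $w$ is $C^2$; combined with the affine-extension issue above, you do not yet have a global sub-solution in hand.
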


 Our second theorem addresses the issue of nonexistence: 
 
	\begin{Theorem}[Nonexistence Theorem]\label{thm2}
		Under the assumptions $(\mathbf{A_1})$,  $(\mathbf{A_2})$-$(\mathrm{ii})$,  $(\mathbf{A_3})$ and $(\mathbf{A_4})$, we have one nonexistence conclusion: 	
		if $\alpha\in(-\infty,-a-2b]$, then the problem \eqref{equation}-\eqref{bcondition} 
		does not admit any viscosity solutions, and consequently, it also does not admit classical solutions.
 \end{Theorem}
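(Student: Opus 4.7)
The plan is a proof by contradiction. Assume $u\in C(\overline\Omega)$ is a viscosity solution of \eqref{equation}-\eqref{bcondition}. Combining the viscosity sub-solution property of $u$ with $(\mathbf{A_2})$-(ii) and $g\geq m_0$ from $(\mathbf{A_3})$, for every $C^2$ function $\phi$ touching $u$ from above at an interior point $x_0$ one has
\[
\Lambda_0\,\lambda_{\min}(D^2\phi(x_0))^{a}\,\lambda_{\max}(D^2\phi(x_0))^{b}\;\geq\;F(\lambda(D^2\phi(x_0)))\;\geq\;m_0\,d(x_0)^{\alpha}.
\]
Thus $u$ is a viscosity sub-solution of the model equation $\Lambda_0\,\lambda_{\min}^{a}\,\lambda_{\max}^{b}=m_0\,d^{\alpha}$, and the task reduces to ruling out convex continuous sub-solutions of this model on $\overline\Omega$ with boundary data $\varphi$ when $\alpha\leq -a-2b$.

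To this end I will compare $u$ with a classical smooth super-solution of the model built in a tubular neighborhood $\Omega_{\delta_0}=\{d<\delta_0\}$, on which $d\in C^2$ by $(\mathbf{A_1})$. Mirroring the barrier used in case (iii) of Theorem~\ref{thm1}, I take
\[
\Phi(x)=\psi(x)-c\,(d(x))^{\gamma},\qquad \gamma\in(0,1),\;c>0,
\]
with $\psi$ the convex extension from \eqref{eq:MA-1}. Direct eigenvalue computation (expressing $D^2 d$ via the principal curvatures $\kappa_i$) gives normal eigenvalue $c\gamma(1-\gamma)d^{\gamma-2}$ and tangential eigenvalues of order $c\,d^{\gamma-1}\kappa_i/(1-d\kappa_i)$ for $D^2(-c\,d^{\gamma})$ near $\partial\Omega$. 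Combining via Weyl's inequalities with $D^2\psi\geq 0$ and the degeneracy $\lambda_{\min}(D^2\psi)=0$ of Remark~\ref{remark:extension}, the leading-order behavior as $d\to 0^+$ is
\[
\Lambda_0\,\lambda_{\min}(D^2\Phi)^{a}\lambda_{\max}(D^2\Phi)^{b}\;\leq\;C\,c^{a+b}\,d^{(a+b)\gamma-a-2b}.
\]
Since $\alpha\leq -a-2b$ and $\gamma\in(0,1)$ imply $(a+b)\gamma-a-2b>\alpha$, for $\delta_0$ sufficiently small and any $c\leq c_\star:=(m_0/(C\Lambda_0))^{1/(a+b)}$, the inequality $\Lambda_0\lambda_{\min}^{a}\lambda_{\max}^{b}(D^2\Phi)\leq m_0 d^{\alpha}$ holds throughout $\Omega_{\delta_0}$, so $\Phi$ is a classical super-solution of the model there.

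On $\partial\Omega$, $\Phi=\psi=\varphi=u$; on $\{d=\delta_0\}$, the condition $\Phi\geq u$ reduces to $c\,\delta_0^{\gamma}\leq\psi-u$. A Hopf-type argument --- the degeneracy $\lambda_{\min}(D^2\psi)=0$ combined with $(\mathbf{A_2})$-(ii) forces $F(\lambda(D^2\psi))\leq 0<g\,d^{\alpha}=F(\lambda(D^2u))$ at any putative interior touching point, contradicting the semi-ellipticity of $F$ --- then shows $\psi>u$ strictly in the interior, so $\mu:=\inf_{\{d=\delta_0\}}(\psi-u)>0$. Choosing $c=\min\{c_\star,\mu\delta_0^{-\gamma}\}>0$ and invoking the comparison principle for the model equation yields
\[
\psi(x)-u(x)\;\geq\; c\,(d(x))^{\gamma}\qquad\text{in }\Omega_{\delta_0}.
\]
With $\delta_0$ fixed, letting $\gamma\to 0^{+}$ sends $c\to\min\{c_\star,\mu\}>0$ and $d(x)^{\gamma}\to 1$ pointwise for $d(x)>0$. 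Consequently $\psi(x)-u(x)\geq\min\{c_\star,\mu\}>0$ for every $x\in\Omega_{\delta_0}$ with $d(x)>0$, contradicting the continuity of $\psi-u$ on $\overline\Omega_{\delta_0}$, which forces $\psi(x)-u(x)\to 0$ as $x\to\partial\Omega$. This rules out viscosity solutions, and hence classical solutions as well.

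The principal obstacle will be to make the Hopf-type strict positivity $\psi>u$ rigorous: when $a>0$ the argument sketched above is clean because $\lambda_{\min}(D^2\psi)=0$ immediately forces $F(\lambda(D^2\psi))\leq 0$; when $a=0$ the upper bound $F\leq\Lambda_0\lambda_{\max}^{b}$ does not vanish at the zero-determinant locus, so a refined perturbation (for instance, replacing $\psi$ by a slightly enlarged convex competitor, or invoking a strong comparison tailored to the model operator) will be needed. A secondary difficulty is the low regularity of $\psi$ (only continuous), which forces the normal/tangential eigenvalue bookkeeping to be carried out either after a convex mollification --- with the mollification error absorbed into $c$ --- or in the viscosity sense via doubling of variables. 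Both issues are technical variants of the comparison-and-barrier machinery already powering Theorem~\ref{thm1}.
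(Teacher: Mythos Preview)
Your contradiction-via-barrier strategy matches the paper's, but your choice of barrier creates the very obstacles you flag at the end. Building $\Phi=\psi-c\,d^{\gamma}$ forces you to control $D^{2}\psi$, and $\psi$ is only continuous; even your leading-order bound $\Lambda_0\lambda_{\min}^{a}\lambda_{\max}^{b}(D^2\Phi)\leq C\,c^{a+b}d^{(a+b)\gamma-a-2b}$ implicitly requires an upper bound on $\bm{\xi}^{T}D^{2}\psi\,\bm{\xi}$ in tangential directions, which is unavailable without $\psi\in C^{2}(\overline\Omega)$. The Hopf step for $\psi>u$ and the separate treatment of $a=0$ are further symptoms of the same choice; the mollification and doubling-of-variables fixes you propose are not obviously compatible with keeping $\Phi|_{\partial\Omega}=\varphi$ and with bounding $D^{2}\psi_\varepsilon$ uniformly near $\partial\Omega$.

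The paper dissolves all of this with one substitution: replace $\psi$ by the constant $\max_{\partial\Omega}\varphi$ and replace $d$ by the $C^{2}(\overline\Omega)$ uniformly convex defining function $\phi_0$ of Lemma~\ref{lemma:uniform-convex}. The barrier
\[
W=-C(-\phi_0)^{\theta}+\max_{\partial\Omega}\varphi
\]
is globally defined and $C^{2}$ on $\overline\Omega$, with $W|_{\partial\Omega}=\max_{\partial\Omega}\varphi\geq\varphi=u|_{\partial\Omega}$ for free---so no tubular neighborhood, no interior boundary, no Hopf argument, and no $a=0$ subcase are needed. Since the constant contributes nothing to the Hessian, the Rayleigh-quotient computation is explicit: testing with the eigenvector $\bm{\xi_1}$ of $D\phi_0\otimes D\phi_0$ with eigenvalue $0$ gives $\lambda_{\min}(D^{2}W)\leq C\theta(-\phi_0)^{\theta-1}\overline{\rho_0}$, while $\lambda_{\max}(D^{2}W)$ is bounded by the sum of the two terms. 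The crucial factor $\theta^{a+b}$ then shows that for \emph{every} $C>0$ there is $\theta_0(C)\in(0,1]$ making $W$ a strict classical super-solution on all of $\Omega$. Lemma~\ref{lemma1} yields $u(x_0)\leq -C\eta_0^{\theta_0}+\max_{\partial\Omega}\varphi\leq -C\min\{\eta_0,1\}+\max_{\partial\Omega}\varphi$ at the fixed interior point where $-\phi_0(x_0)=\eta_0$, and sending $C\to\infty$ gives $u(x_0)=-\infty$, contradicting $u\in C(\overline\Omega)$. Adopting the constant in place of $\psi$ is the missing idea.
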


The following corollary follows directly from Theorems \ref{thm1} and   \ref{thm2}: 

\begin{Corollary}[Special Case]\label{thm3}
	Under the assumptions   $(\mathbf{A_1})$, $(\mathbf{A_3})$ and $(\mathbf{A_4})$, 
	all the conclusions in Theorems \ref{thm1} and   \ref{thm2} 
	remain valid for the following problem:
	\begin{align*}
		\Lambda_0\left(\lambda_{\min}(D^2u)\right)^{a}\left(\lambda_{\max}(D^2u)\right)^{b}&=g(x)(d(x))^{\alpha}\text{\hspace{0.2cm} in } \Omega,\stepcounter{equation}\tag{\theequation}\label{equation'}\\
		u&=\varphi\text{\hspace{1.7cm} on } \partial\Omega,\stepcounter{equation}\tag{\theequation}\label{bcondition'}
	\end{align*}
	for any constants $\Lambda_0>0$, $a\geq 0$ and $b> 0$.
\end{Corollary}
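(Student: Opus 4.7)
The strategy is to verify that the specific operator
\[
F(\lambda_1,\ldots,\lambda_n)=\Lambda_0\bigl(\lambda_{\min}\bigr)^{a}\bigl(\lambda_{\max}\bigr)^{b}
\]
satisfies assumption $(\mathbf{A_2})$, after which Theorems \ref{thm1} and \ref{thm2} apply verbatim and give the claimed existence/nonexistence dichotomy for \eqref{equation'}--\eqref{bcondition'}. Notably, this $F$ satisfies both the lower bound $(\mathbf{A_2})$-(i) and the upper bound $(\mathbf{A_2})$-(ii) simultaneously with equality and with the same constants $\Lambda_0, a, b$, so once $(\mathbf{A_2})$ is checked one may invoke Theorem \ref{thm1} for $\alpha\in(-a-2b,+\infty)$ and Theorem \ref{thm2} for $\alpha\in(-\infty,-a-2b]$.

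First I would record the continuity: $\lambda_{\min}$ and $\lambda_{\max}$ are continuous (in fact Lipschitz) symmetric functions of eigenvalues, and since $a\geq 0,\ b>0$ and we are evaluating on convex $u$ so that the relevant arguments are nonnegative, $F\in C(\mathbb{R}^n)$. Next I would verify the semi-elliptic condition \eqref{semi-elliptic}: for positive semi-definite matrices $A\geq B\geq 0$, Weyl's monotonicity principle (equivalently the min-max characterization) yields
\[
\lambda_{\min}(A)\geq\lambda_{\min}(B)\geq 0,\qquad \lambda_{\max}(A)\geq \lambda_{\max}(B)\geq 0;
\]
since the exponents $a,b$ are nonnegative, monotonicity of the power maps on $[0,\infty)$ then gives $F(\lambda_1(A),\ldots,\lambda_n(A))\geq F(\lambda_1(B),\ldots,\lambda_n(B))$.

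With $(\mathbf{A_2})$ in hand, both Theorems apply to \eqref{equation'}--\eqref{bcondition'} with the same parameters $\Lambda_0, a, b$, and the corollary is immediate. The only subtle point worth flagging is the case $a=0$: then $F=\Lambda_0(\lambda_{\max})^b$ is independent of $\lambda_{\min}$, but continuity and semi-ellipticity still hold by the same argument, and the threshold $-a-2b=-2b$ from Theorem \ref{thm2} and the critical exponent $-b$ from Theorem \ref{thm1}(ii) remain meaningful; no separate argument is needed. Since $F$ realizes both inequalities in $(\mathbf{A_2})$ with equality, there is no gap between the existence regime and the nonexistence regime, and the classification of Theorems \ref{thm1}--\ref{thm2} is seen to be sharp for this model operator.
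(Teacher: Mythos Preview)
Your proposal is correct and matches the paper's approach: the paper simply states that the corollary ``follows directly from Theorems \ref{thm1} and \ref{thm2}'' without further argument, and your plan spells out precisely why---namely that this particular $F$ satisfies $(\mathbf{A_2})$ with both boundedness conditions (i) and (ii) holding as equalities, so both theorems apply with the same parameters. Your additional remarks on continuity, semi-ellipticity via Weyl monotonicity, the $a=0$ edge case, and the sharpness observation are all sound elaborations of what the paper leaves implicit.
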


\begin{Remark}
	For the problem  \eqref{equation}-\eqref{bcondition}, 
by combining Theorems \ref{thm1} and   \ref{thm2},	 we achieve a complete classification for   $\alpha\in (-\infty,+\infty)$, i.e. the exponent of $d(x)$ in  \eqref{equation}.  	For the problem  \eqref{equation'}-\eqref{bcondition'}, 
this configuration also provides a complete classification on the  existence and nonexistence results for the viscosity solution. Intuitively, when $\alpha$ is large (tending more towards the degenerate case), viscosity solutions exist. In contrast, when $\alpha$ is small (tending more towards the singular case), viscosity solutions do not exist. The critical value that distinguishes these two scenarios is $-a - 2b$. We will explain these intuitions in Section \ref{sec:discuss}. 
\end{Remark}

The rest of this paper is organized as follows. In Section \ref{Sec:Pre}, we present basic concepts,  preliminary results and necessary estimates for the problem \eqref{equation}-\eqref{bcondition}. In Section \ref{Sec:lemma2}, we establish a key lemma that provides a transition from classical sub-solutions to viscosity solutions. Based on this transition and constructions of suitable classical sub-solutions, Section \ref{Sec:Case 1-3} is dedicated to the proofs for existence results and global estimates of viscosity solutions  in Theorem \ref{thm1}.  We devote Section \ref{Sec:Case 5} to demonstrate the nonexistence result in Theorem \ref{thm2} through a contradiction by constructing a suitable classical super-solution. Finally, some intuitions and discussions regarding our results are provided in Section \ref{sec:discuss}.

\section{Preliminaries}\label{Sec:Pre}

Throughout this paper, we adhere to the following convention:  for any function $\Phi$, we always use the notation $D^2\Phi>(<)\, 0$ to represent that the Hessian matrix $(D^2\Phi)$ is positive (negative) definite and use the notation $D^2\Phi\geq(\leq)\, 0$ to represent that the Hessian matrix $(D^2\Phi)$ is positive (negative) semi-definite. 

\subsection{Classical solutions and viscosity solutions}

Let us begin this subsection by clarifying the concepts of classical solutions and viscosity solutions to the problem \eqref{equation}-\eqref{bcondition}.

\begin{Definition}\label{def:class}
		Let $u\in C^2(\Omega)\cap C(\overline{\Omega})$ be a convex function. 
	We refer to $u$ as a classical solution (sub-solution, super-solution) to \eqref{equation} over $\Omega$ if  
	\begin{equation*}
		F\left(\lambda_1(D^2 u),\cdots,\lambda_n(D^2 u)\right) =\,(\geq,\, \leq)\, g(x)(d(x))^{\alpha}\text{\hspace{0.2cm} in } \Omega.
	\end{equation*}
  $u$ is called  a classical solution to the  problem  \eqref{equation}-\eqref{bcondition}  over $\Omega$ if it is a classical solution to \eqref{equation}  over $\Omega$ and satisfies \eqref{bcondition} on $\partial\Omega$.  Moreover, we say that $u$ is a classically strict  sub-solution (super-solution)  to \eqref{equation} over $\Omega$ if 
  \[	F\left(\lambda_1(D^2 u),\cdots,\lambda_n(D^2 u)\right)>(<)\, g(x)(d(x))^{\alpha}\text{\hspace{0.2cm} in } \Omega.\] 
\end{Definition}
 
\begin{Definition}\label{def:vis}
	Let $u\in C(\overline{\Omega})$ be a convex function. 
   We refer to $u$ as a viscosity sub-solution (super-solution)  to \eqref{equation}  over $\Omega$ 
if  
for any point $x_0\in\Omega$,  any open neighborhood $U(x_0)\subset\Omega$ and any convex function $\phi\in C^2(U(x_0))$ satisfying
\[(\phi-u)(x)\geq\, (\leq)\, (\phi-u)(x_0),\ \ \forall x\in U(x_0),\]
there holds 
\begin{equation*}
F\left(\lambda_1(D^2 \phi(x_0)),\cdots,\lambda_n(D^2 \phi(x_0))\right)\geq\, ( \leq)\, g(x_0)(d(x_0))^{\alpha}.
\end{equation*}
 $u$ is said to be  a viscosity  solution to  \eqref{equation}  over $\Omega$ if it is both a viscosity sub-solution and a viscosity super-solution to \eqref{equation}  over $\Omega$. 
   $u$ is called    a viscosity  solution to the   problem  \eqref{equation}-\eqref{bcondition}  over $\Omega$ if it is a viscosity solution to \eqref{equation}  over $\Omega$ and satisfies \eqref{bcondition} on $\partial\Omega$. 
\end{Definition}

See the lectures of Crandall collected in \cite{Crandall}   for a general theory on viscosity solutions. 
Next, we will demonstrate the  equivalent condition and  comparison principle for viscosity solutions  to \eqref{equation} as the following two lemmas.

\begin{Lemma}\label{lemma:equiv}
	Under the semi-elliptic assumption in  $(\mathbf{A_2})$,  
	if $u\in C^2(\Omega)$ is a convex function, then the following two statements are equivalent:
	\begin{enumerate}
	\item[(i)] $u$ is a viscosity solution (sub-solution, super-solution) to \eqref{equation}  over $\Omega$;
	\item[(ii)] $u$ is a classical solution (sub-solution, super-solution) to \eqref{equation}  over $\Omega$.
	\end{enumerate}
\end{Lemma}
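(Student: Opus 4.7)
The lemma is a standard consistency result: for genuinely $C^{2}$ convex functions the pointwise (classical) and test-function (viscosity) formulations must agree. I would prove the two implications separately, and the semi-ellipticity in $(\mathbf{A_2})$ is what makes the nontrivial direction go through.

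For the direction (ii) $\Rightarrow$ (i), suppose $u$ is a classical sub-solution. Fix any $x_{0}\in\Omega$, any neighborhood $U(x_{0})\subset\Omega$, and any convex $\phi\in C^{2}(U(x_{0}))$ with $(\phi-u)(x)\geq (\phi-u)(x_{0})$ on $U(x_{0})$. Since both $\phi$ and $u$ are $C^{2}$, the second derivative test at the interior minimizer $x_{0}$ of $\phi-u$ gives $D^{2}\phi(x_{0})-D^{2}u(x_{0})\geq 0$, i.e. $D^{2}\phi(x_{0})\geq D^{2}u(x_{0})$. Because $u$ is convex we have $D^{2}u(x_{0})\geq 0$, and hence $D^{2}\phi(x_{0})\geq 0$ as well; thus both matrices are positive semi-definite and we may apply the semi-elliptic condition \eqref{semi-elliptic} of $(\mathbf{A_2})$ to obtain
\[
F\bigl(\lambda_{1}(D^{2}\phi(x_{0})),\ldots,\lambda_{n}(D^{2}\phi(x_{0}))\bigr)\geq F\bigl(\lambda_{1}(D^{2}u(x_{0})),\ldots,\lambda_{n}(D^{2}u(x_{0}))\bigr)\geq g(x_{0})(d(x_{0}))^{\alpha},
\]
where the second inequality is the classical sub-solution property. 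This is exactly the viscosity sub-solution inequality at $x_{0}$. The super-solution case is symmetric: if $\phi-u$ has a local maximum at $x_{0}$, then $D^{2}\phi(x_{0})\leq D^{2}u(x_{0})$, and since $\phi$ is assumed convex in the definition both matrices remain positive semi-definite, so semi-ellipticity reverses to give the required viscosity upper bound.

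For the direction (i) $\Rightarrow$ (ii), the trick is simply to use $u$ itself as a test function. Since $u\in C^{2}(\Omega)$ is convex, for any $x_{0}\in\Omega$ and any neighborhood $U(x_{0})\subset\Omega$ the choice $\phi\equiv u$ is an admissible test function, and $(\phi-u)(x)\equiv 0\equiv(\phi-u)(x_{0})$ satisfies both $\geq$ and $\leq$ simultaneously. The viscosity sub-solution (resp.\ super-solution) inequality then reads $F(\lambda_{1}(D^{2}u(x_{0})),\ldots,\lambda_{n}(D^{2}u(x_{0})))\geq g(x_{0})(d(x_{0}))^{\alpha}$ (resp.\ $\leq$), which is the classical sub/super-solution condition at the arbitrary point $x_{0}$.

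The main (mild) obstacle to be careful with is the convexity constraint on the test function $\phi$ built into Definition \ref{def:vis}, because it is slightly non-standard. In the (ii) $\Rightarrow$ (i) direction this causes no trouble, as one only needs admissible test functions to verify the inequality. In the (i) $\Rightarrow$ (ii) direction it matters crucially that we can take $\phi=u$, which is permitted exactly because $u$ is itself convex and $C^{2}$; this is what lets us avoid having to approximate general $C^{2}$ tests by convex ones. I would record this observation explicitly in the proof so that the role of the convexity assumption in Definition \ref{def:vis} is transparent.
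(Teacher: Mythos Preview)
Your proposal is correct and follows essentially the same argument as the paper's own proof: the direction (i) $\Rightarrow$ (ii) is obtained by testing with $\phi=u$, and (ii) $\Rightarrow$ (i) by the second-derivative test at the extremum combined with the semi-ellipticity \eqref{semi-elliptic}. The only cosmetic difference is that the paper presents the two implications in the opposite order; your explicit remark that both $D^{2}\phi(x_{0})$ and $D^{2}u(x_{0})$ must be positive semi-definite before invoking \eqref{semi-elliptic} is a point the paper leaves implicit.
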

\begin{proof}
The proofs for all three cases are not easily unified. However, we can derive the results for the sub-solution case and the super-solution case separately, and combining these two results directly leads to the conclusion for the solution case. 
	
We first infer (ii) from (i). 
Let $u$ be a viscosity sub-solution (super-solution) to \eqref{equation}  over $\Omega$. For any $x_0\in\Omega$ and any $x\in U(x_0)$,  we take $\phi(x)=u(x)$. Since $u\in C^2(\Omega)$, there holds $\phi\in C^2(U(x_0))$. As a result of $\phi(x)=u(x)$,  
we have $\phi(x)-u(x)=0=\phi(x_0)-u(x_0)$ and therefore 
\[(\phi-u)(x)\geq\,(  \leq)\,(\phi-u)(x_0).\]
By Definition \ref{def:vis}, it follows that 
\begin{equation*}
	F\left(\lambda_1(D^2 \phi(x_0)),\cdots,\lambda_n(D^2 \phi(x_0))\right)\geq\,(\leq)\, g(x_0)(d(x_0))^{\alpha},
\end{equation*}
which together with $\phi(x)=u(x)$ implies that
\begin{equation*}
	F\left(\lambda_1(D^2 u(x_0)),\cdots,\lambda_n(D^2 u(x_0))\right)\geq\,( \leq)\, g(x_0)(d(x_0))^{\alpha}.
\end{equation*}
Since $x_0\in\Omega$ is arbitrary, we infer that $u$ is a classical sub-solution (super-solution) to \eqref{equation}  over $\Omega$.

From (ii) to (i), we let $u$ to be a classical sub-solution (super-solution) to \eqref{equation}  over $\Omega$. Then for any $x_0\in\Omega$, we have
\begin{equation*}
	F\left(\lambda_1(D^2 u(x_0)),\cdots,\lambda_n(D^2 u(x_0))\right)\geq(\leq)\, g(x_0)(d(x_0))^{\alpha}.
\end{equation*} 
For any convex function $\phi\in C^2(U(x_0))$ satisfying
\[(\phi-u)(x)\geq(\leq)\,(\phi-u)(x_0),\ \ \forall x\in U(x_0),\]
it is clear that $\phi-u$ takes its minimum (maximum) at $x_0$, which yields
$$D(\phi-u)\big|_{x_0}=0,\ \ D^2(\phi-u)\big|_{x_0}\geq(\leq)\,0.$$
Due to \eqref{semi-elliptic}, $F(\lambda_1,\cdots,\lambda_n)$ is semi-elliptic, we further obtain
$$D\phi(x_0)=D u(x_0),\ \ F\left(\lambda_1(D^2 \phi(x_0)),\cdots,\lambda_n(D^2 \phi(x_0))\right)\geq(\leq)\, F\left(\lambda_1(D^2 u(x_0)),\cdots,\lambda_n(D^2 u(x_0))\right).$$
It then follows that 
\[F\left(\lambda_1(D^2 \phi(x_0)),\cdots,\lambda_n(D^2 \phi(x_0))\right)\geq(\leq)\, F\left(\lambda_1(D^2 u(x_0)),\cdots,\lambda_n(D^2 u(x_0))\right)\geq  (\leq)\, g(x_0)(d(x_0))^{\alpha},\]
which means that $u$ is a  viscosity sub-solution (super-solution) to \eqref{equation}  over $\Omega$.

 The proof of this lemma is now complete. 
\end{proof}

	\begin{Lemma}[Comparison Principle]\label{lemma1}
  If $u\in C(\overline{\Omega})$ is a viscosity solution  to \eqref{equation}  over $\Omega$ and $W(x)\in C^2(\Omega)\cap C(\overline{\Omega})$ is convex function satisfying 
\begin{align*}
		F\left(\lambda_1(D^2 W(x)),\cdots,\lambda_n(D^2 W(x))\right)&>(<)\, g(x)(d(x))^{\alpha},\ \ \ \  \forall x\in \Omega,\stepcounter{equation}\tag{\theequation}\label{CP-1}\\
		W(x)&\leq(\geq)\, u(x),\ \ \ \  \forall x\in \partial\Omega,\stepcounter{equation}\tag{\theequation}\label{CP-2}
\end{align*}
then 
\begin{equation}\label{CP-3}
W(x)\leq(\geq)\, u(x), \ \ \ \  \forall x\in\overline{\Omega}.
\end{equation}
Here, based on Definition \ref{def:class},   the condition \eqref{CP-1} is also equivalent to the condition that 
$W$ is a  classically strict  sub-solution (super-solution) to \eqref{equation} over $\Omega$.
\end{Lemma}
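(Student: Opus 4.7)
I would argue by contradiction for each of the two cases in turn, reducing the comparison to a direct application of the viscosity definition of $u$ at an interior extremum of $W-u$. The strict inequality in \eqref{CP-1} is exactly what makes the contradiction go through, so no perturbation of $W$ is required.

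\emph{Strict sub-solution case.} Suppose $W$ satisfies the $>$ version of \eqref{CP-1} and $W\leq u$ on $\partial\Omega$, but $W(x)>u(x)$ at some point of $\overline{\Omega}$. Since $W-u\in C(\overline{\Omega})$ is nonpositive on $\partial\Omega$, the quantity $c:=\max_{\overline{\Omega}}(W-u)>0$ is attained at an interior point $x_0\in\Omega$. Picking any open neighborhood $U(x_0)\subset\Omega$ (which exists since $\Omega$ is open), one has
\[(W-u)(x)\leq (W-u)(x_0),\qquad \forall x\in U(x_0).\]
Thus $\phi:=W$, being convex by hypothesis on $W$ and of class $C^2(U(x_0))$, is a valid test function in the viscosity super-solution clause of Definition \ref{def:vis} applied to $u$ at $x_0$. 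That clause yields
\[F\bigl(\lambda_1(D^2 W(x_0)),\cdots,\lambda_n(D^2 W(x_0))\bigr)\leq g(x_0)(d(x_0))^{\alpha},\]
which contradicts the strict sub-solution inequality in \eqref{CP-1} at $x_0$. Hence $W\leq u$ throughout $\overline{\Omega}$.

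\emph{Strict super-solution case.} The argument is verbatim symmetric. If $\max_{\overline{\Omega}}(u-W)>0$, the boundary hypothesis $W\geq u$ on $\partial\Omega$ forces that maximum to be attained at some interior $x_0\in\Omega$, and then $(W-u)(x)\geq (W-u)(x_0)$ on a small $U(x_0)\subset\Omega$. Plugging the convex $C^2$ function $\phi:=W$ into the viscosity sub-solution clause of Definition \ref{def:vis} produces
\[F\bigl(\lambda_1(D^2 W(x_0)),\cdots,\lambda_n(D^2 W(x_0))\bigr)\geq g(x_0)(d(x_0))^{\alpha},\]
contradicting the $<$ version of \eqref{CP-1}.

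\emph{Where the subtlety lies.} There is no serious analytic obstacle, but two points deserve attention. First, the strictness in \eqref{CP-1} is essential: the viscosity condition at $x_0$ only delivers a non-strict reverse inequality, so a merely non-strict hypothesis would not close the argument without further work (the usual comparison principle for non-strict sub/super-solutions needs Jensen-type approximations or the doubling-of-variables device, and moreover depends on stronger structure than our semi-ellipticity in $(\mathbf{A_2})$). Second, $W$ itself — not a shift of it — may be used as the test function, because $W$ is already both $C^2(\Omega)$ and convex, which are precisely the two structural restrictions on $\phi$ in Definition \ref{def:vis}; this is what makes the proof essentially immediate.
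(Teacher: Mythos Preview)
Your proof is correct and follows essentially the same approach as the paper: argue by contradiction, locate an interior extremum of $W-u$ using the boundary inequality, and then use $W$ itself as the $C^2$ convex test function in the appropriate clause of Definition~\ref{def:vis} to obtain a non-strict inequality that contradicts the strict hypothesis~\eqref{CP-1}. Your additional remarks on why strictness is essential and why $W$ may be used directly as the test function are accurate and well placed.
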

\begin{proof}
	To prove \eqref{CP-3}, 
it suffices to prove that $$\displaystyle\max_{x\in\overline{\Omega}}\,(W-u)(x)\leq 0\ \ (\text{ or }\displaystyle\min_{x\in\overline{\Omega}}\,(W-u)(x)\geq 0).$$
The proof is by contradiction. 
Suppose there exists a point  $x_0\in\overline{\Omega}$ such that
\begin{equation*}
\max_{x\in\overline{\Omega}}\,(W-u)(x)=(W-u)(x_0)> 0\ \ (\text{ or }\min_{x\in\overline{\Omega}}\,(W-u)(x)=(W-u)(x_0)< 0).
\end{equation*}   
Due to \eqref{CP-2}, we have  $(W-u)\big|_{\partial\Omega}\leq(\geq)\, 0$, and hence $x_0\in\Omega$. Now $x_0$ is the maximum (minimum) point of $(W-u)(x)$ in the interior of $\Omega$. Since   $u$ is a viscosity solution to \eqref{equation} over $\Omega$, then $u$ is also a viscosity super-solution (sub-solution) to \eqref{equation} over $\Omega$. 
Given that $W\in C^2(\Omega)$, by Definition \ref{def:vis}, we can further derive
\begin{equation*}
	F\left(\lambda_1(D^2 W(x_0)),\cdots,\lambda_n(D^2 W(x_0))\right) \leq(\geq)\,   g(x_0)(d(x_0))^{\alpha},
\end{equation*}
which contradicts our condition \eqref{CP-1}. We have thus completed the proof of the lemma.
\end{proof}

\subsection{Construction of approximation functions}

 In this subsection, we   construct a family of Monge-Amp\`ere equations with continuous boundary data, 
 generating a family of approximation functions via their solutions.

\begin{Lemma}[Approximation function  $\psi_k$ over $\overline{\Omega}$]\label{lemma:approximation}
Let $\Omega$ be a bounded and  strictly convex domain in $\mathbb{R}^n$.	Let $\varphi$ satisfy the assumption $(\mathbf{A_4})$. Then there exists a family of convex functions $\{\psi_k\}_{k\in\mathbb{N}^+}$  such that 
	\begin{enumerate}[(i)] 
		\item $\psi_k\in C^\infty(\Omega)\cap C(\overline{\Omega})$;
		\item $\psi_k\big|_{\partial\Omega}=\varphi$;
		\item  $\psi_k$ converges locally  uniformly to $\psi$ over $\Omega$.
	\end{enumerate}
\end{Lemma}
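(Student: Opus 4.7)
The plan is to obtain $\psi_k$ by perturbing \eqref{eq:MA-1} with a small positive right-hand side. For each $k \in \mathbb{N}^+$, let $\psi_k \in C(\overline{\Omega})$ denote the unique convex Alexandrov solution of
\begin{equation*}
    \det D^2 \psi_k = \tfrac{1}{k}\ \ \text{in }\Omega,\qquad \psi_k = \varphi\ \ \text{on }\partial\Omega,
\end{equation*}
whose existence and continuity up to the boundary are supplied by Theorem 2.14 of \cite{Figalli} (applicable since $(\mathbf{A_1})$ yields strict convexity of $\Omega$ and $(\mathbf{A_4})$ gives $\varphi \in C(\partial\Omega)$). Because the right-hand side is a positive smooth constant, Caffarelli's interior regularity theory forces strict convexity of $\psi_k$ on every compactly contained subset of $\Omega$, so $\psi_k \in C^{1,\alpha}_{\mathrm{loc}}(\Omega)$, and a Schauder bootstrap upgrades this to $\psi_k \in C^{\infty}(\Omega)$; together with the prescribed boundary values this delivers (i) and (ii) at once.

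For (iii) I will sandwich the family between two explicit barriers and pass to the limit. The Monge-Amp\`ere comparison principle applied with matching boundary data to $\det D^2 \psi_k = \tfrac{1}{k} > 0 = \det D^2 \psi$ yields the upper barrier $\psi_k \leq \psi$ on $\overline{\Omega}$, and the same argument with $\tfrac{1}{k} > \tfrac{1}{k+1}$ yields the monotonicity $\psi_k \leq \psi_{k+1}$. For a lower barrier, fix $x_0 \in \Omega$ and $R > 0$ with $\Omega \subset B_R(x_0)$ and set
\begin{equation*}
    v_k(x) := \varphi_*(x) + \tfrac{1}{2}\,k^{-1/n}\bigl(|x-x_0|^2 - R^2\bigr).
\end{equation*}
Then $v_k$ is convex, $v_k \leq \varphi = \psi_k$ on $\partial\Omega$, and the Brunn--Minkowski-type inequality $M[u+w]^{1/n} \geq M[u]^{1/n} + M[w]^{1/n}$ for Monge-Amp\`ere measures of convex functions (applied with $u = \varphi_*$, so $M[u] \geq 0$, and $w$ the smooth quadratic with $M[w] = \tfrac{1}{k}\,dx$) gives $M[v_k] \geq \tfrac{1}{k}\,dx$ in the Alexandrov sense. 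Comparison therefore forces $v_k \leq \psi_k$ on $\overline{\Omega}$, and since $v_k \to \varphi_*$ uniformly on $\overline{\Omega}$, the family $\{\psi_k\}$ is uniformly bounded and squeezed between two continuous functions that agree with $\varphi$ on $\partial\Omega$.

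Writing $\widetilde\psi(x) := \lim_k \psi_k(x)$ pointwise on $\overline{\Omega}$, the limit is convex with $\varphi_* \leq \widetilde\psi \leq \psi$ on $\overline{\Omega}$ and $\widetilde\psi = \varphi$ on $\partial\Omega$; the squeeze (together with continuity of convex functions on the interior) yields $\widetilde\psi \in C(\overline{\Omega})$. Weak continuity of the Monge-Amp\`ere measure under monotone convergence of convex functions gives $M[\widetilde\psi] = \lim_k M[\psi_k] = 0$, so $\widetilde\psi$ solves \eqref{eq:MA-1}, and uniqueness in Theorem 2.14 of \cite{Figalli} forces $\widetilde\psi = \psi$. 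Dini's theorem, applied to the monotone sequence of continuous functions $\psi_k$ with continuous limit $\psi$ on any compact subset of $\Omega$, then promotes pointwise convergence to locally uniform convergence, proving (iii). The main technical delicacy is verifying the Alexandrov subsolution inequality $M[v_k] \geq \tfrac{1}{k}\,dx$ when $\varphi_*$ is only continuous and convex; once this is handled via Brunn--Minkowski on subdifferentials, the remaining arguments are routine applications of the Monge-Amp\`ere Perron/comparison machinery.
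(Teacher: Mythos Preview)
Your construction is identical to the paper's: both take $\psi_k$ to be the Alexandrov solution of $\det D^2\psi_k=\tfrac1k$ in $\Omega$ with $\psi_k=\varphi$ on $\partial\Omega$, and both obtain (i)--(ii) from existence plus interior regularity for Monge--Amp\`ere with smooth positive right-hand side.

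The only divergence is in (iii). The paper dispatches the convergence in one line by invoking the stability result Proposition~2.16 in \cite{Figalli}: since $\tfrac1k\,dx\rightharpoonup^* 0$, the solutions converge locally uniformly to the solution $\psi$ of \eqref{eq:MA-1}. You instead reprove this special case by hand, using Alexandrov comparison to get the monotone sandwich $v_k\le\psi_k\le\psi$, weak continuity of the Monge--Amp\`ere measure to identify the limit, and Dini's theorem to upgrade to locally uniform convergence. Your route is longer but more self-contained; the paper's is shorter but relies on an external black box that itself is proved by essentially the same comparison-and-compactness mechanism. One small remark: your Brunn--Minkowski justification for $M[v_k]\ge\tfrac1k\,dx$ is slightly informal as written (the inequality $M[u+w]^{1/n}\ge M[u]^{1/n}+M[w]^{1/n}$ is really a pointwise statement about densities via the Minkowski determinant inequality, not a statement about measures); the cleaner route is to note that $D^2 v_k\ge k^{-1/n}I$ in the Alexandrov sense, whence the absolutely continuous part of $M[v_k]$ already dominates $\tfrac1k\,dx$. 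With that clarification your argument is correct.
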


\begin{proof}

For any $k\in\mathbb{N}^+$,  let  $\psi_k$ be the solution to  the nonhomogeneous Dirichlet problem
\begin{equation*} 
	\begin{split}
		\det D^2\psi_k&=\tfrac{1}{k}\ \ \text{ in }\Omega,\\
		\psi_k&=\varphi\ \ \text{ on }\partial\Omega.
	\end{split}
\end{equation*}
Since the constant $\frac{1}{k}$  can be considered  a smooth function, then we obtain that 
$\psi_k\in C^\infty(\Omega)\cap C(\overline{\Omega})$ is a convex function. It is clear that $\frac{1}{k}\to 0$ as $k\to\infty$, and hence for Borel measures, we have $\frac{1}{k}dx\to^* 0dx$  as $k\to\infty$. By the stability result for Monge-Amp\`ere equations with continuous boundary data (see Proposition 2.16 in  \cite{Figalli}),  it follows that 
$\psi_k$ converges  locally uniformly  in $\Omega$ to the unique solution $\psi$ of the  Dirichlet problem \eqref{eq:MA-1}.  Therefore, we have  proved the lemma.
\end{proof}

\begin{Remark}\label{remark:phi-k}
	For any $k\in\mathbb{N}^+$, 
since $\psi_k$ constructed in Lemma \ref{lemma:approximation} is a smooth convex function, then
\begin{equation*}
\text{$D^2\psi_k$ is a positive semi-definite matrix, i.e. $D^2\psi_k\geq 0$.}
\end{equation*}  
Moreover, it follows that for any nonzero vector $\bm{\xi}$, we have $\bm{\xi}^TD^2\psi_k\,\bm{\xi}\geq 0$.
\end{Remark}

\subsection{Distance function estimates}

For $\mu>0$, let us set $$\Gamma_{\mu}=\{x\in\overline{\Omega}:\ d(x)<\mu\}.$$ 

As a  consequence of 
 Lemma 14.16 and  Lemma 14.17 in \cite{G-T}, we have the following lemma that relates the $C^2$ smoothness of $d(x)$ in $\Gamma_{\mu}$ to that of the boundary $\partial\Omega$ and expresses
 the gradient and Hessian  of $d(x)$ at points close to $\partial\Omega$.

 \begin{Lemma}[Near boundary properties of distance function $d(x)$]\label{lemma:GT}
 	Under the assumption $(\mathbf{A_1})$, we have the following conclusions:  
 	\begin{enumerate}[(i)]
 		\item There exists some constant $\mu_0=\mu_0(\Omega)>0$
 		such that
 		\begin{equation*}
 			d(x)\in C^2(\Gamma_{\mu_0});
 		\end{equation*}
 		\item Let $x_0\in\Gamma_{\mu_0}$ and   $y_0\in\partial\Omega$ satisfy $d(x_0)=|x_0-y_0|$. Then by choosing some suitable coordinate system, the gradient and Hessian  of $d(x)$ at the point $x_0$ can be expressed as 
 		\begin{align*}
 		&Dd(x_0)=(\underbrace{0,\cdots,0}_{n-1},1),\\
 		&D^2d(x_0)=	 
 		\operatorname{diag}	\left(\frac{-\kappa_1(y_0)}{1-\kappa_1(y_0)d(x_0)},\cdots,	\frac{-\kappa_{n-1}(y_0)}{1-\kappa_{n-1}(y_0)d(x_0)} , 0\right),
 		\end{align*}
 	where $\kappa_1(y_0),\cdots,\kappa_{n-1}(y_0)$ represent all the $(n-1)$	principal curvatures of $\partial\Omega$ at $y_0$. 
 	\end{enumerate}
 \end{Lemma}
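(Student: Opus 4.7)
Since the lemma is explicitly attributed to Lemmas 14.16 and 14.17 of \cite{G-T}, my plan is to reduce everything to those two results, extracting precisely the pieces needed and interpreting the formulas in the principal coordinate system. I will not reprove the basic facts about the tubular neighborhood; instead I will show that the hypothesis $(\mathbf{A_1})$ supplies the smoothness required to invoke them.

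For part (i), I would first observe that $(\mathbf{A_1})$ asserts $\partial\Omega\in C^2$ and that the principal curvatures $\kappa_1,\dots,\kappa_{n-1}$ are bounded away from zero. In particular $\partial\Omega$ is compact and $C^2$, so by Lemma 14.16 of \cite{G-T} there exists $\mu_0=\mu_0(\Omega)>0$ such that for every $x\in\Gamma_{\mu_0}$ the nearest boundary point $y(x)\in\partial\Omega$ is unique and the map $x\mapsto y(x)$ is $C^1$; combined with the standard tubular neighborhood representation $x=y(x)+d(x)\,\nu(y(x))$, where $\nu$ is the inner unit normal, this yields $d\in C^2(\Gamma_{\mu_0})$. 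The uniform lower bound on the curvatures is convenient here because it controls the size of $\mu_0$ away from zero.

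For part (ii), fix $x_0\in\Gamma_{\mu_0}$ and let $y_0\in\partial\Omega$ be its (unique) nearest boundary point, so $d(x_0)=|x_0-y_0|$. Choose a coordinate system with origin at $y_0$, with $e_n$ pointing into $\Omega$ along the inner normal $\nu(y_0)$, and with $e_1,\dots,e_{n-1}$ chosen to diagonalize the second fundamental form of $\partial\Omega$ at $y_0$, so that the corresponding principal curvatures are $\kappa_1(y_0),\dots,\kappa_{n-1}(y_0)$. In these coordinates $x_0=(0,\dots,0,d(x_0))$. The gradient formula $Dd(x_0)=(0,\dots,0,1)$ is then immediate from the identity $x=y(x)+d(x)\,\nu(y(x))$ differentiated at $x_0$: projecting onto the normal direction gives $\partial d/\partial x_n=1$, while differentiating along tangential directions shows the tangential components of $Dd$ vanish. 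The Hessian formula is exactly the content of the computation in Lemma 14.17 of \cite{G-T}: in the principal coordinate frame the matrix $D^2 d(x_0)$ is diagonal, its normal–normal entry vanishes since $|Dd|\equiv 1$ forces $(D^2 d)\,Dd=0$, and its $i$-th tangential entry equals $-\kappa_i(y_0)/(1-\kappa_i(y_0)d(x_0))$, which is the standard expression arising from differentiating the shape operator along the normal flow.

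The main obstacle, and essentially the only nontrivial point, is ensuring the formula for the tangential eigenvalues of $D^2 d$ is justified cleanly. I would handle this by parametrizing $\partial\Omega$ near $y_0$ as a graph $x_n=\rho(x')$ with $\rho(0)=0$, $D\rho(0)=0$, and $D^2\rho(0)=-\operatorname{diag}(\kappa_1(y_0),\dots,\kappa_{n-1}(y_0))$ (the sign reflecting that $e_n$ points inward), then writing the implicit equation $|x-y(x)|^2=d(x)^2$ with $y(x)\in\partial\Omega$, differentiating twice at $x_0$, and reading off the diagonal entries. Shrinking $\mu_0$ if necessary (so that $\kappa_i(y_0)\,d(x_0)<1$ uniformly, which is possible because the principal curvatures are bounded above by the $C^2$ norm of $\partial\Omega$) guarantees that the denominators $1-\kappa_i(y_0)d(x_0)$ are strictly positive, so the formula is well defined. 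This completes the verification of both (i) and (ii).
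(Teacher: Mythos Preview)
Your approach is essentially the same as the paper's: both reduce the lemma to the tubular-neighborhood computation recorded as Lemmas 14.16 and 14.17 in \cite{G-T}, working in a principal coordinate system at $y_0$. The paper simply gives a more self-contained recap of that computation (defining the map $H(y',t)=y+tN(y)$, checking its Jacobian is nonsingular, and applying the inverse function theorem), whereas you defer more directly to the reference.

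Two small points are worth noting. First, you assert that $\partial\Omega$ is compact; the paper is careful not to assume this, since $(\mathbf{A_1})$ does not include boundedness of $\Omega$. Instead, the paper applies the Bonnet--Myers theorem to the hypersurface $\partial\Omega$ (using the uniform positive lower bound on the principal curvatures to bound the Ricci curvature below) to conclude that $\partial\Omega$ is compact and hence $\Omega$ is bounded. Only then can Lemma 14.16 of \cite{G-T} be invoked. Second, your sign convention $D^2\rho(0)=-\operatorname{diag}(\kappa_1,\dots,\kappa_{n-1})$ is off: with $e_n$ the inward normal and $\Omega$ convex, the graph function is convex, so $D^2\rho(0)=+\operatorname{diag}(\kappa_1,\dots,\kappa_{n-1})$, matching the paper's $D^2h(y')$. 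This does not affect the final formula once handled consistently.
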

 \begin{proof}
Since $\Omega$ is a uniformly convex domain having non-empty boundary $\partial\Omega$, we know that $\Omega$ is a bounded domain. In fact, $\partial\Omega$ is an $(n-1)$-dimensional hypersurface, and 
all the $(n-1)$ principal curvatures of $\partial\Omega$ are bounded from below by a positive constant, which we take to be $c$ at present. Then for any unit tangent vector $X$, its Ricci curvature has the following positive lower bound:
\[\operatorname{Ric}(X,X)\geq (n-2)c^2.\]
Using the Bonnet–Myers theorem, we infer that $\partial\Omega$ is compact and its diameter satisfies
\[\operatorname{diam}(\partial\Omega):=\sup_{p,q\in\partial\Omega}\operatorname{dist}(p,q)\leq\frac{\pi}{c},\]
which together with the uniform convexity of $\Omega$ further implies that $\Omega$ is bounded.

The fact that $\Omega$ is bounded and of class $C^2$ yields that $\partial\Omega$ satisfies the following uniform interior sphere condition: there exists a constant $\mu_0=\mu_0(\Omega)>0$ such that  for any $y_0\in\partial\Omega$, there is a ball $B(y_0,r)$ with radius $r\geq \mu_0$ satisfying $\overline{B(y_0,r)}\cap (\mathbb{R}^n\setminus\Omega)=\{y_0\}$. Now it is clear  that $B(y_0,r)$ is tangent to $\partial\Omega$ at $y_0$ and lies entirely inside $\Omega$. Since all the principal curvatures of $\partial B(y_0,r)$ at $y_0$ are  $\frac{1}{r}$, then all the principal curvatures of $\partial\Omega$ at $y_0$ satisfy
\begin{equation}\label{eq:14.96''}
\kappa_i(y_0) \leq\frac{1}{r}\leq \frac{1}{\mu_0},\ \ \ \ \forall i=1,\cdots,n-1.
\end{equation}

Due to the $C^2$ regularity  of $\partial\Omega$ , for any $x\in\Gamma_{\mu_0}$, there exists a unique $y\in\partial\Omega$ such that $d(x)=|x-y|$. Let $N(y)$ and $T(y)$ denote respectively the unit inward normal to $\partial\Omega$ at $y$ 
 and the tangent hyperplane to $\partial\Omega$ at $y$. Let $\kappa_1(y),\cdots,\kappa_{n-1}(y)$ represent all the $(n-1)$	principal curvatures of $\partial\Omega$ at $y$. 
Now we choose a principal coordinate system at $y\in\partial\Omega$.  By a rotation of coordinates, we can assume that the $x_n$ coordinate axis lies in the direction $N(y)$, and the $x_1,\cdots,x_{n-1}$ coordinate axes lie along principal directions corresponding to $\kappa_1(y),\cdots,\kappa_{n-1}(y)$ at $y$. In fact, in some neighborhood $\mathcal{U}(y)$ of $y$, $\partial\Omega$ is  given by $x_n=h(x')$, where $x'=(x_1,\cdots,x_{n-1})$, $h\in C^2(T(y)\cap\mathcal{U}(y))$, 
$Dh(y')=0$ and 
\begin{equation}\label{eq:D2h}
D^2h(y')=
\operatorname{diag}(\kappa_1(y),\cdots, \kappa_{n-1}(y)).
\end{equation}
 The unit inward normal vector $\overline{N}(y')=N(y)$ at  $y=(y',h(y'))$ is then given by 
 \begin{align*}
 	&	N_i(y)=\frac{-D_ih(y')}{\sqrt{1+|Dh(y')|^2}},\ \ \forall i=1,\cdots,n-1,\stepcounter{equation}\tag{\theequation}\label{eq:Ni}\\
 	&N_n(y)=\frac{1}{\sqrt{1+|Dh(y')|^2}}.
 \end{align*}
 Hence  we have 
 \begin{equation*} 
 	D_j\overline{N}_i(y')=-\kappa_i(y)\delta_{ij},\ \ \forall i,j=1,\cdots,n-1.
 \end{equation*}

  We note that  $x$ is the closest point to $y$ in $\Omega$ and must lie along the inward normal direction as follows:
\begin{equation}\label{eq:14.96}
	x=y+t N(y),
\end{equation}
where $t\in\mathbb{R}$. For a fixed point $x_0\in \Gamma_{\mu_0}$, we let $y_0=y(x_0)$.  
Define a mapping $H=(H_1,\cdots,H_n):\,(T(y_0)\cap \mathcal{U}(y_0))\times\mathbb{R}\to\mathbb{R}^n$ by 
 \begin{equation*}
 H(y',t)=y+t N(y),\ \ \ \ y=(y',h(y')).
 \end{equation*}
 Clearly, $y_0\in\partial\Omega$ and $H\in C^1((T(y_0)\cap \mathcal{U}(y_0))\times\mathbb{R})$. 
Due to \eqref{eq:D2h} and \eqref{eq:Ni}, the gradient of $H$ at $(y_0',d(x_0))$ is given by
 \begin{equation*} 
 DH=\operatorname{diag}(1-\kappa_1(y_0)d(x_0),\cdots, 1-\kappa_{n-1}(y_0)d(x_0),1).
 \end{equation*} 
 Since $x_0\in\Gamma_{\mu_0}$, we also notice that 
\begin{equation}\label{eq:14.96'}
	d(x_0)<\mu_0.
\end{equation} 
By virtue of \eqref{eq:14.96''} and \eqref{eq:14.96'}, it follows that  the Jacobian of $H$ at $(y_0',d(x_0))$ is positive:
 \begin{equation*} 
 \det	DH=\left(1-\kappa_1d(x_0)\right)\cdots \left(1-\kappa_{n-1}d(x_0)\right)>0.
 \end{equation*}
Thus using the inverse mapping theorem implies that  $y'\in C^1(\mathcal{V}(x_0))$ for some neighborhood $\mathcal{V}(x_0)$, and by \eqref{eq:14.96}, there holds
 \[Dd(x)=N(y(x))=\overline{N}(y'(x))\in C^1(\mathcal{V}(x_0)),\ \ \forall x\in\mathcal{V}(x_0).\]
 Hence, $d\in C^2(\mathcal{V}(x_0))$ and thus we can derive $d\in C^2(\Gamma_{\mu_0})$, which leads us to (i).
 
We note that the gradient of $H^{-1}$ at $x_0$ is given by
 \begin{equation*} 
 	DH^{-1}=\operatorname{diag}\Big(\frac{1}{1-\kappa_1(y_0)d(x_0)},\cdots, \frac{1}{1-\kappa_{n-1}(y_0)d(x_0)},1\Big).
 \end{equation*} 
Now it is clear that 
 	\begin{align*}
 	&Dd(x_0)=N(y_0)=(\underbrace{0,\cdots,0}_{n-1},1),\\
 	&D_{in}d(x_0)=0,\ \ \forall i=1,\cdots,n,\\
 	&D_{ij}d(x_0)=D_jN_i\circ y(x_0)=D_l\overline{N}_i(y_0)D_jy_l(x_0)=\frac{-\kappa_i(y_0)\delta_{ij}}{1-\kappa_i(y_0)d(x_0)},\ \ \forall i,j=1,\cdots,n-1.
 \end{align*} 
This yields (ii) immediately. The proof of this lemma is now finished.
 \end{proof}

Based on Lemma \ref{lemma:GT}, 
though $d(x)$ is not uniformly convex near $\partial\Omega$, we can 
 show that $$-4\operatorname{diam}(\Omega)\cdot d(x)+(d(x))^2$$
is  uniformly convex near $\partial\Omega$. Our construction of this function is inspired by \cite{D-Z}, and   a detailed analysis of the distance function to a smooth hypersurface can be found in Section 5 therein.
\begin{Lemma}[Near boundary properties of function $-4\operatorname{diam}(\Omega)\cdot d(x)+(d(x))^2$]\label{lemma:inspiration}
Under the assumption $(\mathbf{A_1})$, there exists some constant $\mu_0=\mu_0(\Omega)>0$
 such that
 the function
$$-4\operatorname{diam}(\Omega)\cdot d(x)+(d(x))^2$$
is $C^2$ and uniformly convex over $\Gamma_{\mu_0}$. 
\end{Lemma}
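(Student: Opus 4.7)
The plan is direct computation of the Hessian of $w(x) := -4\operatorname{diam}(\Omega)\cdot d(x)+(d(x))^2$ in the tubular neighborhood provided by Lemma \ref{lemma:GT}, then verifying pointwise positive-definiteness with a uniform lower bound.

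First I would invoke Lemma \ref{lemma:GT}(i) to get some $\mu_0=\mu_0(\Omega)>0$ with $d\in C^2(\Gamma_{\mu_0})$, which immediately yields $w\in C^2(\Gamma_{\mu_0})$. For any fixed $x_0\in\Gamma_{\mu_0}$, the product and chain rules give
\begin{equation*}
D^2 w(x_0)=\bigl(-4\operatorname{diam}(\Omega)+2d(x_0)\bigr)\,D^2 d(x_0)+2\,Dd(x_0)\otimes Dd(x_0).
\end{equation*}
Pick the principal coordinate system at $x_0$ furnished by Lemma \ref{lemma:GT}(ii), with associated boundary point $y_0$ and principal curvatures $\kappa_1(y_0),\dots,\kappa_{n-1}(y_0)$. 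Then $D^2 d(x_0)$ and $Dd(x_0)\otimes Dd(x_0)$ are both diagonal in these coordinates, so $D^2 w(x_0)$ is diagonal with $n$-th entry $2$, and tangential entries
\begin{equation*}
\bigl(-4\operatorname{diam}(\Omega)+2d(x_0)\bigr)\cdot\frac{-\kappa_i(y_0)}{1-\kappa_i(y_0)d(x_0)},\qquad i=1,\dots,n-1.
\end{equation*}

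The main — and only — non-routine step is to see that these tangential entries are bounded below by a positive constant independent of $x_0$. I would shrink $\mu_0$ if necessary so that $\mu_0<\operatorname{diam}(\Omega)$, which forces $-4\operatorname{diam}(\Omega)+2d(x_0)\le -2\operatorname{diam}(\Omega)<0$. By uniform convexity in $(\mathbf{A_1})$ there is a constant $\kappa_{\min}>0$ with $\kappa_i(y)\ge\kappa_{\min}$ for all $y\in\partial\Omega$ and all $i$; combined with the upper bound $\kappa_i(y_0)\le 1/\mu_0$ derived in the proof of Lemma \ref{lemma:GT}, a further shrinking of $\mu_0$ keeps $1-\kappa_i(y_0)d(x_0)$ uniformly positive and bounded above. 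Hence $\tfrac{-\kappa_i(y_0)}{1-\kappa_i(y_0)d(x_0)}$ is negative with $|\,\cdot\,|$ bounded below by $\kappa_{\min}$, so each tangential diagonal entry is at least $2\operatorname{diam}(\Omega)\cdot\kappa_{\min}$.

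Combining, $D^2 w(x_0)\ge c\,I$ with $c=\min\{2,\,2\operatorname{diam}(\Omega)\,\kappa_{\min}\}>0$, uniformly in $x_0\in\Gamma_{\mu_0}$, which is exactly uniform convexity of $w$ over $\Gamma_{\mu_0}$. The only subtlety I would flag is the interplay in choosing $\mu_0$: it must simultaneously be small enough for Lemma \ref{lemma:GT} to apply, for $-4\operatorname{diam}(\Omega)+2d(x_0)$ to stay bounded away from zero, and for $1-\kappa_i(y_0)d(x_0)$ to stay bounded away from zero; all three conditions depend only on $\Omega$, so a single choice of $\mu_0=\mu_0(\Omega)$ suffices.
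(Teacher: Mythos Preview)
Your proposal is correct and follows essentially the same route as the paper: compute the Hessian of $w$, diagonalize via Lemma~\ref{lemma:GT}(ii), and use the uniform curvature bounds together with a shrunk $\mu_0$ to get a uniform positive lower bound on the tangential eigenvalues while the normal eigenvalue is identically $2$. The only cosmetic difference is that the paper writes the Hessian as $(4\operatorname{diam}(\Omega)-2d(x_0))(-D^2 d(x_0))+2Dd(x_0)\otimes Dd(x_0)$ and notes $d(x_0)\le\operatorname{diam}(\Omega)$ directly rather than shrinking $\mu_0$ below $\operatorname{diam}(\Omega)$, but the resulting bound $\min\{2,\,2\operatorname{diam}(\Omega)\,\kappa_{\min}\}$ is identical.
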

\begin{proof}
Here, we adopt the notations used in the proof of Lemma \ref{lemma:GT}. 
	
By virtue of Lemma \ref{lemma:GT} (i), there exists $\mu_0=\mu_0(\Omega)>0$ such that $d(x)\in C^2(\Gamma_{\mu_0})$. Hence we have
$$-4\operatorname{diam}(\Omega)\cdot d(x)+(d(x))^2\in C^2(\Gamma_{\mu_0}).$$
For any $x_0\in\Gamma_{\mu_0}$, direct computations lead us to
\begin{align*}
&D\left( -4\operatorname{diam}(\Omega)\cdot d(x)+(d(x))^2\right)\big|_{x=x_0}=-4\operatorname{diam}(\Omega)\cdot Dd(x_0)+2d(x_0)\cdot Dd(x_0),\\
&D^2\left( -4\operatorname{diam}(\Omega)\cdot d(x)+(d(x))^2\right)\big|_{x=x_0}=\left(4\operatorname{diam}(\Omega)-2d(x_0)\right)\cdot \left(-D^2d(x_0)\right)+2Dd(x_0)\otimes Dd(x_0).
\end{align*}

Using Lemma \ref{lemma:GT} (ii), we note that 
\begin{align*}
&	-D^2d(x_0)=	\operatorname{diag}	\left(\frac{\kappa_1(y_0)}{1-\kappa_1(y_0)d(x_0)},\cdots,	\frac{\kappa_{n-1}(y_0)}{1-\kappa_{n-1}(y_0)d(x_0)} , 0\right),\\
&Dd(x_0)\otimes Dd(x_0)=\operatorname{diag}(\underbrace{0,\cdots,0}_{n-1},1).
\end{align*}
In view of  
the fact that $\partial\Omega$ is uniformly convex and $C^2$, 
there exist constants $\overline{m_0}$ and $\overline{M_0}$ satisfying  $0<\overline{m_0}\leq\overline{M_0}$ such that
\[\overline{m_0}\leq\kappa_i(y_0)\leq\overline{M_0},\ \ \ \forall i=1,\cdots,n-1. \]
Now we take $\mu_0'=\min\{\mu_0,\frac{1}{2\overline{M_0}}\}$ and still denote it as $\mu_0$. Then for any $x_0\in\Gamma_{\mu_0}$, we have
\[1-\kappa_i(y_0)\cdot d(x_0)\geq 1-\overline{M_0}\cdot \mu_0>\frac{1}{2}>0, 
\]
and hence 
\[\frac{\kappa_i(y_0)}{1-\kappa_i(y_0)\cdot d(x_0)} \geq 
\overline{m_0}>0. 
\]
It follows that 
\begin{align*}
		-D^2d(x_0)\geq 	\operatorname{diag}	\left(\overline{m_0},\cdots,	\overline{m_0} , 0\right),\ \ \ \ \forall x_0\in\Gamma_{\mu_0}.
\end{align*}

Based on the observation 
\[4\operatorname{diam}(\Omega)-2d(x_0)\in [2\operatorname{diam}(\Omega),4\operatorname{diam}(\Omega)],\]
we further derive 
\begin{align*}
&\ \ \ \ \left(4\operatorname{diam}(\Omega)-2d(x_0)\right)\cdot \left(-D^2d(x_0)\right)+2Dd(x_0)\otimes Dd(x_0)\\
&\geq \operatorname{diag}	\left(2\operatorname{diam}(\Omega)\cdot\overline{m_0},\cdots,	2\operatorname{diam}(\Omega)\cdot\overline{m_0} , 0\right)+\operatorname{diag}	\left(0,\cdots,	0, 2\right)\\
&=\operatorname{diag}	\left(2\operatorname{diam}(\Omega)\cdot\overline{m_0},\cdots,	2\operatorname{diam}(\Omega)\cdot\overline{m_0} , 2\right),\ \ \ \ \forall x_0\in\Gamma_{\mu_0}.
\end{align*}
Up to now, we can deduce  
\begin{align*}
	D^2\left( -4\operatorname{diam}(\Omega)\cdot d(x)+(d(x))^2\right)\big|_{x=x_0}
	\geq \operatorname{diag}	\left(2\operatorname{diam}(\Omega)\cdot\overline{m_0},\cdots,	2\operatorname{diam}(\Omega)\cdot\overline{m_0} , 2\right),\ \  \ \forall x_0\in\Gamma_{\mu_0},
\end{align*}
which means that  $-4\operatorname{diam}(\Omega)\cdot d(x)+(d(x))^2$ is uniformly convex over $\Gamma_{\mu_0}$.  This concludes the proof of the lemma. 
\end{proof}

\begin{Lemma}[Extension function  $\phi_0$ over $\overline{\Omega}$]\label{lemma:uniform-convex}
	If $\Omega$ is a uniformly convex domain of class $C^2$, then there exists a function $\phi_0$ such that
	\begin{enumerate}[(i)]
		\item  $\phi_0\in C^2(\overline{\Omega})$;
		\item $\phi_0$ is uniformly convex over $\overline{\Omega}$;
		\item $\phi_0\big|_{\partial\Omega}=0$.
	\end{enumerate}
\end{Lemma}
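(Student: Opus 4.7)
The plan is to take $\phi_0$ to be the unique convex solution of the Dirichlet problem
\[\det D^2 \phi_0 = 1 \ \text{ in } \Omega, \qquad \phi_0 = 0 \ \text{ on } \partial\Omega,\]
whose existence follows from Figalli's Theorem 2.14 (already used in Lemma \ref{lemma:approximation} with a smooth positive right-hand side and continuous boundary data). Property (iii) is then automatic from the boundary condition.

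For (i), interior smoothness $\phi_0 \in C^\infty(\Omega)$ is classical (Caffarelli's interior regularity for Monge--Amp\`ere equations with smooth, strictly positive right-hand side combined with strict convexity of $\phi_0$). For the boundary $C^2$ regularity, I would use the function $-4\operatorname{diam}(\Omega) d(x) + (d(x))^2$ from Lemma \ref{lemma:inspiration} as an explicit barrier: suitable positive multiples of it dominate and are dominated by $\phi_0$ near $\partial\Omega$, which together with the uniform convexity of $\partial\Omega$ gives enough linear growth of $\phi_0$ in the inward normal direction to apply Caffarelli-type boundary Hessian estimates. For (ii), observe that $\det D^2 \phi_0 = \prod_{i=1}^n \lambda_i(D^2\phi_0) = 1$; once one has a uniform upper bound on $|D^2\phi_0|$ (from interior Pogorelov estimates combined with the boundary Hessian bounds just established), the product identity forces $\lambda_{\min}(D^2\phi_0) \geq c > 0$ uniformly, which is the uniform convexity.

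The hard part is obtaining $C^2$ regularity and uniform convexity simultaneously up to $\partial\Omega$. Tangentially, the uniform convexity of $\partial\Omega$ controls $D^2 \phi_0$ from below via the principal curvatures $\kappa_i$; the normal-direction eigenvalue of $D^2 \phi_0$ is the delicate one, and this is precisely the eigenvalue that the $d^2$ correction in Lemma \ref{lemma:inspiration} is designed to inflate. A more hands-on alternative would be to glue $-4\operatorname{diam}(\Omega) \tilde d(x) + \tilde d(x)^2$---where $\tilde d = \eta(d)$ is a smooth truncation of $d$ to cope with the interior cut locus---to a uniformly convex interior function, but this is technically delicate because convexity is not preserved under partition-of-unity gluings, which is why routing everything through the Monge--Amp\`ere equation is cleaner.
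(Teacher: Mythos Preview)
Your Monge--Amp\`ere route is genuinely different from what the paper does, and it has a real gap under the paper's exact hypothesis that $\partial\Omega$ is only $C^2$. The boundary regularity results you invoke---Caffarelli--Nirenberg--Spruck, Trudinger--Wang, Savin---all require $\partial\Omega\in C^{2,\alpha}$ for some $\alpha>0$ (or better) to conclude $\phi_0\in C^{2,\alpha}(\overline{\Omega})$; with a merely $C^2$ boundary the best one expects in general is $\phi_0\in C^{1,1}(\overline{\Omega})$, and continuity of $D^2\phi_0$ up to $\partial\Omega$ does not follow. The barrier from Lemma~\ref{lemma:inspiration} will give you gradient and, with more work, one-sided Hessian bounds, but that is $C^{1,1}$, not $C^2$. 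So the step ``apply Caffarelli-type boundary Hessian estimates'' does not close under the stated hypotheses. (If the paper had assumed $\partial\Omega\in C^{2,\alpha}$, your argument would go through and would be quite clean.)

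Ironically, the paper's proof is exactly the ``more hands-on alternative'' you describe and dismiss at the end: it simply asserts that the function $-4\operatorname{diam}(\Omega)\,d(x)+d(x)^2$, shown in Lemma~\ref{lemma:inspiration} to be $C^2$ and uniformly convex on the collar $\Gamma_{\mu_0}$, can be extended to a $C^2$ uniformly convex function on all of $\overline{\Omega}$, and takes that extension as $\phi_0$. Your worry that convexity is not preserved under partition-of-unity gluings is legitimate, and the paper offers no details on how the extension is carried out; but this direct route is more elementary in that it avoids borderline Monge--Amp\`ere regularity theory altogether.
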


 \begin{proof}
 	The function $-4\operatorname{diam}(\Omega)\cdot d(x)+(d(x))^2$ introduced in Lemma \ref{lemma:inspiration} can be extended from the near-boundary region $\Gamma_{\mu_0}$ to the interior region  $\overline{\Omega}\setminus\Gamma_{\mu_0}$ in such a way that the resulting function, denoted as $\phi_0$, is $C^2$ and uniformly convex throughout $\overline{\Omega}$.  Now $\phi_0$ serves as the desired function for this lemma.
 \end{proof}

By Lemma \ref{lemma:uniform-convex}, we have 
\begin{equation}\label{phi-0-1}
	\phi_0(x)<0,\ \ \forall x\in\Omega.
\end{equation}
Denote 
\begin{equation}\label{phi0-max}
	\eta_0:=\max_{x\in\overline{\Omega}}|\phi_0(x)|.
\end{equation}
Hence  $\eta_0>0$, and there always exists some point $x_0\in\Omega$ such that
\begin{equation}\label{phi-0-2}
	|\phi_0(x_0)|=\eta_0.
\end{equation} 
Now we are ready to derive the distance function estimate in the following lemma. For the $n=1$ case, this estimate can be easily depicted as   Figure \ref{fig-lemma}.

\begin{figure}[htbp]  
	\centering 
	\includegraphics[scale=0.32]{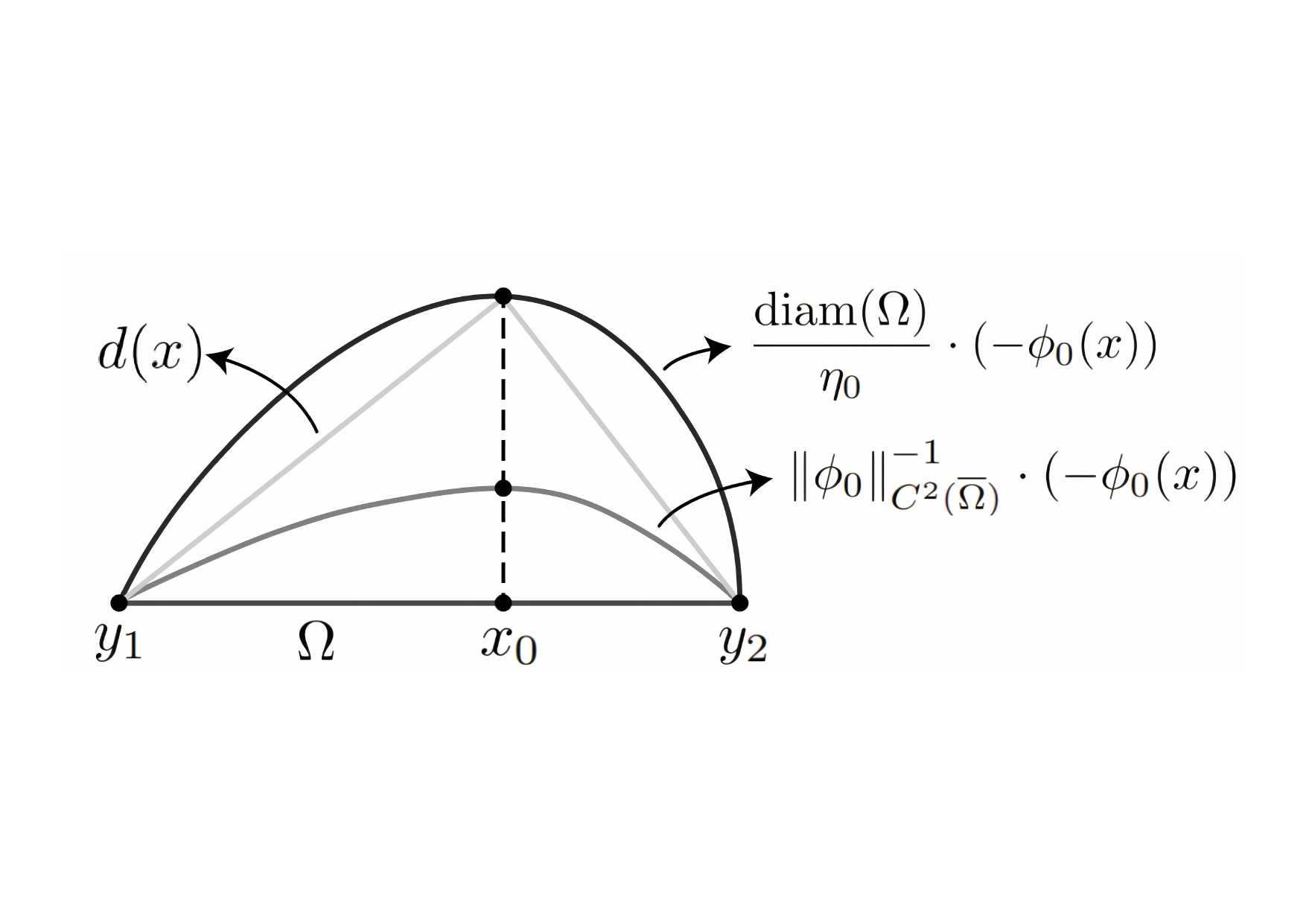} 
	\caption{Estimate of distance function $d(x)$.	} 
	\label{fig-lemma} 
\end{figure}

\begin{Lemma}[Estimate of distance function $d(x)$]\label{lemma:dx}
	There holds
	\begin{equation*}
		\left\|\phi_0\right\|^{-1}_{C^2(\overline{\Omega})}\cdot\left(-\phi_0(x)\right)\leq d(x)\leq \frac{\operatorname{diam}(\Omega)}{\eta_0}\cdot\left(-\phi_0(x)\right),\ \ \forall x\in\Omega.
	\end{equation*}
\end{Lemma}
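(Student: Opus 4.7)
The plan is to prove the two inequalities by separate arguments of rather different flavor. The lower bound on $d(x)$ will come from a Lipschitz-type estimate on $\phi_0$ using its $C^2$ regularity, while the upper bound on $d(x)$ will come from a convexity comparison that exploits the existence of an interior minimum point $x_0 \in \Omega$ with $-\phi_0(x_0) = \eta_0$ provided by \eqref{phi-0-2}.

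For the left inequality, I would fix $x \in \Omega$ and choose $y \in \partial\Omega$ with $|x - y| = d(x)$. Since $\phi_0(y) = 0$ by Lemma \ref{lemma:uniform-convex}(iii) and $\phi_0(x) \leq 0$ by \eqref{phi-0-1}, the mean value theorem applied along the segment from $y$ to $x$ gives
$$-\phi_0(x) = |\phi_0(x) - \phi_0(y)| \leq \|\nabla \phi_0\|_{L^\infty(\overline{\Omega})}\,|x - y| \leq \|\phi_0\|_{C^2(\overline{\Omega})} \, d(x),$$
and dividing by $\|\phi_0\|_{C^2(\overline{\Omega})}$ yields the desired bound.

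For the right inequality, fix $x \in \Omega$; the case $x = x_0$ is immediate since $-\phi_0(x_0) = \eta_0$ and $d(x_0) \leq \operatorname{diam}(\Omega)$. Otherwise, consider the straight line $\ell$ through $x$ and $x_0$; since $\Omega$ is bounded and convex, $\ell \cap \overline{\Omega}$ is a closed segment whose two endpoints $y_1, y_2$ lie on $\partial\Omega$, and since $x_0$ is an interior point of $\Omega$, one of them---call it $y$---satisfies the property that $x$ belongs to the segment $\overline{y\, x_0}$. Writing $x = (1 - s)\,y + s\,x_0$ with $s = |x - y|/|x_0 - y| \in (0, 1)$, the convexity of $\phi_0$ together with $\phi_0(y) = 0$ and $\phi_0(x_0) = -\eta_0$ yields
$$\phi_0(x) \leq (1 - s)\,\phi_0(y) + s\,\phi_0(x_0) = -s\,\eta_0.$$
Since $y \in \partial\Omega$ forces $|x - y| \geq d(x)$ and $|x_0 - y| \leq \operatorname{diam}(\Omega)$, we obtain $s \geq d(x)/\operatorname{diam}(\Omega)$, hence $-\phi_0(x) \geq \eta_0\,d(x)/\operatorname{diam}(\Omega)$, which rearranges to the right inequality.

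I do not anticipate any substantive obstacle; the only piece of care is the geometric bookkeeping in the convexity step, namely selecting the boundary endpoint $y$ so that $x$ lies in $\overline{y\,x_0}$. This selection is automatic: since $x_0$ is an interior point of $\Omega$, the line $\ell$ exits $\partial\Omega$ on both sides of $x_0$, and taking the endpoint on the same side of $x_0$ as $x$ places $x$ between that endpoint and $x_0$, which is exactly the configuration needed to apply convexity.
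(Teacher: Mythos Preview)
Your proposal is correct and follows essentially the same approach as the paper: the left inequality via a Lipschitz bound using the nearest boundary point, and the right inequality via convexity along the line through $x$ and the interior minimum point $x_0$. The only cosmetic difference is that you phrase the convexity step as $\phi_0((1-s)y + s x_0) \leq (1-s)\phi_0(y) + s\phi_0(x_0)$, whereas the paper writes the equivalent slope-monotonicity inequality $\frac{|\phi_0(y_1)-\phi_0(x)|}{|y_1-x|} \geq \frac{|\phi_0(y_1)-\phi_0(x_0)|}{|y_1-x_0|}$; these are the same statement.
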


\begin{proof}
	We note that for any $x\in\Omega$, there exists $z\in\partial\Omega$ such that $d(x)=|x-z|$. Since $\phi_0$ is a convex function and $\phi_0(z)=0$, we can derive
	\begin{equation}\label{inequality-1}
		\frac{|\phi_0(x)|}{d(x)}=\frac{|\phi_0(x)-\phi_0(z)|}{|x-z|}\leq \|D\phi_0\|_{L^\infty(\Omega)}\leq \|\phi_0\|_{C^2(\overline{\Omega})}.
	\end{equation}
	
	Fix any $x\in\Omega$ with $x\neq x_0$. We use $L$ to denote the line connecting $x_0$ and $x$. Due to the convexity of $\Omega$, there always exist two intersection points of $L$ and $\partial\Omega$. Without loss of generality, we can assume that $L\cap \partial\Omega=\{y_1,y_2\}$ and $x$ is between $x_0$ and $y_1$. Since $\phi_0$ restricted to $L$ is also a convex function, we have 
	\begin{equation*}
		\frac{|\phi_0(y_1)-\phi_0(x)|}{|y_1-x|}\geq\frac{|\phi_0(y_1)-\phi_0(x_0)|}{|y_1-x_0|}.
	\end{equation*}
Using  $\phi_0(y_1)=0$ and \eqref{phi-0-2}, we further obtain 
	\begin{equation}\label{inequality-2}
		|\phi_0(x)|\geq\frac{|y_1-x|}{|y_1-x_0|}|\phi_0(x_0)|\geq\frac{d(x)}{\operatorname{diam}(\Omega)}\eta_0.
	\end{equation}
	
	Combining \eqref{inequality-1} and \eqref{inequality-2} gives rise to 
	\begin{equation*}
		\left\|\phi_0\right\|^{-1}_{C^2(\overline{\Omega})}\left|\phi_0(x)\right|\leq d(x)\leq \frac{\operatorname{diam}(\Omega)}{\eta_0}\left|\phi_0(x)\right|,\ \ \forall x\in\Omega.
	\end{equation*}
	Together with \eqref{phi-0-1}, this leads us to the desired result.
\end{proof}

\subsection{Eigenvalue estimates}

Different from \cite{Li-Li-2022}, we will employ quadratic form   to do eigenvalue estimates for the Hessian matrix  of convex function   in this paper. 

For convenience, we provide general expressions in quadratic form for the minimum and maximum eigenvalues of a positive semi-definite matrix in the following lemma. The result is evident, so we will omit its proof.
\begin{Lemma}[Eigenvalue estimates in quadratic form]\label{lemma:eigenvalue} If $A$ is a positive semi-definite matrix, then we have
	\begin{align*}
		\lambda_{\min}(A)&=\min_{\bm{\xi}\neq0}\,\frac{\bm{\xi}^TA\,\bm{\xi}}{\bm{\xi}^T\bm{\xi}}=\min_{|\bm{\xi}|=1}\,\bm{\xi}^TA\,\bm{\xi},\\
		\lambda_{\max}(A)&=\max_{\bm{\xi}\neq0}\,\frac{\bm{\xi}^TA\,\bm{\xi}}{\bm{\xi}^T\bm{\xi}}=\max_{|\bm{\xi}|=1}\,\bm{\xi}^TA\,\bm{\xi}.
	\end{align*}
	Here,  $\frac{\bm{\xi}^TA\,\bm{\xi}}{\bm{\xi}^T\bm{\xi}}$ is known as  
	the Rayleigh quotient.  
\end{Lemma}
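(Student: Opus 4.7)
The plan is to invoke the spectral theorem for symmetric matrices, which applies here since every positive semi-definite matrix is in particular symmetric. The statement itself is the classical Rayleigh--Ritz characterization, so the real work is just bookkeeping with an orthonormal eigenbasis.

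First I would reduce the two formulations to one. Scale invariance of the Rayleigh quotient gives
$$\frac{\bm{\xi}^T A\,\bm{\xi}}{\bm{\xi}^T\bm{\xi}}=\left(\tfrac{\bm{\xi}}{|\bm{\xi}|}\right)^T A\,\left(\tfrac{\bm{\xi}}{|\bm{\xi}|}\right)$$
for every nonzero $\bm{\xi}$, so the unconstrained optimization over $\bm{\xi}\neq 0$ is equivalent to the constrained optimization over the unit sphere $|\bm{\xi}|=1$. It therefore suffices to establish
$$\min_{|\bm{\xi}|=1}\bm{\xi}^T A\,\bm{\xi}=\lambda_{\min}(A),\qquad \max_{|\bm{\xi}|=1}\bm{\xi}^T A\,\bm{\xi}=\lambda_{\max}(A).$$

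Next, by the spectral theorem I would pick an orthogonal matrix $Q$ such that $Q^T A\,Q=\operatorname{diag}(\lambda_1(A),\ldots,\lambda_n(A))$, and substitute $\bm{\eta}=Q^T\bm{\xi}$. Since $Q$ is orthogonal, $|\bm{\eta}|=|\bm{\xi}|$, so the map $\bm{\xi}\mapsto\bm{\eta}$ is a bijection of the unit sphere onto itself, and
$$\bm{\xi}^T A\,\bm{\xi}=\bm{\eta}^T(Q^T A\,Q)\bm{\eta}=\sum_{i=1}^n\lambda_i(A)\,\eta_i^2.$$
With the constraint $\sum_i\eta_i^2=1$ and (by positive semi-definiteness) $\lambda_i(A)\geq 0$, the right-hand side is a convex combination of the eigenvalues and hence lies in $[\lambda_{\min}(A),\lambda_{\max}(A)]$.

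Finally I would verify sharpness: choosing $\bm{\eta}=\bm{e}_{i_{\min}}$, where $i_{\min}$ indexes a minimal eigenvalue, gives $\bm{\xi}^T A\,\bm{\xi}=\lambda_{\min}(A)$, and similarly for $\lambda_{\max}(A)$; the corresponding optimizers are the associated unit eigenvectors $\bm{\xi}=Q\bm{e}_{i_{\min}}$ and $\bm{\xi}=Q\bm{e}_{i_{\max}}$. This establishes both identities simultaneously. There is no real obstacle: this is standard linear algebra, and the hypothesis that $A$ is positive semi-definite is not even essential for the identities (symmetry suffices); it is presumably included because the lemma will be applied to Hessians of convex functions later in the paper.
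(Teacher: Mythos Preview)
Your proof is correct and complete; it is the standard Rayleigh--Ritz argument via the spectral theorem. The paper itself omits the proof entirely, stating only that ``the result is evident,'' so there is nothing to compare against and your write-up simply fills in what the authors left implicit.
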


Based on Lemma \ref{lemma:eigenvalue}, we have the following two remarks for $\phi_0$ introduced in Lemma \ref{lemma:uniform-convex}:

\begin{Remark}\label{remark:rho}
	Since $\phi_0\in C^2(\overline{\Omega})$ is uniformly convex, then 
	\begin{equation*}
		\text{$D^2\phi_0$ is positive definite, i.e. $D^2\phi_0>0$},
	\end{equation*}
	and we can denote 
	\begin{align*}
		\underline{\rho_0}:&=\min_{x\in\overline{\Omega},\,|\bm{\xi}|=1}\,\bm{\xi}^TD^2\phi_0(x) \,\bm{\xi},\\
		\overline{\rho_0}:&=\max_{x\in\overline{\Omega},\,|\bm{\xi}|=1}\,\bm{\xi}^TD^2\phi_0(x)\,\bm{\xi},
	\end{align*}
	such that 
	\begin{equation*} 
		\underline{\rho_0}\,\mathrm{I}_n\leq D^2\phi_0\leq  \overline{\rho_0}\,\mathrm{I}_n,
	\end{equation*}
	where $\mathrm{I}_n$ is the $n$-order unit matrix, and the constants $\underline{\rho_0}$ and $\overline{\rho_0}$  satisfy $0<\underline{\rho_0}\leq \overline{\rho_0}$.
\end{Remark}

\begin{Remark}\label{remark:product}
	We note that all the $n$ eigenvalues of matrix  $D\phi_0\otimes D\phi_0$ are $0,\cdots,0,|D\phi_0|^2$. It follows that
	\begin{equation*}
		\text{$D\phi_0\otimes D\phi_0$ is  positive semi-definite, i.e. $D\phi_0\otimes D\phi_0\geq 0$.}
	\end{equation*}
	Moreover, if we use $\bm{\xi_1}$ and $\bm{\xi_2}$ to denote the 
	unit eigenvectors of the matrix $D\phi_0\otimes D\phi_0$  corresponding to the eigenvalues $0$ and $|D\phi_0|^2$ respectively, then we have 
	\begin{align*}
		\min_{|\bm{\xi}|=1}\,\bm{\xi}^TD\phi_0\otimes D\phi_0\,\bm{\xi}&=\bm{\xi_1}^TD\phi_0\otimes D\phi_0\,\bm{\xi_1}= 0,\\
		\max_{|\bm{\xi}|=1}\,\bm{\xi}^TD\phi_0\otimes D\phi_0\,\bm{\xi}&=\bm{\xi_2}^TD\phi_0\otimes D\phi_0\,\bm{\xi_2}= |D\phi_0|^2.
	\end{align*}
\end{Remark}

\section{Passing from classical sub-solution  to viscosity solution}
 \label{Sec:lemma2}
 
This section is dedicated to proving a key lemma that facilitates the passage from classical sub-solutions to viscosity solutions for the problem \eqref{equation}-\eqref{bcondition}. 
 
 \begin{Lemma}[Key Lemma]\label{lemma2}
 If the equation \eqref{equation} admits a convex viscosity sub-solution $v\in C(\overline{\Omega})$ satisfying $v\big|_{\partial\Omega}= \varphi$, then it  admits a convex viscosity solution $u\in C(\overline{\Omega})$ satisfying $u\big|_{\partial\Omega}= \varphi$. In particular, if  the equation \eqref{equation} admits a convex classical sub-solution  $v\in C(\overline{\Omega})$ satisfying $v\big|_{\partial\Omega}= \varphi$, then it  admits a convex viscosity solution $u\in C(\overline{\Omega})$ satisfying $u\big|_{\partial\Omega}= \varphi$. 
 \end{Lemma}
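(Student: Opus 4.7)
I will use the Perron method adapted to this convex-viscosity framework. The first observation is that the Monge--Amp\`ere extension $\psi\in C(\overline{\Omega})$ from \eqref{eq:MA-1} is the \emph{maximal} convex function on $\overline{\Omega}$ with boundary trace $\varphi$: by the affine-envelope characterization $\psi(x)=\sup\{L(x):L\text{ affine},\ L\leq\varphi\text{ on }\partial\Omega\}$, any convex $w\in C(\overline{\Omega})$ with $w|_{\partial\Omega}=\varphi$ equals the supremum of its supporting affine functions, each of which lies below $\varphi$ on $\partial\Omega$, whence $w\leq\psi$ on $\overline{\Omega}$. I then introduce the Perron class
$$\mathcal{S}:=\bigl\{w\in C(\overline{\Omega}): w\text{ is a convex viscosity sub-solution of \eqref{equation}},\ w|_{\partial\Omega}=\varphi,\ w\geq v\bigr\},$$
which contains $v$ and is uniformly bounded above by $\psi$, and set $u(x):=\sup_{w\in\mathcal{S}}w(x)$.

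Being a pointwise supremum of convex functions, $u$ is convex (hence continuous on $\Omega$); the two-sided squeeze $v\leq u\leq\psi$ together with $v|_{\partial\Omega}=\psi|_{\partial\Omega}=\varphi$ forces $u\in C(\overline{\Omega})$ with $u|_{\partial\Omega}=\varphi$. For the viscosity sub-solution property I would run the standard perturbation: given a convex test $\phi\in C^2(U(x_0))$ with $\phi-u$ attaining a local minimum at $x_0$, perturb to $\phi_\varepsilon(x):=\phi(x)+\varepsilon(|x-x_0|^2-r^2)$ on some $B_r(x_0)\Subset\Omega$ (still convex, with $\phi_\varepsilon\geq u$ on $\partial B_r$ and $\phi_\varepsilon(x_0)<u(x_0)$); choosing $w_k\in\mathcal{S}$ with $w_k(x_0)\to u(x_0)$, the interior minimizers $y_k\in B_r$ of $\phi_\varepsilon-w_k$ are trapped to $x_0$ as $k\to\infty$, so the viscosity sub-solution condition for $w_k$ at $y_k$, combined with continuity of $F$, $g$, $d$ and passage to $\varepsilon\to 0^+$, delivers $F(D^2\phi(x_0))\geq g(x_0)(d(x_0))^\alpha$.

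For the viscosity super-solution property, argue by contradiction. Suppose at some $x_0\in\Omega$ there is a convex $\phi\in C^2(U(x_0))$ with $\phi-u$ attaining a local maximum at $x_0$ and $F(D^2\phi(x_0))>g(x_0)(d(x_0))^\alpha$; by continuity $\phi$ is a classically strict sub-solution on a small ball $B_r(x_0)\Subset\Omega$ (Lemma \ref{lemma:equiv}). I would then construct a convex classical strict sub-solution $\theta$ on $\overline{B_r(x_0)}$ satisfying $\theta(x_0)>u(x_0)$ and $\theta<u$ on $\partial B_r$, and define the max-bump
$$\widetilde{u}(x):=\begin{cases}\max\{u(x),\,\theta(x)\},& x\in\overline{B_r(x_0)},\\ u(x),& x\in\overline{\Omega}\setminus B_r(x_0).\end{cases}$$
The strict separation $\theta<u$ in a neighborhood of $\partial B_r$ ensures that $\widetilde{u}$ is continuous and convex (the max coincides with $u$ in that neighborhood, so the gluing is seamless; convexity across $\partial B_r$ follows by comparing chords of $u$ and $\theta$ on $\{\theta>u\}\Subset B_r$), $\widetilde{u}|_{\partial\Omega}=\varphi$, $\widetilde{u}\geq v$, and $\widetilde{u}$ is a viscosity sub-solution of \eqref{equation} on $\Omega$ (max of two viscosity sub-solutions in $B_r$, unchanged outside). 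Hence $\widetilde{u}\in\mathcal{S}$, yet $\widetilde{u}(x_0)>u(x_0)=\sup\mathcal{S}$, the desired contradiction. The second (classical sub-solution) assertion of the lemma follows at once from Lemma \ref{lemma:equiv}.

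The principal obstacle is precisely the construction of $\theta$. The naive candidate $\theta=\phi+\varepsilon_1-\varepsilon_2|x-x_0|^2$ is convex only when $\lambda_{\min}(D^2\phi(x_0))\geq 2\varepsilon_2$, but $(\mathbf{A_2})$-(i) does not preclude $\lambda_{\min}(D^2\phi(x_0))=0$ while $F(D^2\phi(x_0))>0$ (for example when $a=0$), so the quadratic correction may destroy convexity. My workaround is to first strictly convexify by setting $\widehat{\phi}:=\phi+\mu|x-x_0|^2$: the semi-ellipticity \eqref{semi-elliptic} yields $F(D^2\widehat{\phi})\geq F(D^2\phi)>g(d)^\alpha$, so $\widehat{\phi}$ remains a classical strict sub-solution, now with $\lambda_{\min}(D^2\widehat{\phi})\geq 2\mu>0$. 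One then shrinks $r$ so that the touching set $\{\phi=u\}\cap\overline{B_r}$ does not reach $\partial B_r$ (achievable for all sufficiently small $r$; the sub-case where $\phi\equiv u$ on an open set is handled separately by noting that $u$ is then classically smooth with $F(D^2u)>g(d)^\alpha$ and perturbing $u$ itself), chooses $\mu$ small relative to $\inf_{\partial B_r}(u-\phi)$ to guarantee $\widehat{\phi}<u$ on $\partial B_r$, and finally sets $\theta:=\widehat{\phi}+\varepsilon$ with $\varepsilon>0$ sufficiently small, whereupon all three required properties hold.
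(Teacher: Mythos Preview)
Your overall strategy---Perron's method with $u:=\sup_{\mathcal{S}}$---is exactly what the paper does. Your use of the Monge--Amp\`ere extension $\psi$ as an upper barrier (via the squeeze $v\leq u\leq\psi$) is a clean alternative to the paper's cone-function argument for boundary continuity, and your sub-solution step via approximating sequences $w_k\in\mathcal{S}$ with $w_k(x_0)\to u(x_0)$ is a standard and correct variant of the paper's single-point contradiction.

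The super-solution step is where you diverge, and you have correctly spotted a genuine subtlety (present in the paper's own argument as well): subtracting a quadratic from the test function $\phi$ can destroy convexity when $\lambda_{\min}(D^2\phi(x_0))=0$, and nothing in $(\mathbf{A_2})$ rules this out. However, your proposed workaround is incomplete. You claim that for all sufficiently small $r$ the touching set $\{\phi=u\}\cap\overline{B_r}$ avoids $\partial B_r$, except in the ``sub-case where $\phi\equiv u$ on an open set.'' This dichotomy is false: the touching set can be a lower-dimensional set through $x_0$ that is neither a single point nor open---for instance, with $u(x_1,x_2)=|x_1|$ and $\phi\equiv 0$ one has $\{\phi=u\}=\{x_1=0\}$, so $\inf_{\partial B_r}(u-\phi)=0$ for every $r>0$, yet $\phi\not\equiv u$ on any open set. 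In that situation you cannot choose $\mu$ ``small relative to $\inf_{\partial B_r}(u-\phi)$,'' and your construction of $\theta=\widehat{\phi}+\varepsilon$ fails to satisfy $\theta<u$ on $\partial B_r$. A secondary point: the global convexity of the piecewise bump $\widetilde{u}$ across $\partial B_r$ is not as immediate as ``comparing chords'' suggests, because $\theta$ is only defined on $B_r$; to write $\widetilde{u}=\max\{u,\theta\}$ globally one must first extend $\theta$ convexly to $\overline{\Omega}$ (e.g., as the supremum of its tangent planes) and check that the extension stays below $u$ outside a compact subset of $B_r$.
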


The proof of Lemma \ref{lemma2} is based on an adaption of the Perron method. 
We first define
\begin{equation}\label{defineS}
S:=\{v:\ v\in C(\overline{\Omega}),\ v\big|_{\partial\Omega}= \varphi,\ \text{ and }
v \text{ is a convex viscosity sub-solution to \eqref{equation}  over $\Omega$}\}.
\end{equation}
Since the condition of this lemma already gives $S\neq \emptyset$, 
our goal is reduced to showing that  
\begin{equation}\label{defineu}
	u_0(x):=\sup_{v\in S}\,v(x),\ \ \forall x\in\overline{\Omega}
\end{equation}
is the desired convex viscosity solution to \eqref{equation}.  
The rest of the proof can be split into five steps. 

\subsection*{Step 1:} \textit{$u_0\big|_{\partial\Omega}= \varphi$.} 

This is clear because it holds  for any $x\in\partial\Omega$ that 
\begin{equation*}
	u_0(x)=\sup_{v\in S}\,v(x)=\sup_{v\in S}\,\varphi(x)=\varphi(x).
\end{equation*}

\subsection*{Step 2:} \textit{$u_0$ is a convex function over $\overline{\Omega}$.} 

For any $x_0,y_0\in\overline{\Omega}$, $t_0\in(0,1)$, and $z_0=t_0x_0+(1-t_0)y_0\in\Omega$, we can infer from the definition \eqref{defineu} that there exists a sequence $\{v_k\}\subset S$ such that $$\lim_{k\to\infty}v_k(z_0)=u_0(z_0).$$ Since $v_k$ is convex, there holds
$$v_k(z_0)\leq t_0v_k(x_0)+(1-t_0)v_k(y_0).$$  It follows that 
\begin{align*}
	u_0(z_0)&=\lim_{k\to\infty}v_k(z_0)=\varlimsup_{k\to\infty}v_k(z_0)\\
	&\leq \varlimsup_{k\to\infty}\left(t_0v_k(x_0)+(1-t_0)v_k(y_0)\right)\\
	&\leq 
	t_0u_0(x_0)+(1-t_0)u_0(y_0).
\end{align*}
Thus $u_0$ is a convex function over $\overline{\Omega}$. 

\subsection*{Step 3:}
\textit{$u_0\in C(\overline{\Omega})$.} 

From Step 1, we have $u_0\big|_{\partial\Omega}= \varphi$. 
Using Step 2 and the openness of $\Omega$, we note that $u_0\in C(\Omega)$. It remains to prove that $u_0$ is also continuous on $\partial\Omega$. 

For any $z_0\in\Omega$, let $\mathcal{C}_{z_0}(x)$ be the cone  function generated by the point $(z_0,u_0(z_0))$ and the set of all boundary value points  $\{(x,\varphi(x)):\, x\in\partial\Omega\}$. By virtue of Step 2, we have 
$\mathcal{C}_{z_0}(x)\geq u_0(x)$ for any  $x\in\overline{\Omega}$. 
Clearly, we also have $\mathcal{C}_{z_0}(x)\in C(\overline{\Omega})$ and $\mathcal{C}_{z_0}(x)\big|_{\partial\Omega}= \varphi$. Thus we can derive
\begin{equation}\label{eqI}
\lim_{x\to y\in\partial\Omega}\mathcal{C}_{z_0}(x)=\varphi(y).
\end{equation}

Now we arbitrarily take $v_0\in S$. Using the definition \eqref{defineS} of $S$ then gives rise to $v_0(x)\in C(\overline{\Omega})$ and $v_0(x)\big|_{\partial\Omega}= \varphi$. Hence we obtain 
\begin{equation}\label{eqII}
	\lim_{x\to y\in\partial\Omega}v_0(x)=\varphi(y).
\end{equation}

By the definition \eqref{defineu} of $u_0$, we note that $	u_0(x)\geq v_0(x)$ for any  $x\in\overline{\Omega}$. It follows that 
\begin{equation}\label{eqIII}
	\mathcal{C}_{z_0}(x)\geq u_0(x)\geq v_0(x),\ \ \forall x\in\overline{\Omega},
\end{equation}
\begin{figure}[htbp]  
	\centering 
	\includegraphics[scale=0.22]{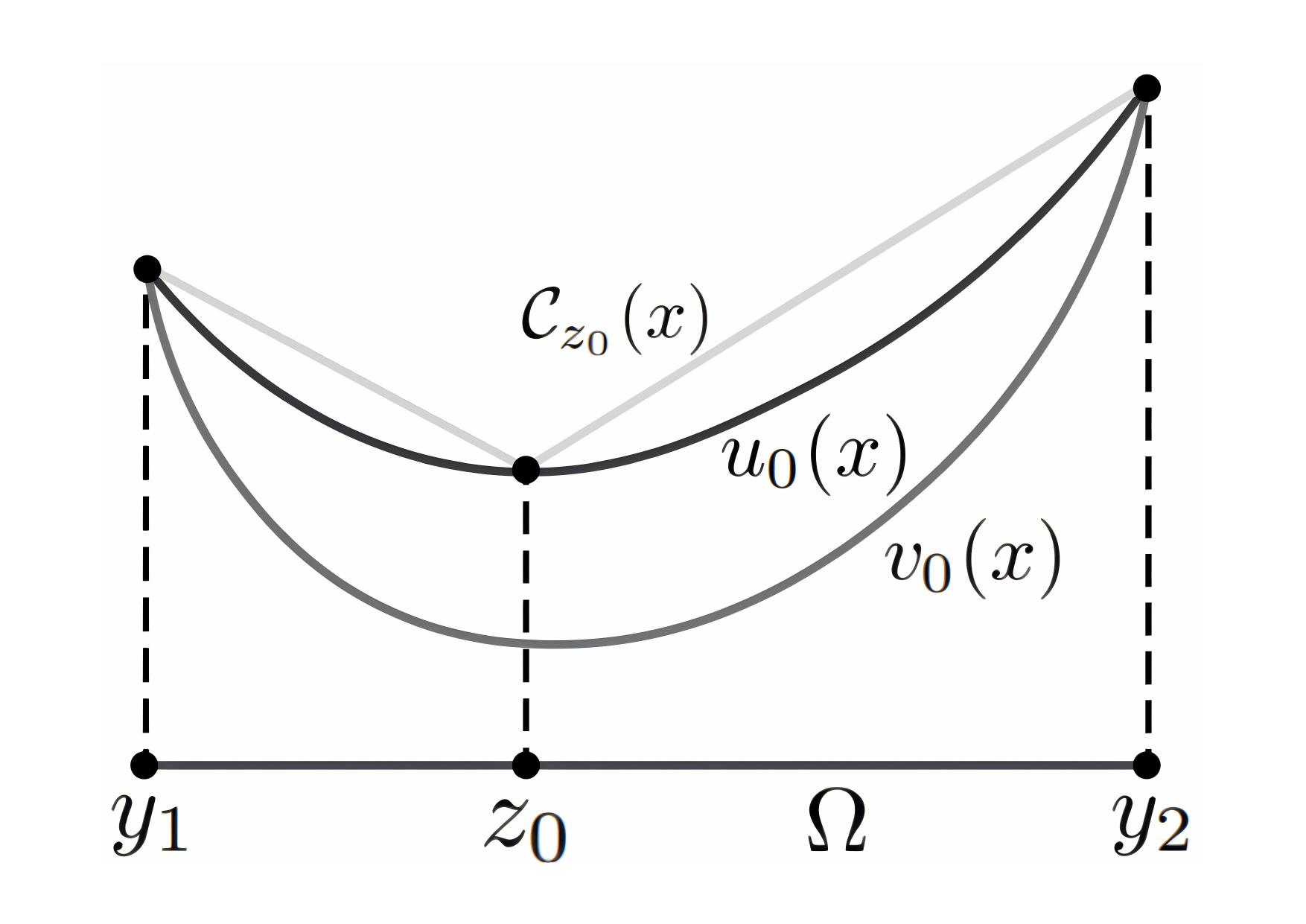} 
	\caption{Relations among the functions $\mathcal{C}_{z_0}(x)$, $u_0(x)$, and $v_0(x)$.	} 
	\label{fig-step} 
\end{figure}
For the $n=1$ case, this relation can be easily illustrated by Figure \ref{fig-step}, where $y_1,y_2\in\partial\Omega$, both $\mathcal{C}_{z_0}(x)$ and $u_0(x)$ take the minimum at $z_0$.  
Combining \eqref{eqI}, \eqref{eqII} and \eqref{eqIII} together, we can apply the squeeze theorem to get
\begin{equation*} 
	\lim_{x\to y\in\partial\Omega}u_0(x)=\varphi(y),
\end{equation*}
which implies that $u_0$ is continuous on $\partial\Omega$. Up to now, we have proved that $u_0$ is  continuous over $\overline{\Omega}$.

\subsection*{Step 4:} 
\textit{$u_0$ is a viscosity sub-solution to \eqref{equation}  over $\Omega$.}

The proof of this step is by contradiction. We suppose $u_0$ does not satisfy the condition of viscosity sub-solution at some point $\overline{x_0}\in\Omega$. Then there exists  an open neighborhood $U_0(\overline{x_0})\subset\Omega$ and a convex function $\overline{\phi_0}\in C^2(U_0(\overline{x_0}))$ 
satisfying
\begin{equation}\label{eq:minimum}
	(\overline{\phi_0}-u_0)(x)\geq (\overline{\phi_0}-u_0)(\overline{x_0}),\ \ \forall x\in U_0(\overline{x_0})
\end{equation}
such that
\begin{equation*}
	F\left(\lambda_1(D^2 \overline{\phi_0}(\overline{x_0})),\cdots,\lambda_n(D^2 \overline{\phi_0}(\overline{x_0}))\right)< g(\overline{x_0})(d(\overline{x_0}))^{\alpha}.
\end{equation*}

Denote 
\begin{equation*}
	\overline{\zeta_0}:=g(\overline{x_0})(d(\overline{x_0}))^{\alpha}-F\left(\lambda_1(D^2 \overline{\phi_0}(\overline{x_0})),\cdots,\lambda_n(D^2 \overline{\phi_0}(\overline{x_0}))\right).
\end{equation*}
It follows that $\overline{\zeta_0}>0$ and 
\begin{equation*}
	F\left(\lambda_1(D^2 \overline{\phi_0}(\overline{x_0})),\cdots,\lambda_n(D^2 \overline{\phi_0}(\overline{x_0}))\right)< g(\overline{x_0})(d(\overline{x_0}))^{\alpha}-\frac{\overline{\zeta_0}}{2}.
\end{equation*}
Since $F\in C(\mathbb{R}^n)$,   $g(x)\in C(\overline{\Omega})$, $d(x)\in C(\overline{\Omega})$ and $\overline{\phi_0}\in C^2(U_0(\overline{x_0}))$, then there exists $\overline{\delta_0}>0$ such that for any $x\in B(\overline{x_0},\overline{\delta_0})\subset U_0(\overline{x_0})$, we have 
\begin{equation*}
	F\left(\lambda_1(D^2 \overline{\phi_0}(x)),\cdots,\lambda_n(D^2 \overline{\phi_0}(x))\right)< g(x)(d(x))^{\alpha}-\frac{\overline{\zeta_0}}{4},\ \ \forall x\in B(\overline{x_0},\overline{\delta_0}).
\end{equation*}

Let
\begin{equation}\label{eq:overline-phi}
	\overline{\phi_1}(x):=\overline{\phi_0}(x)+\frac{\overline{\varepsilon}|x-\overline{x_0}|^2}{2}-\left(\overline{\phi_0}-u_0\right)(\overline{x_0})-\frac{\overline{\varepsilon}\overline{\delta_0}^2}{8},\ \ \forall x\in B(\overline{x_0},\overline{\delta_0}),
\end{equation}
where $\overline{\varepsilon}\in (0,1]$ is a constant to be determined later. We notice that 
\[D^2\overline{\phi_1}(x)=D^2\overline{\phi_0}(x)+\overline{\varepsilon}\, \mathrm{I}_n,\ \ \ \ \lambda_k\left(D^2\overline{\phi_1}(x)\right)=\lambda_k\left(D^2\overline{\phi_0}(x)\right)+\overline{\varepsilon},\ \ \forall k=1,\cdots,n.\]

Denote   $$R_0:=\left\|\overline{\phi_0}(x)\right\|_{C^2(\overline{B(\overline{x_0},\overline{\delta_0})})}.$$ Then we have
\begin{align*}
	&	\left(\lambda_1(D^2 \overline{\phi_1}(x)),\cdots,\lambda_n(D^2 \overline{\phi_1}(x))\right)\in [0,R_0+1]^n,\ \ \forall x\in B(\overline{x_0},\overline{\delta_0}),\\ 
	& 	\left(\lambda_1(D^2 \overline{\phi_0}(x)),\cdots,\lambda_n(D^2 \overline{\phi_0}(x))\right)\in [0,R_0+1]^n,\ \ \forall x\in B(\overline{x_0},\overline{\delta_0}).
\end{align*}
By virtue of $F\in C(\mathbb{R}^n)$,  it is clear that $F$ is uniformly continuous on $[0,R_0+1]^n$, which means there exists   a   constant $\overline{\varepsilon_0}>0$ such that for any $0<\overline{\varepsilon}\leq \overline{\varepsilon_0}$, we have 
\begin{equation*}
	\left|F\left(\lambda_1(D^2 \overline{\phi_1}(x)),\cdots,\lambda_n(D^2 \overline{\phi_1}(x))\right)-	F\left(\lambda_1(D^2 \overline{\phi_0}(x)),\cdots,\lambda_n(D^2 \overline{\phi_0}(x))\right)\right|<\frac{\overline{\zeta_0}}{8},\ \ \forall x\in B(\overline{x_0},\overline{\delta_0}).
\end{equation*}
Up to now, we get
\begin{equation}\label{eq:Q1}
	F\left(\lambda_1(D^2 \overline{\phi_1}(x)),\cdots,\lambda_n(D^2 \overline{\phi_1}(x))\right)< g(x)(d(x))^{\alpha}-\frac{\overline{\zeta_0}}{8},\ \ \forall x\in B(\overline{x_0},\overline{\delta_0}).
\end{equation}

Without loss of generality, we now take $\overline{\varepsilon}=\overline{\varepsilon_0}$. 
Consider 
\begin{equation}\label{defineU1}
	U_1:=\{x\in B(\overline{x_0},\overline{\delta_0}):\ \overline{\phi_1}(x)<u_0(x)\}.
\end{equation}
Hence, for any $x\in U_1$, using definition  \eqref{eq:overline-phi} now leads us to 
\begin{equation*}
	\overline{\phi_0}(x)+\frac{\overline{\varepsilon_0}|x-\overline{x_0}|^2}{2}-\left(\overline{\phi_0}-u_0\right)(\overline{x_0})-\frac{\overline{\varepsilon_0}\overline{\delta_0}^2}{8}<u_0(x),
\end{equation*}
that is, 
\begin{equation*}
	\frac{\overline{\varepsilon_0}|x-\overline{x_0}|^2}{2}<\left(\overline{\phi_0}-u_0\right)(\overline{x_0})-\left(\overline{\phi_0}-u_0\right)(x)+\frac{\overline{\varepsilon_0}\overline{\delta_0}^2}{8}.
\end{equation*}
which along with \eqref{eq:minimum} further gives
\begin{equation*}
	|x-\overline{x_0}| < \frac{\overline{\delta_0}}{2},\ \ \  \ \text{i.e. }	x\in B(\overline{x_0},\frac{\overline{\delta_0}}{2}).
\end{equation*}
Now we obtain
\begin{equation}\label{relationU1}
	U_1\subset B(\overline{x_0},\frac{\overline{\delta_0}}{2})\subset\subset B(\overline{x_0},\overline{\delta_0}).
\end{equation}

In view of \eqref{defineu} and \eqref{eq:overline-phi}, we note that
\begin{align*}
	u_0(\overline{x_0})&=\sup_{v\in S}\,v(\overline{x_0}),\\
	\overline{\phi_1}(\overline{x_0})&=u_0(\overline{x_0})-\frac{\overline{\varepsilon_0}\overline{\delta_0}^2}{8}.
\end{align*}
Then by deﬁnition of supremum, there exists $v_1\in S$ such that 
\begin{equation*}
	v_1(\overline{x_0})>u_0(\overline{x_0})-\frac{\overline{\varepsilon_0}\overline{\delta_0}^2}{16}>u_0(\overline{x_0})-\frac{\overline{\varepsilon_0}\overline{\delta_0}^2}{8}=\overline{\phi_1}(\overline{x_0}).
\end{equation*}
Let
\[U_2:=\{x\in B(\overline{x_0},\overline{\delta_0}):\ \overline{\phi_1}(x)<v_1(x)\}.\]
Since $v_1(x)\leq u_0(x)$ holds for any $x\in\Omega$, then by \eqref{defineU1} and \eqref{relationU1}, we have 
\begin{equation*} 
	U_2\subset	U_1\subset\subset B(\overline{x_0},\overline{\delta_0})\subset\Omega.
\end{equation*}

We turn to consider $\overline{\phi_1}-v_1$ on $U_2$. By virtue of $\overline{\phi_1}-v_1\in C(B(\overline{x_0},\overline{\delta_0}))$,  there always exists some point $z_0\in\overline{U_2}$ such that 
\begin{equation*}
	\left(\overline{\phi_1}-v_1\right)(z_0)=\min_{x\in\overline{U_2}}\left(\overline{\phi_1}-v_1\right)(x).
\end{equation*}
Together with  $\left(\overline{\phi_1}-v_1\right)\big|_{\partial U_2}=0$, we can derive $z_0\in U_2$, which implies that $\overline{\phi_1}-v_1$ indeed takes its minimum at $z_0$ in the interior of $U_2$. Due to $\overline{\phi_0}\in C^2(U_0(\overline{x_0}))$ and \eqref{eq:overline-phi}, we have $\overline{\phi_1}\in C^2(U_2)$. 
We also note that $v_1\in S$ is a viscosity sub-solution to \eqref{equation} over $\Omega$. Then Definition \ref{def:vis} leads us to 
\begin{equation*}
	F\left(\lambda_1(D^2 \overline{\phi_1}(z_0)),\cdots,\lambda_n(D^2 \overline{\phi_1}(z_0))\right)\geq  g(z_0)(d(z_0))^{\alpha},
\end{equation*}
which contradicts \eqref{eq:Q1}. Thus we are ready to conclude that $u_0$ is a viscosity sub-solution to \eqref{equation}  over $\Omega$.

\subsection*{Step 5:}
\textit{$u_0$ is a viscosity super-solution to \eqref{equation}  over $\Omega$.}

By the above four steps, we already obtain $u_0\in S$. 
The proof of this step is also by contradiction. We suppose $u_0$ does not satisfy the condition of viscosity super-solution at some point $\underline{x_0}\in\Omega$. Then there exists  an open neighborhood $V_0(\underline{x_0})\subset\Omega$ and a convex function $\underline{\phi_0}\in C^2(V_0(\underline{x_0}))$ 
satisfying
\begin{equation*} 
	(\underline{\phi_0}-u_0)(x)\leq (\underline{\phi_0}-u_0)(\underline{x_0}),\ \ \forall x\in V_0(\underline{x_0})
\end{equation*}
such that
\begin{equation*}
	F\left(\lambda_1(D^2 \underline{\phi_0}(\underline{x_0})),\cdots,\lambda_n(D^2 \underline{\phi_0}(\underline{x_0}))\right)> g(\underline{x_0})(d(\underline{x_0}))^{\alpha}.
\end{equation*}

Denote 
\begin{equation*}
	\underline{\zeta_0}:=F\left(\lambda_1(D^2 \underline{\phi_0}(\underline{x_0})),\cdots,\lambda_n(D^2 \underline{\phi_0}(\underline{x_0}))\right)-g(\underline{x_0})(d(\underline{x_0}))^{\alpha}.
\end{equation*}
Then we have $\underline{\zeta_0}>0$ and 
\begin{equation*}
	F\left(\lambda_1(D^2 \underline{\phi_0}(\underline{x_0})),\cdots,\lambda_n(D^2 \underline{\phi_0}(\underline{x_0}))\right)> g(\underline{x_0})(d(\underline{x_0}))^{\alpha}+\frac{\underline{\zeta_0}}{2}.
\end{equation*}
Since $F\in C(\mathbb{R}^n)$,   $g(x)\in C(\overline{\Omega})$, $d(x)\in C(\overline{\Omega})$ and $\underline{\phi_0}\in C^2(V_0(\underline{x_0}))$, then there exists $\underline{\delta_0}>0$ such that for any $x\in B(\underline{x_0},\underline{\delta_0})\subset V_0(\underline{x_0})$, we have 
\begin{equation*}
	F\left(\lambda_1(D^2 \underline{\phi_0}(x)),\cdots,\lambda_n(D^2 \underline{\phi_0}(x))\right)> g(x)(d(x))^{\alpha}+\frac{\underline{\zeta_0}}{4},\ \ \forall x\in B(\underline{x_0},\underline{\delta_0}).
\end{equation*}

Let
\begin{equation}\label{eq:underline-phi}
	\underline{\phi_1}(x):=\underline{\phi_0}(x)-\frac{\underline{\varepsilon}|x-\underline{x_0}|^2}{2}-\left(\underline{\phi_0}-u_0\right)(\underline{x_0})+\frac{\underline{\varepsilon}\underline{\delta_0}^2}{8},\ \ \forall x\in B(\underline{x_0},\underline{\delta_0}),
\end{equation}
where $\underline{\varepsilon}>0$ is a constant to be determined later. It is clear that 
\[D^2\underline{\phi_1}(x)=D^2\underline{\phi_0}(x)-\underline{\varepsilon}\, \mathrm{I}_n,\ \ \ \ \lambda_k\left(D^2\underline{\phi_1}(x)\right)=\lambda_k\left(D^2\underline{\phi_0}(x)\right)-\underline{\varepsilon},\ \ \forall k=1,\cdots,n.\]
Similar to the arguments used in Step 4, we can derive that there exists   a   constant $\underline{\varepsilon_0}>0$ such that for any $0<\underline{\varepsilon}\leq \underline{\varepsilon_0}$, we have 
\begin{equation*}
	\left|F\left(\lambda_1(D^2 \underline{\phi_1}(x)),\cdots,\lambda_n(D^2 \underline{\phi_1}(x))\right)-	F\left(\lambda_1(D^2 \underline{\phi_0}(x)),\cdots,\lambda_n(D^2 \underline{\phi_0}(x))\right)\right|<\frac{\underline{\zeta_0}}{8},\ \ \forall x\in B(\underline{x_0},\underline{\delta_0}).
\end{equation*}
Hence we obtain
\begin{equation}\label{eq:Q1'}
	F\left(\lambda_1(D^2 \underline{\phi_1}(x)),\cdots,\lambda_n(D^2 \underline{\phi_1}(x))\right)> g(x)(d(x))^{\alpha}+\frac{\underline{\zeta_0}}{8},\ \ \forall x\in B(\underline{x_0},\underline{\delta_0}).
\end{equation} 
Without loss of generality, we now take $\underline{\varepsilon}=\underline{\varepsilon_0}$. 
Consider 
\begin{equation}\label{defineV1}
	V_1:=\{x\in B(\underline{x_0},\underline{\delta_0}):\ \underline{\phi_1}(x)>u_0(x)\}.
\end{equation}
Then for any $x\in V_1$, by using the similar arguments in the Step 4, we can infer that 
\begin{equation}\label{relationV1}
	V_1\subset B(\underline{x_0},\frac{\underline{\delta_0}}{2})\subset\subset B(\underline{x_0},\underline{\delta_0})\subset\Omega.
\end{equation}

Define
\begin{equation}\label{eq:underline-u0}
	\underline{u_0}(x):=\begin{cases}
		\underline{\phi_1}(x)\ \ &\text{ if }x\in V_1,\\
		u_0(x),\ \ &\text{ if }x\in\overline{\Omega}\setminus V_1.
	\end{cases}
\end{equation}
In fact, by \eqref{defineV1} and \eqref{relationV1}, we have
\begin{equation*}
	\underline{u_0}(x)=\begin{cases}
		\max\,\{u_0(x),\underline{\phi_1}(x)\}\ \ &\text{ if }x\in B(\underline{x_0},\underline{\delta_0}),\\
		u_0(x),\ \ &\text{ if }x\in\overline{\Omega}\setminus B(\underline{x_0},\underline{\delta_0}).
	\end{cases}
\end{equation*}
We also note that 
\begin{equation*}
	\max\,\{u_0(x),\underline{\phi_1}(x)\}=u_0(x), \ \ \ \ \forall x\in \partial B(\underline{x_0},\underline{\delta_0}).
\end{equation*}
Then it follows that $\underline{u_0}\in C(\overline{\Omega})$.  
We have already proved in Step 4 that $u_0$ is a viscosity sub-solution to \eqref{equation}  over $\Omega$. In view of  $\underline{\phi_0}\in C^2(U_0(\underline{x_0}))$ and \eqref{eq:underline-phi}, we have $\underline{\phi_1}\in C^2(B(\underline{x_0},\underline{\delta_0}))$. 
By virtue of \eqref{eq:Q1'} and Definition \ref{def:class}, $\underline{\phi_1}$ is a classical sub-solution to \eqref{equation}  over $B(\underline{x_0},\underline{\delta_0})$, and then by Lemma \ref{lemma:equiv}, $\underline{\phi_1}$ is also a viscosity sub-solution to \eqref{equation}  over $B(\underline{x_0},\underline{\delta_0})$. Therefore, $\max\,\{u_0,\underline{\phi_1}\}$ is a viscosity sub-solution to \eqref{equation}  over $B(\underline{x_0},\underline{\delta_0})$, and thus $\underline{u_0}$ is a viscosity sub-solution to \eqref{equation}  over $B(\underline{x_0},\underline{\delta_0})$. 
Using \eqref{eq:underline-u0} and \eqref{bcondition}, we notice that 
\begin{equation*}
\underline{u_0}\big|_{\partial\Omega}=u_0\big|_{\partial\Omega}=\varphi.
\end{equation*}

Thus, by \eqref{defineS}, we can obtain $\underline{u_0}\in S$. 
Consequently, by \eqref{defineu}, we have 
\begin{equation}\label{eq:u0-u0}
	u_0(x)=\sup_{v\in S}\,v(x)\geq \underline{u_0}(x),\ \ \forall x\in\Omega.
\end{equation}
On the other hand,  by \eqref{defineV1} and \eqref{eq:underline-u0}, there holds
\begin{equation*}
\underline{u_0}(x) =
	\underline{\phi_1}(x)>u_0(x),\ \ \forall x\in V_1,
\end{equation*}
which is a contradiction to \eqref{eq:u0-u0} and \eqref{relationV1}. Hence we can summarize that $u_0$  is a viscosity super-solution to \eqref{equation} over $\Omega$.

Up to now, $u_0$ is the viscosity solution $u$ as desired,  and thus we have completed  the proof of Lemma \ref{lemma2}.

\section{Proof  of 
	Existence Theorem}\label{Sec:Case 1-3}

The proof of Theorem \ref{thm1} is divided into four cases according to the value of $\alpha$. For each case, 
we first construct a classically strict  sub-solution $W_k$ by using the eigenvalue estimates in quadratic form;  Lemma \ref{lemma2} is then employed to establish the existence of a convex viscosity solution $u$, while the Comparison Principle (Lemma \ref{lemma1}) ensures that $u\geq W_k$; finally, we apply Lemma \ref{lemma:dx} to obtain the lower bound estimates for $u$ via the distance function $d(x)$.

\subsection{Case 1: $\alpha\in[0,+\infty)$}

Denote 
\begin{equation*} 
W_k=C\phi_0+\psi_k,
\end{equation*}
where $C>0$ is a constant to be determined.  
Then we have
\begin{equation*} 
	D^2	W_k=CD^2\phi_0+D^2\psi_k.
\end{equation*}

By Remark \ref{remark:phi-k}, Lemma \ref{lemma:eigenvalue} and Remark \ref{remark:rho}, we can derive  
\begin{align*}
\lambda_{\min}(D^2W_k)&=\min_{|\bm{\xi}|=1}\,\bm{\xi}^TD^2W_k\,\bm{\xi}=\min_{|\bm{\xi}|=1}\,\left(C\bm{\xi}^TD^2\phi_0\,\bm{\xi}+\bm{\xi}^TD^2\psi_k\,\bm{\xi}\right)\geq C\min_{|\bm{\xi}|=1}\,\bm{\xi}^TD^2\phi_0\,\bm{\xi}\geq C\underline{\rho_0},\\
\lambda_{\max}(D^2W_k)&=\max_{|\bm{\xi}|=1}\,\bm{\xi}^TD^2W_k\,\bm{\xi}=\max_{|\bm{\xi}|=1}\,\left(C\bm{\xi}^TD^2\phi_0\,\bm{\xi}+\bm{\xi}^TD^2\psi_k\,\bm{\xi}\right)\geq C\max_{|\bm{\xi}|=1}\,\bm{\xi}^TD^2\phi_0\,\bm{\xi}\geq  C\underline{\rho_0},
\end{align*}

For any $x\in\Omega$, using the assumptions  $(\mathbf{A_2})$-$(\mathrm{i})$ and $(\mathbf{A_3})$ together with the fact $d(x)\leq \operatorname{diam}(\Omega)$, we can infer 
\begin{align*}
&\ \ \ \ F\left(\lambda_1(D^2 W_k),\cdots,\lambda_n(D^2 W_k)\right)\cdot\left[g(x)(d(x))^{\alpha}\right]^{-1}\\
&\geq \Lambda_0\left(\lambda_{\min}(D^2 W_k)\right)^{a}\left(\lambda_{\max}(D^2 W_k)\right)^{b}\cdot M_0^{-1}(d(x))^{-\alpha}\\
&\geq \Lambda_0\left(C\underline{\rho_0}\right)^{a}\left(C\underline{\rho_0}\right)^{b}\cdot M_0^{-1}(\operatorname{diam}(\Omega))^{-\alpha}\\
&= C^{a+b} \Lambda_0 M_0^{-1} \underline{\rho_0}^{a+b}(\operatorname{diam}(\Omega))^{-\alpha},\ \ \forall x\in\Omega.
\end{align*}
Hence there exists some constant $C_0>0$ such that for any $C\geq C_0$ (without loss of generality, we can take $C=C_0$ here), we have 
\[F\left(\lambda_1(D^2 W_k),\cdots,\lambda_n(D^2 W_k)\right)\cdot\left[g(x)(d(x))^{\alpha}\right]^{-1}>1,\ \ \forall x\in\Omega,\]
i.e. 
\begin{equation}\label{case1-F}
	F\left(\lambda_1(D^2 W_k),\cdots,\lambda_n(D^2 W_k)\right) >g(x)(d(x))^{\alpha},\ \ \forall x\in\Omega.
\end{equation}
Up to now, it is evident that 
\begin{equation}\label{eq:W-case1-new}
	W_k=C_0\phi_0+\psi_k
\end{equation} is a classically strict sub-solution to  \eqref{equation} over $\Omega$.

By Lemma \ref{lemma:uniform-convex}, there holds $\phi_0\big|_{\partial\Omega}=0$. In view of Lemma \ref{lemma:approximation}, we have $\psi_k\big|_{\partial\Omega}=\varphi$. 
Then 
it is  clear from \eqref{eq:W-case1-new}   that $W_k\in C(\overline{\Omega})$ and
\begin{equation*}
	W_k\big|_{\partial\Omega}=\psi_k\big|_{\partial\Omega}=\varphi.
\end{equation*}
 Now, by virtue of \eqref{case1-F} and Lemma \ref{lemma2}, the equation \eqref{equation}    admits a convex viscosity solution $u\in C(\overline{\Omega})$ satisfying $u\big|_{\partial\Omega}= \varphi$, i.e. the problem \eqref{equation}-\eqref{bcondition} admits a viscosity solution $u\in C(\overline{\Omega})$.  Using \eqref{case1-F}  again, we can obtain from the Comparison Principle (Lemma \ref{lemma1}) that  
\begin{equation*} 
	W_k(x)\leq  u(x), \ \ \ \  \forall x\in\overline{\Omega},
\end{equation*}
i.e. 
\begin{equation*}
 u(x)\geq C_0\phi_0(x)+\psi_k(x), \ \ \ \  \forall x\in\overline{\Omega}.
\end{equation*}

Due to Lemma \ref{lemma:dx}, we derive
\begin{equation*}
d(x)\geq \left\|\phi_0\right\|^{-1}_{C^2(\overline{\Omega})}\cdot\left(-\phi_0(x)\right),\ \ \ \  \forall x\in\overline{\Omega},
\end{equation*}
that is, 
\begin{equation*}
	\phi_0(x)\geq \left\|\phi_0\right\|_{C^2(\overline{\Omega})}\cdot\left(-d(x)\right),\ \ \ \  \forall x\in\overline{\Omega}.
\end{equation*}
Hence, we can deduce that 
\begin{equation*}
	u(x)\geq  \psi_k(x)-C_0\left\|\phi_0\right\|_{C^2(\overline{\Omega})} d(x), \ \ \ \  \forall x\in\overline{\Omega}.
\end{equation*}
Denote 
$\overline{C_0}=C_0\left\|\phi_0\right\|_{C^2(\overline{\Omega})} $. Thus, by letting $k\to\infty$, we can conclude that
\begin{equation*}
	u(x)\geq \psi(x)-\overline{C_0}\,d(x), \ \ \forall x\in\overline{\Omega}.
\end{equation*}

\subsection{Case 2: $\alpha\in(-b,0)$}

In this case, we have $-\alpha\in (0,b)$, and then by Lemma \ref{lemma:dx}, we obtain
\begin{equation}\label{dx-case2}
\left(d(x)\right)^{-\alpha}\geq\left\|\phi_0\right\|^{\alpha}_{C^2(\overline{\Omega})}\cdot\left(-\phi_0(x)\right)^{-\alpha},\ \ \ \ \forall x\in\Omega.
\end{equation}

Now we consider
\begin{equation*} 
	W_k=C_1\phi_0+C_2\left(-\phi_0\right)^{1+\theta}+\psi_k,
\end{equation*}
where $C_1>0$, $C_2>0$ and $\theta\in (0,1)$ are constants to be determined. Our first step is to show that there exists $\theta_0\in (0,1)$ such that when $\theta=\theta_0$, if $C_1$ and $C_2$ are sufficiently large, $W_k$ is a classically  strict sub-solution to  \eqref{equation}  over $\Omega$.

 It is clear that 
\begin{align*} 
	DW_k&=C_1D\phi_0-C_2(1+\theta)\left(-\phi_0\right)^{\theta}D\phi_0+D\psi_k,\\
	D^2	W_k&=\left[C_1-C_2(1+\theta)\left(-\phi_0\right)^{\theta}\right]D^2\phi_0+C_2(1+\theta)\theta\left(-\phi_0\right)^{\theta-1}D\phi_0\otimes D\phi_0+D^2\psi_k.
\end{align*}
Without loss of generality, we let 
\begin{equation*}
C_2=\frac{C_1}{2(1+\theta)\eta_0{}^\theta},
\end{equation*} 
where $C_1$ and $\theta$ are still to be determined. By \eqref{phi-0-2}, we have $0<-\phi_0\leq \eta_0$. Now we can derive
\begin{align*}
C_1-C_2(1+\theta)\left(-\phi_0\right)^{\theta}&\geq \frac{C_1}{2},\\
C_2(1+\theta)\theta&= \frac{C_1\theta}{2\eta_0{}^\theta}.
\end{align*}
Hence 
\begin{align*}
D^2	W_k&\geq \frac{C_1}{2}D^2\phi_0+\frac{C_1\theta}{2\eta_0{}^\theta}\left(-\phi_0\right)^{\theta-1}D\phi_0\otimes D\phi_0+D^2\psi_k.
\end{align*}
Similar to the last case, using Lemma \ref{lemma:eigenvalue}, Remark \ref{remark:phi-k},  Remark \ref{remark:rho} and Remark \ref{remark:product}, we can deduce that 
\begin{align*}
	\lambda_{\min}(D^2W_k)&=\min_{|\bm{\xi}|=1}\,\bm{\xi}^TD^2W_k\,\bm{\xi} \geq \frac{C_1}{2}\min_{|\bm{\xi}|=1}\,\bm{\xi}^TD^2\phi_0\,\bm{\xi}\geq\frac{C_1}{2}\underline{\rho_0},\stepcounter{equation}\tag{\theequation}\label{lambda-min}\\
	\lambda_{\max}(D^2W_k)&=\max_{|\bm{\xi}|=1}\,\bm{\xi}^TD^2W_k\,\bm{\xi} \geq \frac{C_1\theta}{2\eta_0{}^\theta}\left(-\phi_0\right)^{\theta-1}\max_{|\bm{\xi}|=1}\,\bm{\xi}^TD\phi_0\otimes D\phi_0\,\bm{\xi}= \frac{C_1\theta}{2\eta_0{}^\theta}\left(-\phi_0\right)^{\theta-1}|D\phi_0|^2.\stepcounter{equation}\tag{\theequation}\label{lambda-max}
\end{align*}

Now we need to  bound $|D\phi_0|$ in the estimate of $\lambda_{\max}(D^2W_k)$ carefully. For this purpose, 
we split $\Omega$ as two disjoint domains:
\begin{equation*}
\Omega:=\Omega_1\cup\Omega_2,
\end{equation*}
where
\begin{equation*}
\Omega_1:=\left\{x\in\Omega:-\frac{\eta_0}{2}\leq\phi_0(x)<0\right\}
\end{equation*}
represents the domain near the boundary,
and
\begin{equation*}
\Omega_2:=\left\{x\in\Omega:-\eta_0\leq\phi_0(x)<-\frac{\eta_0}{2}\right\}
\end{equation*}
represents  the domain away from the boundary.

\subsection*{Case 2-1: In $\Omega_1$, $D\phi_0$ is nondegenerate.} We claim that 
\begin{equation}\label{claim-case21}
|D\phi_0(x)|\geq\frac{\eta_0}{2l_0},\ \ \ \ \forall x\in\Omega_1.
\end{equation}
Now we prove this claim. By virtue of \eqref{phi-0-1} and \eqref{phi-0-2}, we have $\phi_0(x_0)=-\eta_0.$  
For any $x\in\Omega_1$, we let $\bm{e}=\frac{x-x_0}{|x-x_0|}$ and $L_{\bm{e}}=\{y=x+t\bm{e}:\, t\in\mathbb{R}\}$. We notice that $\phi_0$ restricted to the segment $L_{\bm{e}}\cap\partial\Omega$ is also a convex function. Therefore we obtain 
\begin{equation*}
|D\phi_0(x)|\geq \bm{e}\cdot D \phi_0(x)\geq\frac{\phi_0(x)-\phi_0(x_0)}{|x-x_0|}\geq \frac{-\frac{\eta_0}{2}-(-\eta_0)}{l_0}=\frac{\eta_0}{2l_0},\ \ \ \ \forall x\in\Omega_1,
\end{equation*}
which proves the claim.

For any $x\in\Omega_1$, using the assumptions  $(\mathbf{A_2})$-$(\mathrm{i})$ and $(\mathbf{A_3})$ together with \eqref{dx-case2}, \eqref{lambda-min}, \eqref{lambda-max} and \eqref{claim-case21}, we infer that
\begin{align*}
	&\ \ \ \ F\left(\lambda_1(D^2 W_k),\cdots,\lambda_n(D^2 W_k)\right)\cdot\left[g(x)(d(x))^{\alpha}\right]^{-1}\\
	&\geq \Lambda_0\left(\lambda_{\min}(D^2 W_k)\right)^{a}\left(\lambda_{\max}(D^2 W_k)\right)^{b}\cdot M_0^{-1}\left(d(x)\right)^{-\alpha}\\
	&\geq \Lambda_0  \left(\frac{C_1}{2}\underline{\rho_0}\right)^a\left(\frac{C_1\theta}{2\eta_0{}^\theta}\left(-\phi_0\right)^{\theta-1}\left(\frac{\eta_0}{2l_0}\right)^2\right)^bM_0^{-1}\left\|\phi_0\right\|^{\alpha}_{C^2(\overline{\Omega})}\cdot\left(-\phi_0(x)\right)^{-\alpha}\\
	&= \frac{1}{2^{a+3b}}C_1^{a+b} \Lambda_0 M_0^{-1} \underline{\rho_0}^a \theta^b\eta_0{}^{(2-\theta)b}l_0{}^{-2b}
\left\|\phi_0\right\|^{\alpha}_{C^2(\overline{\Omega})}\cdot\left(-\phi_0\right)^{b(\theta-1)-\alpha},\ \ \forall x\in\Omega_1.
\end{align*}

Let us take $\theta_0$ such that $b(\theta_0-1)-\alpha=0$, i.e. 
\begin{equation*}
\theta_0=\frac{b+\alpha}{b}\in(0,1).
\end{equation*}
 When $\theta=\theta_0$, it is now clear that
\begin{align*}
&\ \ \ \  F\left(\lambda_1(D^2 W_k),\cdots,\lambda_n(D^2 W_k)\right)\cdot\left[g(x)(d(x))^{\alpha}\right]^{-1}\\
&	\geq \frac{1}{2^{a+3b}}C_1^{a+b} \Lambda_0 M_0^{-1} \underline{\rho_0}^a \theta_0{}^b\eta_0{}^{(2-\theta_0)b}l_0{}^{-2b}
	\left\|\phi_0\right\|^{\alpha}_{C^2(\overline{\Omega})},\ \ \forall x\in\Omega_1.
\end{align*}
Hence there exists some constant $C_{2,1}>0$ such that for any $C_1\geq C_{2,1}$, we have 
\[F\left(\lambda_1(D^2 W_k),\cdots,\lambda_n(D^2 W_k)\right)\cdot\left[g(x)(d(x))^{\alpha}\right]^{-1}>1,\ \ \forall x\in\Omega_1,\]
i.e. 
\begin{equation}\label{case2-F}
	F\left(\lambda_1(D^2 W_k),\cdots,\lambda_n(D^2 W_k)\right) >g(x)(d(x))^{\alpha},\ \ \forall x\in\Omega_1.
\end{equation}

\subsection*{Case 2-2: In $\Omega_2$, $D\phi_0$  can be degenerate.} 
By virtue of \eqref{phi-0-1} and \eqref{phi-0-2}, 
$\phi_0$ indeed takes its minimum at $x_0\in\Omega_2$. Then we have $D\phi_0(x_0)=0$, and hence we need to reconsider $\lambda_{\max}(D^2W_k)$ in $\Omega_2$:
\begin{equation}\label{lambda-max-2}
\lambda_{\max}(D^2W_k)=\max_{|\bm{\xi}|=1}\,\bm{\xi}^TD^2W_k\,\bm{\xi} \geq \frac{C_1}{2}\max_{|\bm{\xi}|=1}\,\bm{\xi}^TD^2\phi_0\,\bm{\xi}\geq\frac{C_1}{2}\underline{\rho_0}.
\end{equation}

For any $x\in\Omega_2$, we note that 
\begin{equation*} 
	 -\phi_0(x)>\frac{\eta_0}{2}.
\end{equation*}
Then using the assumptions  $(\mathbf{A_2})$-$(\mathrm{i})$ and $(\mathbf{A_3})$ together with \eqref{dx-case2}, \eqref{lambda-min} and \eqref{lambda-max-2}, we obtain 
\begin{align*}
	&\ \ \ \ F\left(\lambda_1(D^2 W_k),\cdots,\lambda_n(D^2 W_k)\right)\cdot\left[g(x)(d(x))^{\alpha}\right]^{-1}\\
	&\geq \Lambda_0\left(\lambda_{\min}(D^2 W_k)\right)^{a}\left(\lambda_{\max}(D^2 W_k)\right)^{b}\cdot M_0^{-1}\left(d(x)\right)^{-\alpha}\\
	&\geq \Lambda_0  \left(\frac{C_1}{2}\underline{\rho_0}\right)^a\left(\frac{C_1}{2}\underline{\rho_0}
	\right)^bM_0^{-1}\left\|\phi_0\right\|^{\alpha}_{C^2(\overline{\Omega})}\cdot\left(-\phi_0(x)\right)^{-\alpha}\\
	&\geq \Lambda_0  \left(\frac{C_1}{2}\underline{\rho_0}\right)^a\left(\frac{C_1}{2}\underline{\rho_0}
	\right)^bM_0^{-1}\left\|\phi_0\right\|^{\alpha}_{C^2(\overline{\Omega})}\cdot\left(\frac{\eta_0}{2}\right)^{-\alpha}\\
	&= \frac{1}{2^{a+b-\alpha}}C_1^{a+b} \Lambda_0 M_0^{-1} \underline{\rho_0}^{a+b} 
	\left\|\phi_0\right\|^{\alpha}_{C^2(\overline{\Omega})}\cdot\eta_0{}^{-\alpha},\ \ \forall x\in\Omega_2.
\end{align*}
Hence there exists some constant $C_{2,2}>0$ such that for any $C_1\geq C_{2,2}$, we have 
\[F\left(\lambda_1(D^2 W_k),\cdots,\lambda_n(D^2 W_k)\right)\cdot\left[g(x)(d(x))^{\alpha}\right]^{-1}>1,\ \ \forall x\in\Omega_2,\]
i.e. 
\begin{equation}\label{case2-F-2}
	F\left(\lambda_1(D^2 W_k),\cdots,\lambda_n(D^2 W_k)\right) >g(x)(d(x))^{\alpha},\ \ \forall x\in\Omega_2.
\end{equation}

Up to now, combining \eqref{case2-F} and \eqref{case2-F-2} together, we can conclude that for any $C_1\geq C_0:=\max\{C_{2,1},C_{2,2}\}$  (without loss of generality, we can take $C_1=C_0$ here), there holds
\begin{equation}\label{case2-F-3}
	F\left(\lambda_1(D^2 W_k),\cdots,\lambda_n(D^2 W_k)\right) >g(x)(d(x))^{\alpha},\ \ \forall x\in\Omega_1\cup\Omega_2=\Omega.
\end{equation}
Therefore we obtain that 
\begin{equation}\label{eq:W-case2-new}
	W_k=C_0\phi_0+\frac{C_0}{2(1+\theta_0)\eta_0{}^{\theta_0}}\left(-\phi_0\right)^{1+\theta_0}+\psi_k
\end{equation} is a classically strict sub-solution to \eqref{equation} over $\Omega$.

By \eqref{eq:W-case2-new}, Lemma \ref{lemma:approximation}  and Lemma \ref{lemma:uniform-convex}, we have $W_k\in C(\overline{\Omega})$ and
\begin{equation*}
	W_k\big|_{\partial\Omega}=\psi_k\big|_{\partial\Omega}=\varphi.
\end{equation*}
Now, by virtue of \eqref{case2-F-3} and Lemma \ref{lemma2}, the equation \eqref{equation}    admits a convex viscosity solution $u\in C(\overline{\Omega})$ satisfying $u\big|_{\partial\Omega}= \varphi$, i.e. the problem \eqref{equation}-\eqref{bcondition} admits a viscosity solution $u\in C(\overline{\Omega})$.  Using \eqref{case2-F-3}  again, we can obtain from the Comparison Principle (Lemma \ref{lemma1}) that  
\begin{equation*} 
	W_k(x)\leq  u(x), \ \ \ \  \forall x\in\overline{\Omega}.
\end{equation*}
Then by the definition \eqref{eq:W-case2-new} and Lemma \ref{lemma:dx}, we can derive
\begin{align*}
	u(x)
	&\geq C_0\phi_0(x)+\frac{C_0}{2(1+\theta_0)\eta_0{}^{\theta_0}}\left(-\phi_0(x)\right)^{1+\theta_0}+\psi_k(x)\\
	&\geq C_0\phi_0(x)+\psi_k(x)\\
	&\geq C_0\left\|\phi_0\right\|_{C^2(\overline{\Omega})}\cdot\left(-d(x)\right)+\psi_k(x), \ \ \ \  \forall x\in\overline{\Omega}.
\end{align*}
Denote 
$\overline{C_0}=C_0\left\|\phi_0\right\|_{C^2(\overline{\Omega})} $. Thus, by letting $k\to\infty$, we arrive at the result that
\begin{equation*}
	u(x)\geq \psi(x)-\overline{C_0}\,d(x), \ \ \forall x\in\overline{\Omega}.
\end{equation*}

To conclude, the above Case 1 and Case 2 lead us to Theorem \ref{thm1} (i).

\subsection{Case 3: $\alpha=-b$}

We are going to construct a  classically strict  sub-solution in the following form:
\begin{equation}\label{eq:W-case3}
	W_k=-C(-\overline{\phi_0}) \left[-\log(-\overline{\phi_0})\right]^\theta
	+\psi_k,
\end{equation}
where $C>0$  and $\theta\in (0,1]$ are constants to be determined, and 
\begin{equation*}
\overline{\phi_0}=\frac{\phi_0}{3\eta_0}.
\end{equation*}
By virtue of \eqref{phi-0-1} and \eqref{phi-0-2}, we note that $-\overline{\phi_0}(x)\in (0,\frac{1}{3})$ for any $x\in\Omega$. 
Along with Lemma \ref{lemma:dx}, we also have
\begin{align*}
\left(d(x)\right)^{b}
&\geq\left\|\phi_0\right\|^{-b}_{C^2(\overline{\Omega})}\cdot\left(-\phi_0(x)\right)^{b},\\
&\geq\left\|\phi_0\right\|^{-b}_{C^2(\overline{\Omega})}\cdot\left(-3\eta_0\overline{\phi_0}(x)\right)^{b},\ \ \ \ \forall x\in\Omega.\stepcounter{equation}\tag{\theequation}\label{dx-case3}
\end{align*}

By \eqref{eq:W-case3}, direct computations give rise to 
\begin{align*}
DW_k&=\left\{C\left[-\log(-\overline{\phi_0})\right]^\theta-C\theta\left[-\log(-\overline{\phi_0})\right]^{\theta-1}\right\}\cdot\nabla\overline{\phi_0}+D\psi_k,\\
D^2W_k&=\left\{C\left[-\log(-\overline{\phi_0})\right]^\theta-C\theta\left[-\log(-\overline{\phi_0})\right]^{\theta-1}\right\}\cdot D^2\overline{\phi_0}\\
&\ \ \ \ +\left\{C\theta(-\overline{\phi_0})^{-1}\left[-\log(-\overline{\phi_0})\right]^{\theta-1}-C\theta(\theta-1)(-\overline{\phi_0})^{-1}\left[-\log(-\overline{\phi_0})\right]^{\theta-2}\right\}\cdot D\overline{\phi_0} \otimes D\overline{\phi_0} +\nabla^2\psi_k.\stepcounter{equation}\tag{\theequation}\label{D2-case31}
\end{align*}

In order to  bound $-\log(-\overline{\phi_0})$, we also split $\Omega$ as two disjoint domains.
\begin{equation*}
	\Omega:=\Omega_3\cup\Omega_4.
\end{equation*}
Here, we use
\begin{equation*}
	\Omega_3:=\left\{x\in\Omega:-\overline{\eta_0}\leq\phi_0(x)<0\right\}
\end{equation*}
to represent the domain near the boundary,
and use
\begin{equation*}
	\Omega_4:=\left\{x\in\Omega:-\eta_0\leq\phi_0(x)<-\overline{\eta_0}\right\}
\end{equation*}
to represent the domain away from the boundary, where 
\begin{equation*}
\overline{\eta_0}=\min\{2e^{-2\theta}\eta_0,\frac{\eta_0}{2}\}.
\end{equation*}

\subsection*{Case 3-1: In $\Omega_3$,  $D\phi_0$ is nondegenerate.}

 We also claim that 
\begin{equation}\label{claim-case31}
	|D\phi_0(x)|\geq\frac{\eta_0}{2l_0},\ \ \ \ \forall x\in\Omega_3.
\end{equation}
In fact, this claim can be proved in a similar manner as \eqref{claim-case21} with only   one modification due to the definition of $\Omega_3$:
\begin{equation*}
	|D\phi_0(x)| \geq\frac{\phi_0(x)-\phi_0(x_0)}{|x-x_0|}\geq \frac{-\overline{\eta_0}-(-\eta_0)}{l_0}\geq \frac{-\frac{\eta_0}{2}-(-\eta_0)}{l_0}=\frac{\eta_0}{2l_0},\ \ \ \ \forall x\in\Omega_3.
\end{equation*}

For $x\in\Omega_3$, it is obvious that $-2e^{-2\theta}\eta_0\leq\phi_0<0$, then $0<-\overline{\phi_0}\leq e^{-2\theta}$, and hence
\begin{equation}\label{condition-1}
-\log(-\overline{\phi_0})\geq 2\theta.
\end{equation}
It follows that 
\begin{equation*}
C\left[-\log(-\overline{\phi_0})\right]^\theta=C\left[-\log(-\overline{\phi_0})\right]\cdot \left[-\log(-\overline{\phi_0})\right]^{\theta-1}\geq 2C\theta\left[-\log(-\overline{\phi_0})\right]^{\theta-1},
\end{equation*}
which implies
\begin{equation*}
C\theta\left[-\log(-\overline{\phi_0})\right]^{\theta-1}\leq \frac{1}{2} C\left[-\log(-\overline{\phi_0})\right]^\theta.
\end{equation*}
Together with \eqref{condition-1}, this yields
\begin{equation}\label{condition-2}
\frac{1}{2} C\left[-\log(-\overline{\phi_0})\right]^\theta\leq C\left[-\log(-\overline{\phi_0})\right]^\theta-C\theta\left[-\log(-\overline{\phi_0})\right]^{\theta-1} \leq C\left[-\log(-\overline{\phi_0})\right]^\theta.
\end{equation}
Since $\theta\in (0,1]$, we also have 
\begin{equation}\label{condition-3}
	-C\theta(\theta-1)(-\overline{\phi_0})^{-1}\left[-\log(-\overline{\phi_0})\right]^{\theta-2}\geq 0.
\end{equation}

Based on Remark \ref{remark:rho} and Remark \ref{remark:product}, we note that 
\begin{align*}
 D^2\overline{\phi_0}&=\frac{1}{3\eta_0}D^2\phi_0>0,\\
 D\overline{\phi_0} \otimes D\overline{\phi_0}&= \frac{1}{9\eta_0{}^2}  D \phi_0\otimes D \phi_0\geq 0.
\end{align*}
Hence,  
using \eqref{condition-2} and \eqref{condition-3}  leads us to 
\begin{align*}
	D^2W_k\geq \frac{1}{2} C\left[-\log(-\overline{\phi_0})\right]^\theta\cdot D^2\overline{\phi_0}  + C\theta(-\overline{\phi_0})^{-1}\left[-\log(-\overline{\phi_0})\right]^{\theta-1} \cdot D\overline{\phi_0} \otimes D\overline{\phi_0} +\nabla^2\psi_k.
\end{align*}

Now we are ready to deduce that
\begin{align*}
	\lambda_{\min}(D^2W_k)&=\min_{|\bm{\xi}|=1}\,\bm{\xi}^TD^2W_k\,\bm{\xi}\\
&\geq \min_{|\bm{\xi}|=1}\,\bm{\xi}^T\left(\frac{1}{2} C\left[-\log(-\overline{\phi_0})\right]^\theta\cdot D^2\overline{\phi_0}\right)\bm{\xi}\\
& = \frac{1}{2} C\left[-\log(-\overline{\phi_0})\right]^\theta\min_{|\bm{\xi}|=1}\,\bm{\xi}^TD^2\overline{\phi_0}\, \bm{\xi}\\
&\geq \frac{1}{6\eta_0} C\left[-\log(-\overline{\phi_0})\right]^\theta\min_{|\bm{\xi}|=1}\,\bm{\xi}^TD^2 \phi_0\, \bm{\xi}\\
&\geq\frac{1}{6\eta_0} C\left[-\log(-\overline{\phi_0})\right]^\theta\underline{\rho_0}.\stepcounter{equation}\tag{\theequation}\label{lambda-min-case31}
\end{align*}
By virtue of 
Remark \ref{remark:product} and 
\eqref{claim-case31}, we also obtain
\begin{align*}
	\lambda_{\max}(D^2W_k)&=\max_{|\bm{\xi}|=1}\,\bm{\xi}^TD^2W_k\,\bm{\xi}\\
	&\geq \max_{|\bm{\xi}|=1}\,\bm{\xi}^T\left(C\theta(-\overline{\phi_0})^{-1}\left[-\log(-\overline{\phi_0})\right]^{\theta-1} \cdot D\overline{\phi_0} \otimes D\overline{\phi_0}\right)\bm{\xi}\\
	& = C\theta(-\overline{\phi_0})^{-1}\left[-\log(-\overline{\phi_0})\right]^{\theta-1}\max_{|\bm{\xi}|=1}\,\bm{\xi}^T D\overline{\phi_0} \otimes D\overline{\phi_0}\, \bm{\xi}\\
	&\geq\frac{1}{9\eta_0{}^2} C\theta(-\overline{\phi_0})^{-1}\left[-\log(-\overline{\phi_0})\right]^{\theta-1}   \max_{|\bm{\xi}|=1}\,\bm{\xi}^T D\phi_0\otimes D\phi_0\,\bm{\xi}\\ 
	&=\frac{1}{9\eta_0{}^2} C\theta(-\overline{\phi_0})^{-1}\left[-\log(-\overline{\phi_0})\right]^{\theta-1}  |D \phi_0|^2\\
	&\geq \frac{1}{36l_0{}^2} C\theta(-\overline{\phi_0})^{-1}\left[-\log(-\overline{\phi_0})\right]^{\theta-1}.\stepcounter{equation}\tag{\theequation}\label{lambda-max-case31}
\end{align*}

Using the assumptions  $(\mathbf{A_2})$-$(\mathrm{i})$ and $(\mathbf{A_3})$ together with \eqref{dx-case3}, \eqref{claim-case31}, \eqref{lambda-min-case31} and  \eqref{lambda-max-case31}, we derive
\begin{align*}
	&\ \ \ \ F\left(\lambda_1(D^2 W_k),\cdots,\lambda_n(D^2 W_k)\right)\cdot\left[g(x)(d(x))^{\alpha}\right]^{-1}\\
	&\geq \Lambda_0\left(\lambda_{\min}(D^2 W_k)\right)^{a}\left(\lambda_{\max}(D^2 W_k)\right)^{b}\cdot M_0^{-1}\left(d(x)\right)^{b}\\
	&\geq \Lambda_0  \left\{\frac{1}{6\eta_0} C\left[-\log(-\overline{\phi_0})\right]^\theta\underline{\rho_0}\right\}^a\left\{\frac{1}{36l_0{}^2} C\theta(-\overline{\phi_0})^{-1}\left[-\log(-\overline{\phi_0})\right]^{\theta-1}
\right\}^bM_0^{-1}\left\|\phi_0\right\|^{-b}_{C^2(\overline{\Omega})}\cdot\left(-3\eta_0\overline{\phi_0}(x)\right)^{b}\\
	&= \frac{1}{6^a\times12^b}C^{a+b} \Lambda_0 M_0^{-1} \underline{\rho_0}^a \theta^b 
	\eta_0{}^{b-a}l_0{}^{-2b}\left\|\phi_0\right\|^{-b}_{C^2(\overline{\Omega})}\left[-\log(-\overline{\phi_0})\right]^{(a+b)\theta-b},\ \ \forall x\in\Omega_3.
\end{align*}
Let us take $\theta_0$ such that $(a+b)\theta_0-b=0$, i.e. 
\begin{equation*}
	\theta_0=\frac{b}{a+b}\in (0,1].
\end{equation*}
When $\theta=\theta_0$, it is now clear that
\begin{align*}
	&\ \ \ \  F\left(\lambda_1(D^2 W_k),\cdots,\lambda_n(D^2 W_k)\right)\cdot\left[g(x)(d(x))^{\alpha}\right]^{-1}\\
	&	\geq \frac{1}{6^a\times12^b}C^{a+b} \Lambda_0 M_0^{-1} \underline{\rho_0}^a \theta_0{}^b 
	\eta_0{}^{b-a}l_0{}^{-2b}\left\|\phi_0\right\|^{-b}_{C^2(\overline{\Omega})},\ \ \forall x\in\Omega_3.
\end{align*} 
Hence there exists some constant $C_{3,1}>0$ such that for any $C\geq C_{3,1}$, we have 
\[F\left(\lambda_1(D^2 W_k),\cdots,\lambda_n(D^2 W_k)\right)\cdot\left[g(x)(d(x))^{\alpha}\right]^{-1}>1,\ \ \forall x\in\Omega_3,\]
i.e. 
\begin{equation}\label{case3-F-1}
	F\left(\lambda_1(D^2 W_k),\cdots,\lambda_n(D^2 W_k)\right) >g(x)(d(x))^{\alpha},\ \ \forall x\in\Omega_3.
\end{equation}

\subsection*{Case 3-2: In $\Omega_4$, $D\phi_0$  can be degenerate.} 
By \eqref{phi-0-1} and \eqref{phi-0-2}, 
$\phi_0$ indeed takes its minimum at $x_0\in\Omega_4$. It follows that $D\phi_0(x_0)=0$ and therefore we need to reconsider $\lambda_{\max}(D^2W_k)$.

Now we still take $\theta=\theta_0=\frac{b}{a+b}\in (0,1]$. 
For $x\in\Omega_4$, it is clear that $-\eta_0\leq\phi_0<-\overline{\eta_0}=-\min\{2e^{-2\theta_0}\eta_0,\frac{\eta_0}{2}\}$, then
\begin{equation}\label{condition-4-2}
	\min\left\{e^{-2\theta_0},\frac{1}{6}\right\}<-\overline{\phi_0}\leq \frac{1}{3},
\end{equation}  and hence
\begin{equation}\label{condition-4}
	-\log(-\overline{\phi_0})\geq -\log\frac{1}{3}=\log 3.
\end{equation}
We can infer that
\begin{align*}
	C\left[-\log(-\overline{\phi_0})\right]^{\theta_0}&=C\left[-\log(-\overline{\phi_0})\right]\cdot \left[-\log(-\overline{\phi_0})\right]^{\theta_0-1}\\
	&\geq C\log 3\cdot\left[-\log(-\overline{\phi_0})\right]^{\theta_0-1}\\
	&=\frac{\log 3}{\theta_0}\cdot C\theta_0\left[-\log(-\overline{\phi_0})\right]^{\theta_0-1},
\end{align*}
which yields
\begin{equation*}
	C\theta_0\left[-\log(-\overline{\phi_0})\right]^{\theta_0-1}\leq \frac{\theta_0}{\log 3}\cdot C\left[-\log(-\overline{\phi_0})\right]^{\theta_0}.
\end{equation*}
Using  \eqref{condition-4} again, we obtain 
\begin{equation}\label{condition-5}
 C\left[-\log(-\overline{\phi_0})\right]^{\theta_0}-C\theta\left[-\log(-\overline{\phi_0})\right]^{\theta_0-1} \geq\left(1-\frac{\theta_0}{\log 3}\right)\cdot C\left[-\log(-\overline{\phi_0})\right]^{\theta_0}.
\end{equation}
Here, $1-\frac{\theta_0}{\log 3}>0$ due to $\theta_0\in(0,1]$ and $\log 3>1$. Hence,  
using \eqref{D2-case31}, \eqref{condition-4} and \eqref{condition-5}  leads us to 
\begin{align*}
	D^2W_k\geq \left(1-\frac{\theta_0}{\log 3}\right)\cdot C\left[-\log(-\overline{\phi_0})\right]^{\theta_0}\cdot D^2\overline{\phi_0}  + C\theta_0(-\overline{\phi_0})^{-1}\left[-\log(-\overline{\phi_0})\right]^{\theta_0-1} \cdot D\overline{\phi_0} \otimes D\overline{\phi_0} +\nabla^2\psi_k.
\end{align*}

Now we are ready to estimate $\lambda_{\min}(D^2W_k)$ and $\lambda_{\max}(D^2W_k)$ in $\Omega_4$ in the same manner:
\begin{align*} 
		\lambda_{\max}(D^2W_k)&\geq	\lambda_{\min}(D^2W_k)=\min_{|\bm{\xi}|=1}\,\bm{\xi}^TD^2W_k\,\bm{\xi}\\
	&\geq \min_{|\bm{\xi}|=1}\,\bm{\xi}^T\left(\left(1-\frac{\theta_0}{\log 3}\right)\cdot C\left[-\log(-\overline{\phi_0})\right]^{\theta_0}\cdot D^2\overline{\phi_0}\right)\bm{\xi}\\
	& = \left(1-\frac{\theta_0}{\log 3}\right)\cdot C\left[-\log(-\overline{\phi_0})\right]^{\theta_0}\min_{|\bm{\xi}|=1}\,\bm{\xi}^TD^2\overline{\phi_0}\, \bm{\xi}\\
	&= \frac{1}{3\eta_0}\left(1-\frac{\theta_0}{\log 3}\right)\cdot C\left[-\log(-\overline{\phi_0})\right]^{\theta_0}\min_{|\bm{\xi}|=1}\,\bm{\xi}^TD^2 \phi_0\, \bm{\xi}\\
	&\geq\frac{1}{3\eta_0}\left(1-\frac{\theta_0}{\log 3}\right)\cdot C\left[-\log(-\overline{\phi_0})\right]^{\theta_0}\underline{\rho_0}.\stepcounter{equation}\tag{\theequation}\label{lambda-min-case32}
\end{align*}

Using the assumptions  $(\mathbf{A_2})$-$(\mathrm{i})$ and $(\mathbf{A_3})$ together with \eqref{dx-case3},   \eqref{lambda-min-case32},  \eqref{condition-4-2} and \eqref{condition-4}, we derive 
\begin{align*}
	&\ \ \ \ F\left(\lambda_1(D^2 W_k),\cdots,\lambda_n(D^2 W_k)\right)\cdot\left[g(x)(d(x))^{\alpha}\right]^{-1}\\
	&\geq \Lambda_0\left(\lambda_{\min}(D^2 W_k)\right)^{a}\left(\lambda_{\max}(D^2 W_k)\right)^{b}\cdot M_0^{-1}\left(d(x)\right)^{b}\\
	&\geq \Lambda_0  \left\{\frac{1}{3\eta_0}\left(1-\frac{\theta_0}{\log 3}\right)\cdot C\left[-\log(-\overline{\phi_0})\right]^{\theta_0}\underline{\rho_0}\right\}^a\left\{\frac{1}{3\eta_0}\left(1-\frac{\theta_0}{\log 3}\right)\cdot C\left[-\log(-\overline{\phi_0})\right]^{\theta_0}\underline{\rho_0}
	\right\}^b\\
	&\ \ \ \ \cdot M_0^{-1}\left\|\phi_0\right\|^{-b}_{C^2(\overline{\Omega})}\cdot\left(-3\eta_0\overline{\phi_0}(x)\right)^{b}\\
	&= \frac{1}{3^{a}}\left(1-\frac{\theta_0}{\log 3}\right)^{a+b}\cdot C^{a+b} \Lambda_0 M_0^{-1} \underline{\rho_0}^{a+b}
	\eta_0{}^{-a} \left\|\phi_0\right\|^{-b}_{C^2(\overline{\Omega})}\left[-\log(-\overline{\phi_0})\right]^{(a+b)\theta_0}\cdot\left(-\overline{\phi_0}\right)^{b}\\
	&\geq \frac{1}{3^{a}}\left(1-\frac{\theta_0}{\log 3}\right)^{a+b}\cdot C^{a+b} \Lambda_0 M_0^{-1} \underline{\rho_0}^{a+b}
	\eta_0{}^{-a} \left\|\phi_0\right\|^{-b}_{C^2(\overline{\Omega})}\left(\log 3\right)^{(a+b)\theta_0}\cdot\left(\min\left\{e^{-2\theta_0},\frac{1}{6}\right\}\right)^{b},\ \ \forall x\in\Omega_4.
\end{align*}
Hence there exists some constant $C_{3,2}>0$ such that for any $C\geq C_{3,2}$, we have 
\[F\left(\lambda_1(D^2 W_k),\cdots,\lambda_n(D^2 W_k)\right)\cdot\left[g(x)(d(x))^{\alpha}\right]^{-1}>1,\ \ \forall x\in\Omega_4,\]
i.e. 
\begin{equation}\label{case3-F-2}
	F\left(\lambda_1(D^2 W_k),\cdots,\lambda_n(D^2 W_k)\right) >g(x)(d(x))^{\alpha},\ \ \forall x\in\Omega_4.
\end{equation}

Putting \eqref{case3-F-1} and \eqref{case3-F-2} together, we can summarize that for any $C\geq C_0:=\max\{C_{3,1},C_{3,2}\}$  (without loss of generality, we can take $C=C_0$ here), there holds
\begin{equation}\label{case3-F-3}
	F\left(\lambda_1(D^2 W_k),\cdots,\lambda_n(D^2 W_k)\right) >g(x)(d(x))^{\alpha},\ \ \forall x\in\Omega_3\cup\Omega_4=\Omega.
\end{equation}
Now we obtain that 
\begin{equation}\label{eq:W-case3-new}
	W_k=-C_0(-\overline{\phi_0}) \left[-\log(-\overline{\phi_0})\right]^{\theta_0}
	+\psi_k
\end{equation}
 is a classically strict sub-solution to \eqref{equation} over $\Omega$.

For any $x\in\Omega$, by Lemma \ref{lemma:uniform-convex}, we note that 
\begin{equation*}
\lim_{x\to\partial\Omega}	\left(-\overline{\phi_0}\right)=\lim_{x\to\partial\Omega}\left(-\frac{\phi_0}{3\eta_0}\right)= 0,
\end{equation*}
which further yields
\begin{align*}
\lim_{x\to\partial\Omega}(-\overline{\phi_0}) \left[-\log(-\overline{\phi_0})\right]^{\theta_0}=\lim_{t_1\to 0}t_1 \left(-\log t_1\right)^{\theta_0}=\lim_{t_1\to 0}\frac{\left(\log \frac{1}{t_1}\right)^{\theta_0}}{\frac{1}{t_1}}=\lim_{t_2\to +\infty}\frac{\left(\log t_2\right)^{\theta_0}}{t_2}
=0.
\end{align*}
Then by the definition \eqref{eq:W-case3-new}, Lemma \ref{lemma:approximation} and Lemma \ref{lemma:uniform-convex}, we have  $W_k\in C(\overline{\Omega})$ and
\begin{equation*}
	W_k\big|_{\partial\Omega}=\psi_k\big|_{\partial\Omega}=\varphi.
\end{equation*}

Thus, using \eqref{case3-F-3} and Lemma \ref{lemma2}, the equation \eqref{equation}    admits a convex viscosity solution $u\in C(\overline{\Omega})$ satisfying $u\big|_{\partial\Omega}= \varphi$. Using \eqref{case3-F-3}  again, we can obtain from the Comparison Principle (Lemma \ref{lemma1}) that  
\begin{equation*} 
	W_k(x)\leq  u(x), \ \ \ \  \forall x\in\overline{\Omega}.
\end{equation*}
Then by  definition \eqref{eq:W-case3-new}, we can derive
\begin{equation*}
	u(x)
\geq -C_0(-\overline{\phi_0}(x)) \left[-\log(-\overline{\phi_0}(x))\right]^{\theta_0}+\psi_k(x), \ \ \ \  \forall x\in\overline{\Omega}.
\end{equation*}

By virtue of Lemma \ref{lemma:dx}, we note that 
\begin{equation*}
	-\overline{\phi_0}(x)=\frac{1}{3\eta_0}(-\phi_0(x))\leq \frac{1}{3\eta_0}	\left\|\phi_0\right\|_{C^2(\overline{\Omega})}\cdot d(x),\ \ \ \  \forall x\in\overline{\Omega},
\end{equation*}
and 
\begin{equation*}
	-\overline{\phi_0}(x)=\frac{1}{3\eta_0}(-\phi_0(x))\geq \frac{1}{3\eta_0}\frac{\eta_0}{\operatorname{diam}(\Omega)}\cdot d(x)=\frac{1}{3\operatorname{diam}(\Omega)}\cdot d(x),\ \ \ \  \forall x\in\overline{\Omega}.
\end{equation*}
The latter also gives 
\begin{equation*}
\left[-\log(-\overline{\phi_0}(x))\right]^{\theta_0}\leq \left[-\log\left(\frac{1}{3\operatorname{diam}(\Omega)}\cdot d(x)\right)\right]^{\theta_0},\ \ \ \  \forall x\in\overline{\Omega}.
\end{equation*}
Thus it follows that 
\begin{equation*}
	u(x)
	\geq -\frac{C_0}{3\eta_0}	\left\|\phi_0\right\|_{C^2(\overline{\Omega})}\cdot d(x)\cdot \left[-\log\left(\frac{1}{3\operatorname{diam}(\Omega)}\cdot d(x)\right)\right]^{\theta_0}+\psi_k(x), \ \ \ \  \forall x\in\overline{\Omega}.
\end{equation*}
Denote 
$\overline{C_0}=\frac{C_0}{3\eta_0}	\left\|\phi_0\right\|_{C^2(\overline{\Omega})}$ and $\overline{C_1}=\frac{1}{3\operatorname{diam}(\Omega)}$. Therefore, by letting $k\to\infty$, we can conclude that
\begin{equation*}
	u(x)
	\geq \psi(x)-\overline{C_0} d(x) \left[-\log\left(\overline{C_1} d(x)\right)\right]^{\theta_0}, \ \ \ \  \forall x\in\overline{\Omega},
\end{equation*}
where $\theta_0=\frac{b}{a+b}$. 
This gives rise to Theorem \ref{thm1} (ii).

\subsection{Case 4: $\alpha\in(-a-2b,-b)$}

Now we consider
\begin{equation*} 
	W_k=-C\left(-\phi_0\right)^{\theta}+\psi_k,
\end{equation*}
where $C\geq 0$  and $\theta\in (0,1)$ are constants to be determined. 

It follows that 
\begin{align*} 
	DW_k&=C\theta\left(-\phi_0\right)^{\theta-1}D\phi_0+D\psi_k,\\
	D^2	W_k&=C\theta\left(-\phi_0\right)^{\theta-1}D^2\phi_0+C\theta(1-\theta)\left(-\phi_0\right)^{\theta-2}D\phi_0\otimes D\phi_0+D^2\psi_k.
\end{align*}
Analogously, using Remark \ref{remark:phi-k},  Lemma \ref{lemma:eigenvalue},  Remark \ref{remark:rho} and Remark \ref{remark:product}, we can derive that 
\begin{align*}
	\lambda_{\min}(D^2W_k)&=\min_{|\bm{\xi}|=1}\,\bm{\xi}^TD^2W_k\,\bm{\xi} \geq C\theta\left(-\phi_0\right)^{\theta-1}\min_{|\bm{\xi}|=1}\,\bm{\xi}^TD^2\phi_0\,\bm{\xi}\geq C\theta\left(-\phi_0\right)^{\theta-1}\underline{\rho_0},\stepcounter{equation}\tag{\theequation}\label{lambda-min-case41}\\
	\lambda_{\max}(D^2W_k)&=\max_{|\bm{\xi}|=1}\,\bm{\xi}^TD^2W_k\,\bm{\xi} \geq C\theta(1-\theta)\left(-\phi_0\right)^{\theta-2}\max_{|\bm{\xi}|=1}\,\bm{\xi}^TD\phi_0\otimes D\phi_0\,\bm{\xi}= C\theta(1-\theta)\left(-\phi_0\right)^{\theta-2}|D\phi_0|^2.\stepcounter{equation}\tag{\theequation}\label{lambda-max-case41}
\end{align*}

Similar to \eqref{dx-case2}, we also have  \begin{equation}\label{dx-case4}
	\left(d(x)\right)^{-\alpha}\geq\left\|\phi_0\right\|^{\alpha}_{C^2(\overline{\Omega})}\cdot\left(-\phi_0(x)\right)^{-\alpha},\ \ \ \ \forall x\in\Omega.
\end{equation}
In the same manner as Case 2, we also split $\Omega$ as 
\begin{equation*}
	\Omega=\Omega_1\cup\Omega_2,
\end{equation*}
where
\begin{equation*}
	\Omega_1=\left\{x\in\Omega:-\frac{\eta_0}{2}\leq\phi_0(x)<0\right\},\ \ \ \ \Omega_2=\left\{x\in\Omega:-\eta_0\leq\phi_0(x)<-\frac{\eta_0}{2}\right\}.
\end{equation*} 

\subsection*{Case 4-1: In $\Omega_1$, $D\phi_0$ is nondegenerate.} By \eqref{claim-case21}, we also have
\begin{equation}\label{claim-case41}
	|D\phi_0(x)|\geq\frac{\eta_0}{2l_0},\ \ \ \ \forall x\in\Omega_1.
\end{equation}

For any $x\in\Omega_1$, using the assumptions  $(\mathbf{A_2})$-$(\mathrm{i})$ and $(\mathbf{A_3})$ along with  \eqref{lambda-min-case41}, \eqref{lambda-max-case41}, \eqref{dx-case4} and \eqref{claim-case41}, we obtain 
\begin{align*}
	&\ \ \ \ F\left(\lambda_1(D^2 W_k),\cdots,\lambda_n(D^2 W_k)\right)\cdot\left[g(x)(d(x))^{\alpha}\right]^{-1}\\
	&\geq \Lambda_0\left(\lambda_{\min}(D^2 W_k)\right)^{a}\left(\lambda_{\max}(D^2 W_k)\right)^{b}\cdot M_0^{-1}\left(d(x)\right)^{-\alpha}\\
	&\geq \Lambda_0  \left(C\theta\left(-\phi_0\right)^{\theta-1}\underline{\rho_0}\right)^a\left(C\theta(1-\theta)\left(-\phi_0\right)^{\theta-2}\left(\frac{\eta_0}{2l_0}\right)^2\right)^bM_0^{-1}\left\|\phi_0\right\|^{\alpha}_{C^2(\overline{\Omega})}\cdot\left(-\phi_0(x)\right)^{-\alpha}\\
	&= \frac{1}{2^{2b}}C^{a+b} \Lambda_0 M_0^{-1} \underline{\rho_0}^a \theta^{a+b}(1-\theta)^b\eta_0{}^{2b}l_0{}^{-2b}
	\left\|\phi_0\right\|^{\alpha}_{C^2(\overline{\Omega})}\cdot\left(-\phi_0\right)^{(a+b)\theta-(a+2b+\alpha)},\ \ \forall x\in\Omega_1.
\end{align*}

Let us take $\theta_0$ such that $(a+b)\theta_0-(a+2b+\alpha)=0$, i.e. 
\begin{equation*}
	\theta_0=\frac{a+2b+\alpha}{a+b}\in(0,1).
\end{equation*}
When $\theta=\theta_0$, it is now clear that
\begin{align*}
	&\ \ \ \  F\left(\lambda_1(D^2 W_k),\cdots,\lambda_n(D^2 W_k)\right)\cdot\left[g(x)(d(x))^{\alpha}\right]^{-1}\\
	&	\geq \frac{1}{2^{2b}}C^{a+b} \Lambda_0 M_0^{-1} \underline{\rho_0}^a \theta^{a+b}(1-\theta)^b\eta_0{}^{2b}l_0{}^{-2b}
	\left\|\phi_0\right\|^{\alpha}_{C^2(\overline{\Omega})},\ \ \forall x\in\Omega_1.
\end{align*}
Hence there exists some constant $C_{4,1}>0$ such that for any $C\geq C_{4,1}$, we have 
\[F\left(\lambda_1(D^2 W_k),\cdots,\lambda_n(D^2 W_k)\right)\cdot\left[g(x)(d(x))^{\alpha}\right]^{-1}>1,\ \ \forall x\in\Omega_1,\]
i.e. 
\begin{equation}\label{case4-F}
	F\left(\lambda_1(D^2 W_k),\cdots,\lambda_n(D^2 W_k)\right) >g(x)(d(x))^{\alpha},\ \ \forall x\in\Omega_1.
\end{equation}

\subsection*{Case 4-2: In $\Omega_2$, $D\phi_0$  can be degenerate.} Now we still take $\theta_0=\frac{a+2b+\alpha}{a+b}\in(0,1)$. 

Since $D\phi_0(x_0)=0$ at $x_0\in\Omega_2$, we also need to re-estimate $\lambda_{\max}(D^2W_k)$ in $\Omega_2$:
\begin{align*}
\lambda_{\max}(D^2W_k)&=\max_{|\bm{\xi}|=1}\,\bm{\xi}^TD^2W_k\,\bm{\xi}\\
& \geq \max_{|\bm{\xi}|=1}\,\bm{\xi}^T\left(C\theta_0\left(-\phi_0\right)^{\theta_0-1}D^2\phi_0\right)\bm{\xi}\\
&=C\theta_0\left(-\phi_0\right)^{\theta_0-1}\max_{|\bm{\xi}|=1}\,\bm{\xi}^TD^2\phi_0\,\bm{\xi}\\
&\geq C\theta_0\left(-\phi_0\right)^{\theta_0-1}\underline{\rho_0}.\stepcounter{equation}\tag{\theequation}\label{lambda-max-4}
\end{align*}
Then using the assumptions  $(\mathbf{A_2})$-$(\mathrm{i})$ and $(\mathbf{A_3})$ together with \eqref{dx-case4}, \eqref{lambda-min-case41} and \eqref{lambda-max-4}, we infer that 
\begin{align*}
	&\ \ \ \ F\left(\lambda_1(D^2 W_k),\cdots,\lambda_n(D^2 W_k)\right)\cdot\left[g(x)(d(x))^{\alpha}\right]^{-1}\\
	&\geq \Lambda_0\left(\lambda_{\min}(D^2 W_k)\right)^{a}\left(\lambda_{\max}(D^2 W_k)\right)^{b}\cdot M_0^{-1}\left(d(x)\right)^{-\alpha}\\
	&\geq \Lambda_0  \left(C\theta_0\left(-\phi_0\right)^{\theta_0-1}\underline{\rho_0}\right)^a\left(C\theta_0\left(-\phi_0\right)^{\theta_0-1}\underline{\rho_0}
	\right)^bM_0^{-1}\left\|\phi_0\right\|^{\alpha}_{C^2(\overline{\Omega})}\cdot\left(-\phi_0(x)\right)^{-\alpha}\\
	&=	 C^{a+b} \Lambda_0 M_0^{-1} \underline{\rho_0}^{a+b} \theta_0{}^{a+b}
	\left\|\phi_0\right\|^{\alpha}_{C^2(\overline{\Omega})}\cdot\left(-\phi_0\right)^{(a+b)\theta_0-(a+b+\alpha)},\ \ \forall x\in\Omega_2.	
\end{align*}
	
Since $\theta_0=\frac{a+2b+\alpha}{a+b}$, there holds
\begin{equation*}
	(a+b)\theta_0-(a+b+\alpha)=b.
\end{equation*}
For any $x\in\Omega_2$, we note that 
\begin{equation*} 
	-\phi_0(x)>\frac{\eta_0}{2}.
\end{equation*}
Thus we have 
\begin{equation*}
\left(-\phi_0\right)^{(a+b)\theta_0-(a+b+\alpha)}=\left(-\phi_0\right)^{b}>\left(\frac{\eta_0}{2}\right)^b.
\end{equation*}
Now  we can derive
\begin{align*}
  F\left(\lambda_1(D^2 W_k),\cdots,\lambda_n(D^2 W_k)\right)\cdot\left[g(x)(d(x))^{\alpha}\right]^{-1}\geq \frac{1}{2^b}	 C^{a+b} \Lambda_0 M_0^{-1} \underline{\rho_0}^a\overline{\rho_0}^b  \theta_0{}^{a+b}
	\left\|\phi_0\right\|^{\alpha}_{C^2(\overline{\Omega})}\cdot\eta_0{}^b,\ \ \forall x\in\Omega_2.	
\end{align*}
Hence there exists some constant $C_{4,2}>0$ such that for any $C\geq C_{4,2}$, we have 
\[F\left(\lambda_1(D^2 W_k),\cdots,\lambda_n(D^2 W_k)\right)\cdot\left[g(x)(d(x))^{\alpha}\right]^{-1}>1,\ \ \forall x\in\Omega_2,\]
i.e. 
\begin{equation}\label{case4-F-2}
	F\left(\lambda_1(D^2 W_k),\cdots,\lambda_n(D^2 W_k)\right) >g(x)(d(x))^{\alpha},\ \ \forall x\in\Omega_2.
\end{equation}

 Up to now, combining \eqref{case4-F} and \eqref{case4-F-2} together, we can conclude that for any $C\geq C_0:=\max\{C_{4,1},C_{4,2}\}$  (without loss of generality, we can take $C=C_0$ here), there holds
 \begin{equation}\label{case4-F-3}
 	F\left(\lambda_1(D^2 W_k),\cdots,\lambda_n(D^2 W_k)\right) >g(x)(d(x))^{\alpha},\ \ \forall x\in\Omega_1\cup\Omega_2=\Omega.
 \end{equation}
Then it is clear that  
\begin{equation}\label{eq:W-case4-new}
	W_k=-C_0\left(-\phi_0\right)^{\theta_0}+\psi_k
\end{equation} is a classically strict sub-solution to \eqref{equation} over $\Omega$.

Using the definition \eqref{eq:W-case4-new}, Lemma \ref{lemma:approximation} and Lemma \ref{lemma:uniform-convex}, we have $W_k\in C(\overline{\Omega})$ and
 \begin{equation*}
 	W_k\big|_{\partial\Omega}=\psi_k\big|_{\partial\Omega}=\varphi.
 \end{equation*}
 Now, by   \eqref{case4-F-3} and Lemma \ref{lemma2}, the equation \eqref{equation}    admits a convex viscosity solution $u\in C(\overline{\Omega})$ satisfying $u\big|_{\partial\Omega}= \varphi$, i.e.  the problem \eqref{equation}-\eqref{bcondition} admits a viscosity solution $u\in C(\overline{\Omega})$.  Using \eqref{case4-F-3}  again, we can deduce from the Comparison Principle (Lemma \ref{lemma1}) that  
 \begin{equation*} 
 	W_k(x)\leq  u(x), \ \ \ \  \forall x\in\overline{\Omega}.
 \end{equation*}
Hence we can infer from \eqref{eq:W-case4-new} and Lemma \ref{lemma:dx}  that 
 \begin{align*}
 	u(x)
 	&\geq -C_0\left(-\phi_0(x)\right)^{\theta_0}+\psi_k(x)\\
 	&\geq -C_0\left\|\phi_0\right\|_{C^2(\overline{\Omega})}^{\theta_0}\cdot \left(d(x)\right)^{\theta_0}+\psi_k(x), \ \ \ \  \forall x\in\overline{\Omega}.
 \end{align*}
 Denote 
 $\overline{C_0}=C_0\left\|\phi_0\right\|_{C^2(\overline{\Omega})}^{\theta_0} $. Thus, by letting $k\to\infty$, we get
 \begin{equation*}
 	u(x)\geq \psi(x)-\overline{C_0}\left(d(x)\right)^{\theta_0}, \ \ \forall x\in\overline{\Omega},
 \end{equation*}
where $\theta_0=\frac{a+2b+\alpha}{a+b}$. 
This yields Theorem \ref{thm1} (iii).

Thus we have completed the proof of Theorem \ref{thm1}.

\section{Proof of Nonexistence Theorem}\label{Sec:Case 5}

Let us prove Theorem \ref{thm2} by contradiction. We suppose that  
 the problem \eqref{equation}-\eqref{bcondition} admits a viscosity solution $u$. By Definition \ref{def:vis}, we have $u\in C(\overline{\Omega})$. 

Consider
\begin{equation}\label{eq:W-case5}
	W=-C\left(-\phi_0\right)^{\theta}+\max_{\partial\Omega}\varphi,
\end{equation}
where $C> 0$  and $\theta>0$ are constants to be determined. We point out that the term $\max_{\partial\Omega}\varphi$ is indeed a constant. 
Let us turn to prove that for any $C>0$, there exists $\theta=\theta(C)$ sufficiently small such that 
$W$ is a classically strict super-solution to  \eqref{equation}.

It is straightforward to see that 
\begin{align*} 
	DW&=C\theta\left(-\phi_0\right)^{\theta-1}D\phi_0,\\
	D^2	W&=C\theta\left(-\phi_0\right)^{\theta-1}D^2\phi_0+C\theta(1-\theta)\left(-\phi_0\right)^{\theta-2}D\phi_0\otimes D\phi_0.
\end{align*}
Now we can apply Lemma \ref{lemma:eigenvalue},  Remark \ref{remark:rho}, Remark \ref{remark:product} and \eqref{phi0-max} to obtain
\begin{align*}
	\lambda_{\min}(D^2W)&=\min_{|\bm{\xi}|=1}\,\bm{\xi}^TD^2W\,\bm{\xi}=\min_{|\bm{\xi}|=1}\,\bm{\xi}^T\left(C\theta\left(-\phi_0\right)^{\theta-1}D^2\phi_0+C\theta(1-\theta)\left(-\phi_0\right)^{\theta-2}D\phi_0\otimes D\phi_0 \right)\bm{\xi}\\
	&\leq \bm{\xi_1}^T\left(C\theta\left(-\phi_0\right)^{\theta-1}D^2\phi_0+C\theta(1-\theta)\left(-\phi_0\right)^{\theta-2}D\phi_0\otimes D\phi_0 \right)\bm{\xi_1}\\
	&= C\theta\left(-\phi_0\right)^{\theta-1}\bm{\xi_1}^TD^2\phi_0\,\bm{\xi_1}+C\theta(1-\theta)\left(-\phi_0\right)^{\theta-2}\bm{\xi_1}^T D\phi_0\otimes D\phi_0\,\bm{\xi_1}\\
	&= C\theta\left(-\phi_0\right)^{\theta-1}\bm{\xi_1}^TD^2\phi_0\,\bm{\xi_1}\\
	&\leq C\theta\left(-\phi_0\right)^{\theta-1}\overline{\rho_0},\stepcounter{equation}\tag{\theequation}\label{lambda-min-case5}\\
	\lambda_{\max}(D^2W)&=\max_{|\bm{\xi}|=1}\,\bm{\xi}^TD^2W\,\bm{\xi}=\max_{|\bm{\xi}|=1}\,\bm{\xi}^T\left(C\theta\left(-\phi_0\right)^{\theta-1}D^2\phi_0+C\theta(1-\theta)\left(-\phi_0\right)^{\theta-2}D\phi_0\otimes D\phi_0 \right)\bm{\xi}\\
	&\leq C\theta\left(-\phi_0\right)^{\theta-1}\max_{|\bm{\xi}|=1}\,\bm{\xi}^TD^2\phi_0\,\bm{\xi}+C\theta(1-\theta)\left(-\phi_0\right)^{\theta-2}\max_{|\bm{\xi}|=1}\,\bm{\xi}^T D\phi_0\otimes D\phi_0\,\bm{\xi}\\
	&\leq C\theta\left(-\phi_0\right)^{\theta-1}\overline{\rho_0}+C\theta(1-\theta)\left(-\phi_0\right)^{\theta-2}|D\phi_0|^2\\
	&=C\theta \left[\left(-\phi_0\right)\overline{\rho_0}+(1-\theta)|D\phi_0|^2\right]\left(-\phi_0\right)^{\theta-2}\\
	&\leq C\theta \left[\eta_0\overline{\rho_0}+(1-\theta)\|\phi_0\|^2_{C^2(\overline{\Omega})}\right]\cdot\left(-\phi_0\right)^{\theta-2}.\stepcounter{equation}\tag{\theequation}\label{lambda-max-case5}
\end{align*}

In view of Lemma \ref{lemma:dx}, we also notice that 
\begin{equation}\label{dx-case5}
	 d(x)\leq \frac{\operatorname{diam}(\Omega)}{\eta_0}\cdot\left(-\phi_0(x)\right),\ \ \forall x\in\Omega.
\end{equation}
Then using the assumptions  $(\mathbf{A_2})$-$(\mathrm{ii})$ and $(\mathbf{A_3})$ together with  \eqref{lambda-min-case5}, \eqref{lambda-max-case5} and \eqref{dx-case5}, we obtain 
\begin{align*}
	&\ \ \ \ F\left(\lambda_1(D^2 W_k),\cdots,\lambda_n(D^2 W_k)\right)\cdot\left[g(x)(d(x))^{\alpha}\right]^{-1}\\
	&\leq \Lambda_0\left(\lambda_{\min}(D^2 W_k)\right)^{a}\left(\lambda_{\max}(D^2 W_k)\right)^{b}\cdot m_0^{-1}\left(d(x)\right)^{-\alpha}\\
	&\leq \Lambda_0  \left(C\theta\left(-\phi_0\right)^{\theta-1}\overline{\rho_0}\right)^a\left\{C\theta \left[\eta_0\overline{\rho_0}+(1-\theta)\|\phi_0\|^2_{C^2(\overline{\Omega})}\right]\cdot\left(-\phi_0\right)^{\theta-2}
	\right\}^bm_0^{-1}\left(\frac{\operatorname{diam}(\Omega)}{\eta_0}\right)^{-\alpha}\cdot\left(-\phi_0(x)\right)^{-\alpha}\\
	&=	 C^{a+b}\theta^{a+b} \Lambda_0 m_0^{-1}(\operatorname{diam}(\Omega))^{-\alpha}\eta_0{}^\alpha \overline{\rho_0}^a\left[\eta_0\overline{\rho_0}+(1-\theta)\|\phi_0\|^2_{C^2(\overline{\Omega})}\right]^b  
\cdot\left(-\phi_0\right)^{(a+b)\theta-(a+2b+\alpha)},\ \ \forall x\in\Omega.	
\end{align*}

We note  that 
\begin{equation*}
\lim_{\theta\to 0^+}\left[\eta_0\overline{\rho_0}+(1-\theta)\|\phi_0\|^2_{C^2(\overline{\Omega})}\right]^b=\left[\eta_0\overline{\rho_0}+\|\phi_0\|^2_{C^2(\overline{\Omega})}\right]^b.
\end{equation*}
Since $\alpha\leq -a-2b$, we have $-(a+2b+\alpha)\geq 0$. Then it follows from \eqref{phi0-max} that 
\begin{equation*}
\lim_{\theta\to 0^+}\left(-\phi_0\right)^{(a+b)\theta-(a+2b+\alpha)}=\left(-\phi_0\right)^{-(a+2b+\alpha)}\leq \eta_0{}^{-(a+2b+\alpha)}.
\end{equation*}
By virtue of the fact that 
 \begin{equation*}
 	\lim_{\theta\to 0^+}\theta^{a+b} =0,
 \end{equation*}
we are now ready to derive that for any $C>0$, there exists $\theta_0=\theta_0(C)>0$ sufficiently small (without loss of generality, we can require $\theta_0\in(0,1]$) such that for any $\theta\in(0,\theta_0]$, there holds
\begin{equation*}
F\left(\lambda_1(D^2 W),\cdots,\lambda_n(D^2 W)\right)\cdot\left[g(x)(d(x))^{\alpha}\right]^{-1}<1,\ \ \forall x\in\Omega,
\end{equation*}
that is, 
\begin{equation}\label{case5-F}
	F\left(\lambda_1(D^2 W),\cdots,\lambda_n(D^2 W)\right) <g(x)(d(x))^{\alpha},\ \ \forall x\in\Omega,
\end{equation}
Now we take $\theta=\theta_0$. Then we obtain that 
\begin{equation}\label{eq:W-case5-new}
	W=-C\left(-\phi_0\right)^{\theta_0}+\max_{\partial\Omega}\varphi
\end{equation} is a classically strict super-solution to  \eqref{equation} over $\Omega$.

According to the definition \eqref{eq:W-case5-new} and Lemma \ref{lemma:uniform-convex}, we have  $W\in C(\overline{\Omega})$ and
\begin{equation*}
	W\big|_{\partial\Omega}=\max_{\partial\Omega}\varphi\geq\varphi.
\end{equation*}
Therefore, by \eqref{case5-F} and the Comparison Principle (Lemma \ref{lemma1}), we can infer that 
\begin{equation*} 
	W(x)\geq  u(x), \ \ \ \  \forall x\in\overline{\Omega}.
\end{equation*}
In particular, it follows from  \eqref{eq:W-case5-new}, \eqref{phi0-max} and  \eqref{phi-0-2} that 
\begin{align*}
	u(x_0)
	&\leq W(x_0)= -C\left(-\phi_0(x_0)\right)^{\theta_0}+\max_{\partial\Omega}\varphi=-C\eta_0{}^{\theta_0}+\max_{\partial\Omega}\varphi.
\end{align*}
Since $\theta_0\in(0,1]$, we notice that 
\begin{equation*}
\eta_0{}^{\theta_0}\geq \min\{\eta_0{}^1,\eta_0{}^0\}=\min\{\eta_0,1\}>0.
\end{equation*}
Thus for any $C>0$, we can derive that 
\begin{align*}
	u(x_0)
	&\leq  -C\min\{\eta_0,1\}+\max_{\partial\Omega}\varphi.
\end{align*}
By letting $C\to+\infty$, we further obtain
\begin{equation*}
	u(x_0)\leq -\left(\lim_{C\to+\infty}C\right)\min\{\eta_0,1\}+\max_{\partial\Omega}\varphi=-\infty ,  
\end{equation*}
which contradicts the fact that $u\in C(\overline{\Omega})$. 
The proof of Theorem \ref{thm2} is now complete.

\section{Further discussions}\label{sec:discuss}
In the last section, we mainly discuss the intuitions behind why $-a-2b$ is a critical value of $\alpha$ that distinguishes between the existence and nonexistence of viscosity solutions to the problem \eqref{equation'}-\eqref{bcondition'}. 
In fact,  when $\alpha\in(-\infty,-a-2b]$, one can observe that the problem \eqref{equation'}-\eqref{bcondition'} never admits a viscosity solution $u$  
 even if we assume the smoothness of $\Omega$ and $\varphi$, i.e. assume that $\Omega$ is of $C^\infty$ and $\varphi\in C^\infty(\partial\Omega)$, which together with Lemma \ref{lemma:approximation} further yields $\psi_k\in   C^\infty(\overline{\Omega})$. 
 
Now we let $\alpha\in(-\infty,-a-2b]$. In what follows, we use the singularity order to describe the speed that approaches the singularity (i.e. tends to infinity). We are going to analyze the specific singularity order of $\lambda_{\min}(D^2u)$ and $\lambda_{\max}(D^2u)$ by using the method of undetermined coefficients. Intuitively, from \eqref{equation'}, we  assume the following relation for the norm of the gradient $Du$ near the boundary:
\begin{equation}\label{relation1}
|Du|\sim  \big(d(x)\big)^{-\widetilde{\theta}},\ \ \text{as $x\to\partial\Omega$}.
\end{equation}
By choosing some suitable coordinate system, we can assume that 
\begin{equation}\label{relation2}
	Du\sim\Big(\underbrace{0,\cdots,0}_{n-1},-\big(d(x)\big)^{-\widetilde{\theta}}\Big),
\end{equation}
which also implies
\begin{equation}\label{relation7}
	 \partial_{x_n}u\sim -\big(d(x)\big)^{-\widetilde{\theta}},\ \ \text{as $x\to\partial\Omega$}.
\end{equation}

By the analysis in \eqref{eq:eigen-1}-\eqref{eq:eigen-2}, 
we note that near the boundary, the second tangential derivative has the same singularity order as $\lambda_{\min}(D^2u)$, and the second normal derivative has the same singularity order as $\lambda_{\max}(D^2u)$. Now we have the following singularity order relations when the boundary is approached:
\begin{align*}
&D^2_{x'x'}u\sim\lambda_{\min}(D^2u),\ \ \text{as $x\to\partial\Omega$},\stepcounter{equation}\tag{\theequation}\label{relation3}\\
&\partial_{x_nx_n}u\sim \lambda_{\max}(D^2u),\ \ \text{as $x\to\partial\Omega$}.\stepcounter{equation}\tag{\theequation}\label{relation4}
\end{align*}

Fix a point $z_0\in\partial\Omega$. Up to suitable translations and rotations, we can assume that $z_0$ is the origin $\mathbf{0}$ and  
\begin{equation*}
\Omega\cap B_\rho(\mathbf{0})=\left\{(x',x_n)=(x_1,\cdots,x_n):\, x_n\geq \widetilde{\varphi}(x')=\sum_{i=1}^{n-1}\frac{\kappa_i}{2}x_i^2+o(|x'|^2)\right\},
\end{equation*}
where $\rho>0$ is a constant, and  $\widetilde{\varphi}:\mathbb{R}^{n-1}\to\mathbb{R}$ is a uniformly convex $C^2$ function such that
\begin{equation*}
\widetilde{\varphi}(0)=0,\ \ \ \ \nabla_{x'}\widetilde{\varphi}(0)=0,\ \ \ \ \text{and}\ \ \ \ D^2_{x'x'}\widetilde{\varphi}(0)=\operatorname{diag}\big(\kappa_1,\cdots,\kappa_{n-1}\big).
\end{equation*} 
Denote $$w=u-\psi_k.$$ By  \eqref{bcondition} and Lemma \ref{lemma:approximation}, we have $$w\big|_{\partial\Omega}=0.$$ 
 It follows that 
\begin{equation*}
D^2_{x'x'}w(\mathbf{0})=-\partial_{x_n}w(\mathbf{0})\operatorname{diag}\big(\kappa_1,\cdots,\kappa_{n-1}\big),
\end{equation*}
i.e.
\begin{equation*}
	D^2_{x'x'}u(\mathbf{0})-	D^2_{x'x'}\psi_k(\mathbf{0})=-\partial_{x_n}u(\mathbf{0})\operatorname{diag}\big(\kappa_1,\cdots,\kappa_{n-1}\big)+\partial_{x_n}\psi_k(\mathbf{0})\operatorname{diag}\big(\kappa_1,\cdots,\kappa_{n-1}\big),
\end{equation*}
that is,
\begin{equation*}
	D^2_{x'x'}u(\mathbf{0})=-\partial_{x_n}u(\mathbf{0})\operatorname{diag}\big(\kappa_1,\cdots,\kappa_{n-1}\big)+D^2_{x'x'}\psi_k(\mathbf{0})+\partial_{x_n}\psi_k(\mathbf{0})\operatorname{diag}\big(\kappa_1,\cdots,\kappa_{n-1}\big).
\end{equation*}
Due to the smoothness of $\psi_k$ over $\overline{\Omega}$, we note that both $D^2_{x'x'}\psi_k$ and $\partial_{x_n}\psi_k$ are bounded on the boundary. Hence the singularity order of $D^2_{x'x'}u$ on the boundary is primarily influenced  by  the singularity order of $\partial_{x_n}u$. Consequently, the second tangential derivative and the first normal derivative have the same singularity order near the boundary:
\begin{equation}\label{relation5}
	D^2_{x'x'}u\sim -\partial_{x_n}u,\ \ \text{as $x\to\partial\Omega$}.
\end{equation}

We recall that the gradient of the distance function $d(x)$ is precisely $N$, the unit inward normal to $\partial\Omega$. This implies that when $x$ is near the boundary point $\mathbf{0}\in\partial\Omega$, we have  $$\partial_{x_n} d(x)\sim|D d(x)| =1,\ \ \text{as $x\to\mathbf{0}\in\partial\Omega$}.$$ 
Using the chain rule leads us to 
\[\partial_{x_nx_n}u(x)=\partial_{d(x)}\partial_{x_n}u(x)\cdot \partial_{x_n} d(x)\sim \partial_{d(x)}\partial_{x_n}u(x),\ \ \text{as $x\to\mathbf{0}\in\partial\Omega$}.\]
Thus  we have 
\[\partial_{x_nx_n}u(x)\sim \partial_{d(x)}\partial_{x_n}u(x),\ \ \text{as $x\to\mathbf{0}\in\partial\Omega$}.\]
Now by \eqref{relation7}, we obtain that the second normal derivative has a singularity order that is one order greater than that of the first normal derivative near the boundary, that is,
\begin{equation}\label{relation8}
\partial_{x_nx_n}u\sim \big(d(x)\big)^{-\widetilde{\theta}-1},\ \ \text{as $x\to\partial\Omega$}.
\end{equation}
 
It follows from \eqref{relation7}, \eqref{relation3}, \eqref{relation4}, \eqref{relation5} and \eqref{relation8} that
\begin{align*}
&\lambda_{\min}(D^2u)\sim  \big(d(x)\big)^{-\widetilde{\theta}},\ \ \text{as $x\to\partial\Omega$},\\
&\lambda_{\max}(D^2u)\sim   \big(d(x)\big)^{-\widetilde{\theta}-1},\ \ \text{as $x\to\partial\Omega$}.
\end{align*}
By virtue of \eqref{equation'}, i.e. 
\[\Lambda_0\left(\lambda_{\min}(D^2u)\right)^{a}\left(\lambda_{\max}(D^2u)\right)^{b}=g(x)(d(x))^{\alpha},\]
we can derive from the exponent of $d(x)$ on both sides to get 
\begin{equation*}
	a\cdot (-\widetilde{\theta})+b\cdot (-\widetilde{\theta}-1)=\alpha.
\end{equation*}
Considering $\alpha\in(-\infty,-a-2b]$ now, we infer that  
\begin{equation}\label{theta}
	\widetilde{\theta}\geq 1.
\end{equation}

Up to now, in the case $\alpha\in(-\infty,-a-2b]$, for any boundary point, we can choose proper coordinate system and summarize what we have obtained in \eqref{relation1}-\eqref{theta} as the following   singularity order relations:
\begin{align*}  
	&\lambda_{\min}(D^2u)\sim D^2_{x'x'}u\sim   -\partial_{x_n}u\sim|Du|\sim   \big(d(x)\big)^{-\widetilde{\theta}},\ \ \text{as $x\to\partial\Omega$},\\ 
	&\lambda_{\max}(D^2u)\sim \partial_{x_nx_n}u\sim   \big(d(x)\big)^{-\widetilde{\theta}-1},\ \ \text{as $x\to\partial\Omega$},
\end{align*}
where $\widetilde{\theta}\geq 1$. 
In particular, when $\alpha=-a-2b$, we have the following  relations of singularity order:
\begin{align*}  
	&\lambda_{\min}(D^2u)\sim D^2_{x'x'}u\sim   -\partial_{x_n}u\sim|Du|\sim \big(d(x)\big)^{-1},\ \ \text{as $x\to\partial\Omega$},\\ 
	&\lambda_{\max}(D^2u)\sim \partial_{x_nx_n}u\sim  \big(d(x)\big)^{-2},\ \ \text{as $x\to\partial\Omega$}.
\end{align*}

Suppose there exists a viscosity solution $u$. By Definition \ref{def:vis}, we have $u\in C(\overline{\Omega})$.  Let   $z(t)$ for $0\leq t\leq\varepsilon_0$ be a curve segment in $\overline{\Omega}$ starting from the point $z_0\in\partial\Omega$  that 
follows the direction of the fastest decrease of $u(x)$. Hence $z(t)$ satisfies the following conditions:
\begin{align*}
&z(0)=z_0,\\
&z(t)\in\overline{\Omega},\ \ \forall 0\leq t\leq \varepsilon_0,\\
&\frac{d}{dt}u(z(t))=-|Du(z(t))|,\ \ \forall 0\leq t\leq \varepsilon_0.
\end{align*}
Without loss of generality, we can assume that $t$ represents the arc length parameter of the curve $z(t)$. It is clear that the distance from the point $z(t)$ to the boundary is no greater than the arc length from the point  $z(t)$ to the point $z(0)$ along the curve $z(t)$, which means
\[d(z(t))\leq t.\]
Then due to \eqref{theta},  we can derive 
\begin{align*}
u(z(\varepsilon_0))-u(z_0)&=u(z(\varepsilon_0))-u(z(0))=\int_0^{\varepsilon_0}\frac{d}{dt}u(z(t))dt=-\int_0^{\varepsilon_0}|Du(z(t))|dt\\
&\sim -\int_0^{\varepsilon_0}\big(d(z(t))\big)^{-\widetilde{\theta}}dt\leq -\int_0^{\varepsilon_0}t^{-\widetilde{\theta}}dt=-\infty,
\end{align*}
which implies that 
\[u(z(\varepsilon_0))\leq u(z_0)-\infty 
=\varphi(z_0)-\infty=-\infty.\]
This contradicts the fact that $u\in C(\overline{\Omega})$. Therefore  the problem \eqref{equation'}-\eqref{bcondition'} does not admit any viscosity solutions  when $\alpha\in(-\infty,-a-2b]$.


\begin{thebibliography}{99}


\bibitem{Caffarelli-Nirenberg-Spruck-I} Caffarelli, L.; Nirenberg, L.; Spruck, J. The Dirichlet problem for nonlinear second-order elliptic equations. I. Monge-Ampère equation.
Comm. Pure Appl. Math. 37 (1984), no. 3, 369-402.

\bibitem{Caffarelli-Nirenberg-Spruck-III} Caffarelli, L.; Nirenberg, L.; Spruck, J. The Dirichlet problem for nonlinear second-order elliptic equations.  III. Functions
of the eigenvalues of the Hessian. Acta Math. 155 (1985), no. 3-4, 261-301.


\bibitem{C}  Calabi, E.  
Improper affine hyperspheres of convex type and a generalization of a theorem by K. Jörgens.
Michigan Math. J. 5 (1958), 105-126.
	
	\bibitem{Cheng-Yau}
Cheng, Shiu Yuen; Yau, Shing Tung. 
On the regularity of the Monge-Amp\`ere equation $\det \frac{\partial^2u}{\partial x_i\partial x_j}=F(x,u)$.
Comm. Pure Appl. Math. 30 (1977), no. 1, 41-68. 
	
 
	
\bibitem{Crandall}	Crandall, Michael G. Viscosity solutions: a primer. Viscosity solutions and applications (Montecatini Terme, 1995), 1–43, Lecture Notes in Math., 1660, Fond. CIME/CIME Found. Subser., Springer, Berlin, 1997. 

\bibitem{D-Z} Delfour, Michel C.; Zolésio, Jean-Paul. 
Shape analysis via oriented distance functions. 
J. Funct. Anal. 123 (1994), no. 1, 129-201.

\bibitem{Figalli}  Figalli, Alessio. The Monge-Amp\`ere equation and its applications. Zurich Lectures in Advanced Mathematics. European Mathematical Society (EMS), Z\"urich, 2017. 

\bibitem{G-T} Gilbarg, David; Trudinger, Neil S. Elliptic partial differential equations of second order. Classics Math. Springer-Verlag, Berlin, 2001, 517 pp.

 

\bibitem{G-Huang} Gutiérrez, Cristian E.; Huang, Qingbo.  The refractor problem in reshaping light beams.  Arch. Ration. Mech. Anal. 193 (2009), no. 2, 423-443.

\bibitem{Guanbo} Guan, Bo. 	The Dirichlet problem for a class of fully nonlinear elliptic equations.	Comm. Partial Differential Equations 19 (1994), no. 3-4, 399–416.
 
\bibitem{Guan-Jian} Guan, Bo; Jian, Huai-Yu. The Monge-Ampère equation with infinite boundary value. 
Pacific J. Math. 216 (2004), no. 1, 77-94.

 




\bibitem{Ishii-1} Ishii, Hitoshi. Perron’s method for Hamilton-Jacobi equations, Duke Math. J. 55 (2) (1987), 369-384.

\bibitem{Ishii-2} Ishii, Hitoshi. 
On uniqueness and existence of viscosity solutions of fully nonlinear second-order elliptic PDEs. 
Comm. Pure Appl. Math. 42 (1989), no. 1, 15-45.

\bibitem{Jiang-Trudinger-Yang} Jiang, Feida; Trudinger, Neil S.; Yang, Xiao-Ping. On the Dirichlet problem for Monge-Ampère type equations. Calc. Var. Partial Differential Equations 49 (2014), no. 3-4, 1223-1236.

\bibitem{Jian} Jian, Huaiyu. Hessian equations with infinite Dirichlet boundary value. Indiana Univ. Math. J. 55 (2006), no. 3, 1045-1062.

\bibitem{Jian-Wang1} Jian, Huaiyu; Wang, Xu-Jia. Existence of entire solutions to the Monge-Ampère equation. Amer. J. Math. 136 (2014), no. 4, 1093-1106.

 

\bibitem{J} Jörgens, K. 
Über die Lösungen der Differentialgleichung $rt-s^2=1$. (In German).
Math. Ann. 127 (1954), 130–134.

 



\bibitem{Liyanyan} Li, Yan Yan. Some existence results for fully nonlinear elliptic equations of Monge-Ampère type. 
Comm. Pure Appl. Math. 43 (1990), no. 2, 233–271.

\bibitem{Li-Li-2022} Li, You; Li, Mengni. 
Boundary Hölder regularity for a class of fully nonlinear elliptic partial differential equations. 
Nonlinear Anal. 216 (2022), Paper No. 112681, 19 pp.

\bibitem{Li-Li-2025} Li, Mengni; Li, You. 
Existence, uniqueness and interior regularity of viscosity solutions for a class of Monge-Ampère type equations. 
J. Differential Equations 415 (2025), 202-234.

 

 

\bibitem{P} Pogorelov, A. V. The Minkowski multidimensional problem. Translated from the Russian by Vladimir Oliker. Introduction by Louis Nirenberg. Scripta Series in Mathematics. V. H. Winston \& Sons, Washington, D.C.; Halsted Press [John Wiley \& Sons], New York-Toronto-London, 1978. 

\bibitem{Yuanyu}Shankar, Ravi; Yuan, Yu. Hessian estimates for the sigma-2 equation in dimension four. 
Ann. of Math. (2) 201 (2025), no. 2, 489-513.

 

 


\bibitem{Urbas90} Urbas, John I. E. On the existence of nonclassical solutions for two classes of fully nonlinear elliptic equations. 
Indiana Univ. Math. J. 39 (1990), no. 2, 355-382.
	
	
 

\bibitem{Viaclovsky} Viaclovsky, Jeffrey Alan. 
Conformal geometry, contact geometry, and the calculus of variations. 
ProQuest LLC, Ann Arbor, MI, 1999, 67 pp.

\bibitem{Wang-Jiang}
Wang, Jiangwen; Jiang, Feida. Regularity of solutions for degenerate or singular fully nonlinear integro-differential equations. Commun. Contemp. Math.  (2025), Online Ready.
	
\bibitem{Wang-2009} Wang, Xu-Jia. 
The  $k$-Hessian equation. Lecture Notes in Mathematics (LNMCIME,volume 1977), 
Springer, Dordrecht, 2009, 177-252.

\bibitem{Yuan} Yuan, Yu. Special Lagrangian equations. 
Progr. Math., 333. 
Birkhäuser/Springer, Cham, 2020, 521-536.
\end{thebibliography}
\end{document}